\newtheorem{theorem}{Theorem}[section]
\newtheorem{proposition}[theorem]{Proposition}
\newtheorem{corollary}[theorem]{Corollary}
\newtheorem{conjecture}[theorem]{Conjecture}
\newtheorem{lemma}[theorem]{Lemma}
\newtheorem*{claim}{Claim}
\theoremstyle{definition}
 \newtheorem{definition}[theorem]{Definition}
\newtheorem{example}[theorem]{Example}
\newtheorem*{definition*}{Definition}
\theoremstyle{remark}
\newtheorem{remark}[theorem]{Remark}
\numberwithin{equation}{section}
\newcommand{\cU}{{\mathcal U}}
\def\RR{\mathbb{R}}
\renewcommand\SS{\mathbb{S}}
\def\HH{\mathbb{H}}
\newcommand{\cD}{{\mathcal D}}
\newcommand{\cG}{{\mathcal G}}
\newcommand{\cM}{{\mathcal M}}
\newcommand{\cS}{{\mathcal S}}
\newcommand{\cV}{{\mathcal V}}
\newcommand\minus\backslash
\newcommand\lan\langle
\newcommand\ran\rangle
\DeclareMathOperator\Div{div}
 \DeclareMathOperator\Int{int}
\newcommand\DD{\mathbb D}
\renewcommand\leq\leqslant
\renewcommand\geq\geqslant
\newlength{\intwidth}
\DeclareMathOperator\Imag{Im}
 \DeclareMathOperator\curl{curl}
\newcommand{\Rone}{\mathbb{R}}
\newcommand{\pp}[2]{\frac{\partial#1}{\partial#2}}
\author{Robert Cardona}
\address{Robert Cardona,
Departament de Matemàtiques i Informàtica, Universitat de Barcelona, Gran Via de les Corts Catalanes 585, 08007 Barcelona, Spain \newline \it{e-mail: robert.cardona@ub.edu}}
\author{Nathan Duignan}
\address{Nathan Duignan,
School of Mathematics and Statistics, University of Sydney, NSW 2050,
Australia \newline \it{e-mail: nathan.duignan@sydney.edu.au}}
\author{David Perrella}
\address{David Perrella,
The University of Western Australia, 35 Stirling Highway, Crawley WA 6009, Australia \newline \it{e-mail: david.perrella@uwa.edu.au}}
\thanks{RC acknowledges support from the Margarita Salas postdoctoral contract financed by the European Union-NextGenerationEU and its host institutions: the Universitat Politècnica de Catalunya and the Instituto de Ciencias Matemáticas. RC was partially supported by the AEI grant PID2019-103849GB-I00 / AEI / 10.13039/501100011033, the AGAUR grant 2021 SGR 00603, and the project Computational, dynamical and geometrical complexity in fluid dynamics - AYUDAS FUNDACIÓN BBVA A PROYECTOS INVESTIGACIÓN CIENTÍFICA 2021.}
\thanks{David Perrella would like to thank Daniel Peralta-Salas for providing financial support for his visit to the Instituto de Ciencias Matem{\'a}ticas in Madrid during which the idea for this paper came about. This paper was written while David Perrella received an Australian Government Research Training Program Scholarship at The University of Western Australia.}
\title{Asymmetry of MHD equilibria for generic adapted metrics}
\begin{document}

\begin{abstract}
Ideal magnetohydrodynamic (MHD) equilibria on a Riemannian 3-manifold satisfy the stationary Euler equations for ideal fluids. A stationary solution $X$ admits a large set of ``adapted" metrics in $M$ for which $X$ solves the corresponding MHD equilibrium equations with the same pressure function. We prove different versions of the following statement: an MHD equilibrium with non-constant pressure on a compact three-manifold with or without boundary admits no continuous Killing symmetries for an open and dense set of adapted metrics. This contrasts with the classical conjecture of Grad which loosely states that an MHD equilibrium on a toroidal Euclidean domain in $\RR^3$ with pressure function foliating the domain with nested toroidal surfaces must admit Euclidean symmetries.
\end{abstract}

\maketitle

\section{Introduction}

Ideal magneto-hydrodynamic equilibria on an oriented Riemannian 3-manifold $(M,g)$ possibly with boundary are described by solutions $(X,p)$ to the set of equations
\begin{equation}\label{eq:MHD}
\curl X \times X = \nabla p, \qquad \Div X = 0,
\end{equation}
where $X$ is called the magnetic field, $p$ the pressure function, and $\curl X$ the associated current density (up to units). If $\mu$ denotes the Riemannian volume-form of $(M,g)$, then for any vector fields $X$ and $Y$, the curl and cross products are uniquely defined by the equations
\begin{equation}\label{eq:curlandcrossprod}
i_{\curl X}\mu = d i_X g, \qquad i_{X \times Y} g = i_Yi_X\mu.
\end{equation}
In Equation \eqref{eq:curlandcrossprod}, $i$ denotes the interior product. Equation \eqref{eq:MHD} is equivalent to the Euler equations for a steady incompressible ideal fluid
\begin{equation}\label{steadyeulerfluid}
\nabla_X X + \nabla P = 0,  \quad \Div X = 0
\end{equation}
where $\nabla_X X$ denotes the covariant derivative of $X$ along itself. In particular, $(X,P)$ is a solution to Equation \eqref{steadyeulerfluid} if and only if $(X,p)$ solves Equation \eqref{eq:MHD} with $-p = \|X\|^2/2 + P$. In the fluid context, $p$ is known as the Bernoulli function.

A well-known and simple means of constructing solutions is under the assumption of axisymmetry. If $M$ is embedded in $\RR^3$ then a solution on $M$ is axisymmetric if there exists an axis in $\RR^3$ about which $X$, $p$, and $M$, have continuous rotational symmetry. Axisymmetric solutions to MHD equations are abundant and are constructed by solving an elliptic equation known as the Grad–Shafranov equation \cite{Ce}. Axisymmetric solutions may also be constructed with the desired property that the pressure function has toroidally nested level sets (see Definition \ref{def:toroial-lvl-sets} for a precise formulation of ``toroidally nested level sets"). 

Non-axisymmetric smooth solutions to Equation \eqref{eq:MHD} are also known to exist. For instance, Lortz in \cite{Lo} showed when $M$ is embedded in $\RR^3$ and has a reflection symmetry about a plane, then one can find MHD equilibria $(X,p)$ with $X$ and $p$ also possessing the reflection symmetry in a suitable sense with $X$ being tangent to the boundary. However, Lortz shows that the orbits of the MHD fields are all periodic in this situation and it appears unclear from the procedure that one may organise $p$ to have toroidally nested surfaces. In the setting of lower regularity, weak solutions $(X,p)$ to Equation \eqref{eq:MHD} with piece-wise constant pressure $p$, piece-wise smooth $X$, and tangential boundary conditions, are known to exist for non-axisymmetric domains. This was pioneered by Bruno and Laurence in \cite{BL} for small perturbations from axisymmetry and extended to a larger class of domains by Encisco, Luque, and Peralta-Salas in \cite{ELP}.

The lack of solutions to Equation \eqref{eq:MHD} with the pressure function having toroidally nested level sets led Grad in \cite{G} to make a loose conjecture about the nonexistence of solutions to Equation \eqref{eq:MHD} in contexts relevant to the design \cite{H} of magnetic confinement fusion devices. The conjecture loosely states, in the case of a solid torus $M$ embedded in $\RR^3$ (which we call a toroidal domain), that an MHD equilibrium $(X,p)$ on $M$ with $p$ having nested toroidal level sets, must possess either axisymmetry or a reflection symmetry. The authors in \cite{CDG2} summarise the full statement of the conjecture, including the situation of domains which are infinite cylinders. However, it is still an open problem \cite{ELP} to decide if there exists an MHD equilibrium $(X,p)$ on toroidal domain $M \subset \RR^3$ with the pressure function $p$ having toroidally nested level sets which is not axisymmetric. For this paper, we refer to this as Grad's conjecture (see Section \ref{sec:Grad conjecture} for a more precise discussion). 

Although Grad's conjecture remains unresolved, much is known about the necessary topology of MHD equilibria. Arnold's Structure Theorem \cite{A,AK} says that, when $M$ is compact and connected, the data in Equation \eqref{eq:MHD} is analytic, and $X$ and $\curl X$ are not everywhere colinear, then $M$ is, up to a set of measure zero, foliated by regular level sets of $p$ that are either cylinders or tori and $X$ is tangent to these level surfaces and diffeomorphic to a constant vector field on them. Arnold's proof also may be modified to hold for smooth $X$ and Morse-Bott $p$ \cite{Per}.

Besides the aforementioned examples of MHD equilibria on toroidal domains in $\RR^3$, examples can explicitly be constructed in the round three-sphere or in the flat three-torus, see for instance \cite{KKPS}. Furthermore, it was introduced in \cite{C} a method for constructing MHD equilibria with a Morse-Bott or analytic pressure function $p$ on any graph closed three-manifold with suitable adapted Riemannian metrics, providing a very large source of examples of such solutions. 

One can analyse the vector fields $X$ that satisfy Equation \eqref{eq:MHD} for some metric. In \cite{PRT} such vector fields are called Eulerizable, and a characterisation was given of non-vanishing Eulerizable flows on closed manifolds with vanishing first cohomology group. The characterisation is similar to that of Sullivan's homological characterisation of non-singular geodesible vector fields \cite{Sul}. An observation they make in \cite{PRT} which applies in general (that is, without assuming closedness of $M$ or that the first cohomology class of $M$ vanishes), is the following. For a vector field $X$, a 1-form $\alpha$, and a positively oriented volume form $\mu$ on $M$, consider the subset of the set $\cM(M)$ of all metrics on $M$ defined by
\begin{equation}\label{eq:adaptedsubspace}
\cM_{(X,\alpha,\mu)}(M) = \{g \in \cM(M) : i_X g = \alpha, \, \mu_g = \mu\}
\end{equation}
where, for $g \in \cM(M)$, $\mu_g$ denotes the Riemannian volume form on $(M,g)$. The elements $g \in \cM_{(X,\alpha,\mu)}(M)$ are said to be \emph{adapted to} $(X,\alpha,\mu)$. Supposing that
\begin{equation*}
L_X\mu = 0, \qquad i_Xd\alpha = -dp,
\end{equation*}
for some $p \in C^{\infty}(M)$, Equation \eqref{eq:curlandcrossprod} shows that $(X,p)$ is an MHD equilibrium for each metric $g$ adapted to $(X,\alpha,\mu)$. The notion of metric adapted to a one-form has its origins in contact geometry \cite{CH}, and was related to hydrodynamics in the seminal work of Etnyre and Ghrist \cite{EG}.

In this work, we use the formalism of adapted metrics to show that Grad's conjecture is radically false when generalised to the setting of smooth flows of isometries (as an analogy of axisymmetry) of general metrics on $M$. The nested structure of a solution may be interpreted as the pressure function $p$ being constant on the connected components of the boundary and regular almost everywhere, for instance, Morse-Bott or analytic (classical situations where Arnold's structure theorem applies). Even if the most interesting setting for Grad's conjecture is when $p$ has the aforementioned properties, it is enough in our theorems to assume that $p$ admits some regular level sets.

Assuming $M$ is compact, a smooth flow by isometries on $(M,g)$ is equivalent to the existence of a Killing field $K$ on $(M,g)$ which is tangent to $\partial M$, that is,
\begin{equation*}
K \cdot n = 0 \text{ on } \partial M,
\end{equation*}
where $n$ denotes the outward unit normal of $(M,g)$. Note that the tangency condition is vacuous when $\partial M = \emptyset$. Recall, a Killing field on $(M,g)$ is a vector field $K$ on $M$ satisfying
\begin{equation*}
L_K g = 0.
\end{equation*}

To state our main result, we will consider the set $\cM_{(X,\alpha,\mu)}(M)$ as defined in Equation \eqref{eq:adaptedsubspace} with the relative topologies induced by the (weak) $C^k$ topologies ($0 \leq k \leq \infty$) on $\cM(M)$. Throughout this paper, we use the weak $C^k$ topologies as opposed to the strong topologies for simplicity of exposition. Concerning generalised Grad's conjecture, we have the following result.

\begin{theorem}\label{thm:introalphafixed1}
Assume $M$ is compact and connected. Let $(X,p)$ be an MHD equilibrium on $(M,g_0)$ with pressure function $p$ such that $\nabla p \neq 0$ and $p$ is constant on $\partial M$. Set $\alpha = i_Xg_0$ and $\mu = \mu_{g_0}$, so that for any $g \in  \cM_{(X,\alpha,\mu)}(M)$, $(X,p)$ is an MHD equilibrium on $(M,g)$. Then, there is a $C^0$-open and $C^{\infty}$-dense subset $\cU \subset \cM_{(X,\alpha,\mu)}(M)$ such that, for each $g \in \cU$, there are no non-trivial Killing fields $K$ on $(M,g)$ satisfying $L_K X = 0$ or $K(p) = 0$.
\end{theorem}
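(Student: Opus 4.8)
The plan is to first collapse the two forbidden conditions, $L_K X = 0$ and $K(p)=0$, into a single one. Indeed, suppose $K$ is a Killing field tangent to $\partial M$ with $L_K X = 0$. Since $\alpha = i_X g$ we get $L_K\alpha = i_{[K,X]}g + i_X L_K g = 0$, hence $L_K d\alpha = 0$ and therefore $L_K dp = -L_K(i_X d\alpha)=0$. As $L_K dp = d(K(p))$, the function $K(p)$ is constant; integrating $K(p)\,\mu = L_K(p\mu) = d\,i_K(p\mu)$ over $M$ and using that $K$ is tangent to the $2$-dimensional boundary (so $i_K\mu$ vanishes on $\partial M$) yields $\int_M K(p)\,\mu = 0$, whence $K(p)\equiv 0$. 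Thus among Killing fields $\{K : L_K X = 0\}\subseteq\{K : K(p)=0\}$, and it suffices to find $g$ admitting no nonzero Killing field with $K(p)=0$.

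Next I would localize to a regular level surface. From $dp = -i_X d\alpha$ every zero of $X$ is a critical point of $p$, so on $\{dp\neq0\}$ the field $X$ is nonvanishing, and $X(p)=dp(X)=-d\alpha(X,X)=0$ shows $X$ is tangent to the level sets of $p$. Choosing a regular value $c$ not attained on $\partial M$, a connected component $\Sigma$ of $p^{-1}(c)$ is a closed orientable surface carrying the nonvanishing field $X$, hence a two-torus $\Sigma\cong\TT^2$. Any Killing field $K$ with $K(p)=0$ is tangent to $\Sigma$ and restricts to a Killing field of the induced metric $g_\Sigma$; since the zero set of a Killing field has even codimension, $K|_\Sigma\equiv 0$ would force $K\equiv0$ on the connected $M$. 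So it is enough to arrange that $(\Sigma,g_\Sigma)$ admits no nonzero Killing field.

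Here the defining constraints of $\cM_{(X,\alpha,\mu)}(M)$ are the key. For every adapted $g$ one has $g(X,\cdot)=\alpha$, so $\ker\alpha = X^{\perp_g}$ and $|X|_g^2=\alpha(X)$ are fixed, and fixing $\mu_g=\mu$ pins down the induced area on $\ker\alpha$; the only remaining freedom in $g$ is a field of area-normalized conformal structures on the plane field $\ker\alpha$. On $\Sigma$ this leaves exactly one free function: writing $W$ for a generator of the line $\ker\alpha\cap T\Sigma$, we have $g_\Sigma(X,W)=\alpha(W)=0$ with $|X|_g^2$ fixed, while $g_\Sigma(W,W)$ is free. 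A $C^\infty$-small, compactly supported perturbation of the transverse conformal structure near $\Sigma$ keeps $g$ adapted and lets us prescribe $g_\Sigma$ within an open family of torus metrics. Density then follows because a generic choice makes the differentials of the Gaussian curvature $\kappa$ and of $|\nabla\kappa|^2$ independent at a point, which no nonzero Killing field can tolerate; such choices are $C^\infty$-dense.

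The main obstacle I anticipate is the $C^0$-openness, which I expect to rest on the following rigidity: the set of metrics on $\TT^2$ admitting a nonzero Killing field is $C^0$-closed. A nonzero Killing field on $\TT^2$ cannot vanish (its zeros would be isolated of index $+1$, contradicting $\chi(\TT^2)=0$), so the identity component of the isometry group makes $g_\Sigma$ invariant under translation in some direction $v\in\RP^1$; since the directions range over the compact circle $\RP^1$, a $C^0$-limit of such translation-invariant metrics is again translation-invariant and again carries a Killing field. Consequently the $C^0$-continuous map $g\mapsto g_\Sigma$ pulls the $C^0$-open set of Killing-free torus metrics back to a $C^0$-open subset $\cU\subseteq\cM_{(X,\alpha,\mu)}(M)$, which by the previous steps has no nonzero Killing field with $K(p)=0$ and is $C^\infty$-dense. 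The delicate points to verify carefully are this closedness/compactness argument together with the structure statement that every Killing field on $\TT^2$ is an infinitesimal translation in suitable coordinates.
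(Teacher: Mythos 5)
Your reduction steps are essentially the paper's: pass to a closed regular level torus $\Sigma\subset\Int M$, note that a Killing field with $K(p)=0$ is tangent to $\Sigma$ and restricts to a Killing field of $g_\Sigma$, kill it there, and propagate $K|_\Sigma=0$ to $K\equiv 0$ by the codimension-one rigidity of Killing zero sets (your Stokes argument in place of the paper's critical-point argument is fine, but note it needs $K$ tangent to $\partial M$, which the theorem does not assume; this is repaired by observing that $K(p)$ is constant and vanishes at an interior critical point of $p$). The genuine gaps are the two steps you yourself flag as ``delicate'', and they are exactly where you diverge from the paper. First, $C^0$-openness: you reduce it to the claim that the set of metrics on $\TT^2$ admitting a nonzero Killing field is $C^0$-closed, and your justification --- translation directions range over the compact $\RP^1$, so a $C^0$-limit of translation-invariant metrics is translation-invariant --- does not work as stated: each $h_n$ is invariant under \emph{linear} translations only after conjugation by a diffeomorphism $\phi_n$ straightening its Killing flow, and nothing controls $\phi_n$ as $n\to\infty$. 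A correct proof of this closedness needs substantially heavier input (uniform convergence of distance functions under $C^0$-convergence of the tensors, Arzel\`a--Ascoli applied to the isometric circle actions, a uniform lower bound on orbit diameters to exclude collapse to the trivial limit action, and Myers--Steenrod to upgrade limiting distance isometries to smooth ones); none of this is in your sketch. Second, $C^\infty$-density: within the only family reachable by adapted perturbations, $g_\Sigma^{\rho}=E\,\omega^2+\rho\,G\,\eta^2$, you assert that a generic $\rho$ makes $d\kappa$ and $d|\nabla\kappa|^2$ independent at a point; this is plausible but is a real curvature computation in a constrained one-function family, and you do not supply it.

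The paper sidesteps both difficulties with one idea your proposal is missing: the companion field $Y$, defined by $i_Y\mu=\tfrac{1}{\alpha(X)}\,\alpha\wedge dp$, depends only on the data $(X,\alpha,\mu,p)$, hence is preserved by \emph{every} Killing field $K$ of an adapted metric with $K(p)=0$ (such $K$ automatically preserves $\alpha$, $\mu$ and $p$). So it suffices to make the single scalar $N(g)=g(Y,Y)|_\Sigma$ ``asymmetric'' --- concretely, to give it a strict maximum localized in a small disk, which by Poincar\'e--Hopf forces any nonvanishing symmetry to vanish somewhere, hence vanish identically. This is much weaker than making $(\Sigma,g_\Sigma)$ Killing-free, but it is all that is needed, and it turns both of your hard steps into easy ones: $N$ depends on $g$ pointwise, so $g\mapsto N(g)$ is $C^0$-continuous and the condition is $C^0$-open; and the adapted perturbation multiplies $N(g)$ exactly by $\rho|_\Sigma$, so density is immediate. (Making the intrinsic curvature of the surface metric generic is what the paper does only for its second theorem, where no invariance of $p$ is available --- and there it accordingly claims only $C^2$-openness, precisely because curvature costs two derivatives.)
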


An interesting feature of Theorem \ref{thm:introalphafixed1} (and Theorem \ref{thm:introalphafixed2} below) is that there is not much restriction on the topology of $p$ or the dynamics of $X$. In particular, one may take a familiar axisymmetric MHD equilibrium and perturb the metric to obtain metrics for which the conclusion of the theorem holds. We point out that in general there are metrics in the set $\cU$ that do admit Killing fields, even if the MHD solution does not (see Example \ref{eg:MHD-with-Killing-on-M} in Section \ref{sec:extensions}).

Theorem \ref{thm:introalphafixed1} is proven by showing that (1) the Killing field must preserve a scalar function on the regular toroidal level sets of $p$ which only depends on the metric in a $C^0$ way and (2) identifying a generic behavior of the scalar function which is incompatible with a non-vanishing Killing field and (3) that this behavior can be reached by perturbing in the class of adapted metrics. 

A natural question is whether a weaker form of Grad's conjecture for arbitrary metrics is also false. That is, whether there are no Killing fields on $M$, regardless of whether the Killing fields are symmetries of the given steady Euler flow. We will prove the following theorem regarding this.

\begin{theorem}\label{thm:introalphafixed2}
Assume $M$ is compact and $\partial M \neq \emptyset$. Let $(X,p)$ be an MHD equilibrium on $(M,g_0)$ with pressure function $p$ which is constant and regular on each of the connected components of $\partial M$. Set $\alpha = i_Xg_0$ and $\mu = \mu_{g_0}$, so that for any metric $g \in \cM_{(X,\alpha,\mu)}(M)$, $(X,p)$ is an MHD equilibrium on $(M,g)$. Then, there is a $C^2$-open and $C^{\infty}$-dense subset of $\cM_{(X,\alpha,\mu)}(M)$ which do not support non-trivial Killing fields $K$ tangent to $\partial M$.
\end{theorem}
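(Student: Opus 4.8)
The plan is to reduce everything to the geometry of the boundary surfaces and then to use the Gaussian curvature of the induced metrics as the ``scalar invariant'' playing the role that a $C^0$ invariant plays in the proof of Theorem~\ref{thm:introalphafixed1}; since Gaussian curvature depends on two derivatives of $g$, this is precisely what produces a $C^2$-open (rather than $C^0$-open) condition.

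\emph{Reduction to the boundary.} First I would record that $X$ is tangent to $\partial M$ for every adapted metric: from $\curl X\times X=\nabla p$ one gets $g(\nabla p,X)=0$, and since $p$ is constant on each component of $\partial M$ the gradient $\nabla p$ is normal there, so $X\cdot n=0$. Next, if $K$ is a Killing field tangent to $\partial M$, its flow consists of isometries preserving each boundary component $\Sigma$ setwise, so $K|_{\Sigma}$ is a Killing field of the induced metric $g|_{\Sigma}$ on the closed surface $\Sigma$. The key reduction is that $K\not\equiv 0$ forces $K|_{\Sigma}\not\equiv 0$ on every component: if $K\equiv 0$ on a component then $K=0$ and, using Killing's equation together with $g(K,n)=0$ along $\Sigma$, also $\nabla K=0$ there, and a Killing field with vanishing $1$-jet along a hypersurface vanishes identically. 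Hence it suffices to arrange that no boundary component carries a nontrivial Killing field for its induced metric.

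\emph{The curvature obstruction.} On a boundary surface $\Sigma$, the restricted field $K|_{\Sigma}$ preserves the Gaussian curvature $\kappa$ of $g|_{\Sigma}$. I claim that if $\kappa$ is a Morse function with at least one saddle, then $K|_{\Sigma}\equiv 0$. Indeed, the flow of $K|_{\Sigma}$ permutes the finite critical set of the Morse function $\kappa$ and hence fixes each critical point, so $K|_{\Sigma}$ vanishes at every critical point of $\kappa$. At a saddle $q$, the linearization $\nabla K|_{\Sigma}(q)$ is a skew-symmetric endomorphism of $T_q\Sigma$ that must preserve the indefinite nondegenerate form $\mathrm{Hess}_q\kappa$; since the only rotation preserving an indefinite quadratic form is the identity, $\nabla K|_{\Sigma}(q)=0$. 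As a surface Killing field is determined by its $1$-jet at a point, $K|_{\Sigma}\equiv 0$, a contradiction. (On $T^2$ and on surfaces of genus $\ge 2$ a Morse $\kappa$ automatically has saddles; only on $S^2$ must their presence be imposed, which is a generic condition.)

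\emph{Density.} It remains to make $\kappa$ Morse with a saddle on each boundary component by a $C^{\infty}$-small perturbation inside $\cM_{(X,\alpha,\mu)}(M)$. Write a competitor as $g+h$ with $h$ symmetric, $i_Xh=0$ (to preserve $\alpha=i_Xg$) and $\mu_{g+h}=\mu$ (to preserve the volume). At points of $\Sigma$ where $X\ne 0$, choosing a frame $(X,W)$ of $T\Sigma$ with $W\in X^{\perp}\cap T\Sigma$, the constraints fix $g|_{\Sigma}(X,X)=\alpha(X)$ and $g|_{\Sigma}(X,W)=\alpha(W)$, leaving the single component $\lambda:=g|_{\Sigma}(W,W)$ free. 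Prescribing the $4$-jet of $\lambda$ at an interior point $q$ of a disk where $X\ne 0$, supporting the perturbation in a collar of $\partial M$ (adjusting a transverse component to meet the volume constraint and cutting off away from the boundary), I can arrange $d\kappa(q)=0$ with $\mathrm{Hess}_q\kappa$ indefinite, and then make $\kappa$ globally Morse by a further generic perturbation. Verifying that this \emph{single} free function suffices to drive $\kappa$ into Morse-with-a-saddle — i.e.\ the realizability of the curvature condition through $\lambda$ alone — is the main obstacle of the proof.

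\emph{Openness.} Finally, ``admitting a nontrivial Killing field tangent to $\partial M$'' is $C^2$-closed, which yields $C^2$-openness of the complement. If $g_n\to g$ in $C^2$ with Killing fields $K_n$ normalized by $\sup(|K_n|+|\nabla K_n|)=1$, then the prolongation of Killing's equation (schematically $\nabla^2 K=-\mathrm{Riem}(\cdot,\cdot)K$) bounds $K_n$ in $C^2$ uniformly, because the curvatures of $g_n$ are $C^0$-bounded; by Arzel\`a--Ascoli a subsequence converges in $C^1$ to a Killing field $K$ of $g$ with $\sup(|K|+|\nabla K|)=1$, hence nontrivial, and the tangency $g_n(K_n,n_n)=0$ passes to the limit. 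Combining the obstruction, the density construction, and this openness produces the desired $C^2$-open, $C^{\infty}$-dense subset.
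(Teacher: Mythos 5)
Your reduction to the boundary, your saddle-linearization obstruction, and your compactness proof of $C^2$-openness are all correct, and the latter two are genuinely different from the paper's route: the paper obstructs Killing fields by combining Poincar\'e--Hopf applied to an invariant function with a dominating local maximum (Lemma \ref{lem:generic-non-symmetry}) with the fact that a Killing field on a surface with $H^1_{\mathrm{dR}}\neq\{0\}$ which vanishes at one point vanishes identically (Lemma \ref{lem:vanishinginsurface}), and it gets openness for free from the $C^2\to C^0$ continuity of the induced-scalar-curvature map. But your proposal has a genuine gap, and you name it yourself: the density step. Everything hinges on showing that the Gaussian curvature of the induced boundary metric can be driven into your target class (globally Morse with a saddle) by perturbations that stay inside $\cM_{(X,\alpha,\mu)}(M)$. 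The constraints $i_Xg=\alpha$ and $\mu_g=\mu$ leave essentially one free scalar function on each boundary component ($\lambda=g|_\Sigma(W,W)$, with the volume constraint absorbed into the normal direction), the curvature depends on this function through a second-order nonlinear operator, and you must realize a \emph{global} condition with it; in particular, the ``further generic perturbation'' you invoke to achieve global Morseness must itself be implemented through this single constrained function, not by an arbitrary perturbation of $\kappa$. Nothing in the proposal establishes this realizability, so the argument is incomplete precisely at its load-bearing step.

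This is exactly the difficulty the paper engineers away. Its genericity target is far weaker than Morse: the set $\cV$ of Lemma \ref{lem:generic-non-symmetry} consists of functions having a point whose value dominates all values outside a proper coordinate disk, a condition reachable by a purely \emph{local} perturbation at a maximum point. Concretely, the paper perturbs $g\mapsto g^{\rho}$ by scaling $g(Y,Y)$ by $\rho$ and $g(\nabla p,\nabla p)$ by $1/\rho$, where $Y$ is the companion field of the guided flow; in the flow-box coordinates $(u,v)$ furnished by Proposition \ref{prop:companionflow} the induced boundary metric is diagonal, $E\,du^2+G\,dv^2$, and for $\tilde{\rho}=1+\rho_0u^2$ the Gauss formula yields the exact change
\begin{equation*}
s(g^{\rho})(x)-s(g)(x)=\frac{-2c}{\tilde{E}\tilde{G}|_0}>0 \quad (c<0)
\end{equation*}
at a maximum point $x$ of $s(g)$, which lands $s(g^{\rho})$ in $\cV$. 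If you want to salvage your route, note that global Morseness is unnecessary: one \emph{isolated} nondegenerate saddle suffices, since flow-invariance of the critical set already fixes any isolated critical point, and your linearization argument then kills the $1$-jet of $K|_{\Sigma}$ there. That would reduce your missing step to a local jet computation for the curvature of $E\,du^2+\rho G\,dv^2$ in the paper's coordinates --- a computation of the same nature as the paper's, but one you would still have to carry out.
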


Theorem \ref{thm:introalphafixed2} is proven similarly to Theorem \ref{thm:introalphafixed1}. The $C^2$-openness property comes about because we prove the result via perturbing the intrinsic scalar curvature on $\partial M$, inherited from $M$. It is natural to ask whether we can remove the boundary condition of the Killing field in Theorem \ref{thm:introalphafixed2}. Without this restriction, the geometry of Killing fields on $M$ as a whole must be considered in more detail. Nevertheless, these results show there exists a large class of metrics for which a generalised Grad's conjecture is false.

We will obtain Theorem \ref{thm:introalphafixed1} and \ref{thm:introalphafixed2} as corollaries of a more general result, namely Proposition \ref{prop:for-intro-alpha-fixed-1-and-2}. In particular, Proposition \ref{prop:for-intro-alpha-fixed-1-and-2} allows, for example, MHD equilibria with $\nabla p = 0$ which are known (here) as Beltrami fields. Explicitly, a Beltrami field in this paper refers to a vector field $X$ satisfying
\begin{equation*}
\Div X = 0, \qquad \curl X \times X = 0
\end{equation*}
on $(M,g)$. For one such result about Beltrami fields, see Corollary \ref{cor:beltrami-fields}. 

Lastly, it should be noted that Grad \cite{G} discussed the possibility that perhaps there do exist MHD equilibria on a toroidal domain with nested pressure surfaces which does not admit axisymmetry nor reflection symmetry but that such examples would be isolated in the sense that there is no curve (or family) of examples which pass through the given example (see also \cite{CDG2}). Grad's thoughts must be charitably interpreted here because one can do trivial things to create a family of examples, given a singular example, that a physicist would discount: such as rotating or applying a scaling to the example. For the interested reader, we provide some examples (see Example \ref{ex:families} in Section \ref{sec:extensions}) of a less trivial family with no symmetries (for some metrics that are adapted to the whole family) by applying our main results.\\

The structure of the paper is as follows. In Section \ref{sec:Grad conjecture}, we provide the reader with a mathematically precise discussion of Grad's conjecture focusing on the case of solid toroidal domains. In Section \ref{sec:pressure-and-Killing-fields}, we establish some geometric relationships between Killing fields and the pressure function of an MHD equilibrium. In Section \ref{sec:guided-flows}, we demonstrate that a class of volume-preserving vector fields with first integrals admit canonical symmetries. These symmetries are leveraged to prove Theorem \ref{thm:introalphafixed1}. In Section \ref{sec:perturbing-adapted-metrics}, a means of perturbing metrics in the class of adapted metrics is devised for the proof of Theorems \ref{thm:introalphafixed1} and \ref{thm:introalphafixed2}. In Section \ref{sec:extensions}, we apply the aforementioned generalisation of the main results to state some extensions of the main results and provide some examples related to plasma physics. In Section \ref{sec:discuss}, we discuss the results obtained in the paper in relationship how they may provide insight into resolving the classical conjecture of Grad.

\section{Some background on Grad's conjecture}\label{sec:Grad conjecture}

The purpose of this section is to precisely state the version of Grad's conjecture we focus on in this paper. First, the notion of a solid toroidal domain and the property of axisymmetry is defined, before a precise statement of Grad's conjecture is given in Conjecture \ref{conj:Grad}.

Let $M \subset \RR^3$ be a compact regular domain and $p \in C^{\infty}(M)$ be constant on the boundary. Then, any regular level set of $p$ is a closed (and orientable) 2-dimensional submanifold of $M$ (and thus of $\RR^3$). Moreover, for any closed connected 2-dimensional submanifold $S$ in $\RR^3$, by the Jordan-Brower Separation Theorem (see \cite{GP,Lima}), there exists a unique compact regular domain $R \subset \RR^3$ such that $S = \partial R$; we henceforth denote this domain by $R_S$. We make the following definition.

\begin{definition}\label{def:toroial-lvl-sets}
Define a \emph{solid toroidal domain} $M$ as a regular domain in $\RR^3$ diffeomorphic to $\DD^2 \times \SS^1$ where $\DD^2 \subset \RR^2$ is the closed unit disk and $\SS^1$ is the unit circle. If $M$ is a solid toroidal domain and $\gamma \subset \Int M$ is a 1-dimensional compact embedded submanifold, then a function $p \in C^{\infty}(M)$ is said to have \emph{toroidally nested level sets with axis $\gamma$} if each level set $S$ of $p$ is either
\begin{enumerate}
    \item a regular level set diffeomorphic to $\SS^1\times\SS^1$, or
    \item equal to $\gamma$,
\end{enumerate}
and moreover, for any two regular level sets $S_1,S_2$ of $p$, either $R_{S_1} \subset R_{S_2}$ or $R_{S_2} \subset R_{S_1}$. Lastly, for any regular level set $S$ of $p$, $\gamma \subset \Int R_S$ where $\gamma$ is the unique non-regular level set of $p$.
\end{definition}

The statement of Definition \ref{def:toroial-lvl-sets} contains some intentional redundancies which are intended to convey the geometric picture of nested toroidal level sets. The following elementary proposition removes some of the redundancies in the definition.

\begin{proposition}\label{prop:reducing-definition}
Let $M \subset \RR^3$ be a toroidal domain. Suppose that a function $p \in C^{\infty}(M)$ is constant on the boundary and the critical set $\gamma = \{x \in M : \nabla p|_x = 0\}$ is a connected embedded 1-dimensional submanifold in $\Int M$. Then, $p$ has toroidally nested level sets with axis $\gamma$.
\end{proposition}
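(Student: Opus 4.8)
The plan is to extract from the single hypothesis — that the full critical set $\gamma=\{\nabla p=0\}$ is a connected embedded circle in $\Int M$ — enough global Morse-theoretic information to read off the level-set structure directly, \emph{without} assuming $p$ is Morse or Morse–Bott. First I would record the easy preliminaries: since $\gamma$ is a compact connected $1$-manifold without boundary it is diffeomorphic to $\SS^1$, and since $\nabla p\equiv 0$ on the connected set $\gamma$ the function is constant there, say $p|_\gamma\equiv p_0$; write $p_1$ for the (constant) boundary value. The function $p$ is non-constant (otherwise $\gamma=M$, which is impossible for a $1$-dimensional $\gamma$), so it attains distinct maximum and minimum on the compact $M$.

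The crucial first reduction is to locate the critical value. I would argue that $\{\max_M p,\min_M p\}=\{p_0,p_1\}$: an interior extremum forces $\nabla p=0$, hence lies on $\gamma$ and has value $p_0$, while a boundary extremum has value $p_1$; since the max and min differ but each lies in the two-element set $\{p_0,p_1\}$, the extreme values must be exactly $p_0$ and $p_1$, and in particular $p_0\ne p_1$. Replacing $p$ by $-p$ if necessary (which changes neither $\gamma$, the level sets, nor the regions $R_S$), I may assume $p_1<p_0$. It then follows that $p^{-1}(p_0)=\gamma$ (an interior point attaining the maximum is critical hence on $\gamma$, and the boundary sits at $p_1$), that $p^{-1}(p_1)=\partial M$ (no interior point attains the minimum, since interior critical points have value $p_0\neq p_1$), and that for every $c\in(p_1,p_0)$ the level set $p^{-1}(c)$ meets neither $\gamma$ nor $\partial M$, so it is a closed regular level surface lying in $\Int M$. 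Thus $p_0$ is the unique non-regular value and $p(M)=[p_1,p_0]$.

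Next I would pin down the topology of the regular level sets. For any $c\in(p_1,p_0)$ the region $p^{-1}([p_1,c])$ is a compact cobordism between $\partial M$ and $p^{-1}(c)$ containing no critical points of $p$; flowing along the normalized gradient $\nabla p/|\nabla p|^2$ (smooth and bounded there, as $\gamma$ sits at the strictly larger value $p_0$) yields a diffeomorphism $p^{-1}([p_1,c])\cong\partial M\times[p_1,c]$, so in particular $p^{-1}(c)\cong\partial M\cong\SS^1\times\SS^1$. This is exactly the step that uses the toroidal hypothesis $M\cong\DD^2\times\SS^1$, and I expect it to be the main technical point: one must check that the cobordism has no side boundary — its frontier is precisely the two disjoint level tori $\partial M$ and $p^{-1}(c)$ — so that the standard product theorem applies. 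In particular every regular level set is a torus and is connected, so the Jordan–Brouwer theorem applies to each.

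Finally I would establish the nesting and the axis condition. For regular $c$ the set $p^{-1}([c,p_0])=\{p\ge c\}$ is a compact regular domain in $\RR^3$ whose boundary is exactly the torus $p^{-1}(c)$ (its interior $\{p>c\}$ lies in $\Int M$ because the boundary sits at $p_1<c$); by the uniqueness clause of the Jordan–Brouwer theorem this domain must be $R_{p^{-1}(c)}$. Since it contains $\gamma=p^{-1}(p_0)$ in its interior, the condition $\gamma\subset\Int R_S$ holds, and for $c_1<c_2$ the evident inclusion $p^{-1}([c_2,p_0])\subset p^{-1}([c_1,p_0])$ gives $R_{S_2}\subset R_{S_1}$, which is the required nesting. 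Collecting these facts — every level set is either a regular torus (values in $[p_1,p_0]$, including $\partial M$ itself, which is the regular level at $p_1$) or equals $\gamma$ (the value $p_0$), together with nesting and the axis condition — verifies all the clauses of Definition \ref{def:toroial-lvl-sets}.
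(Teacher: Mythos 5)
Your proof is correct, and while it follows the same skeleton as the paper's argument (locate the extreme values, show the regular level sets are tori, then establish nesting), the two main steps are carried out by genuinely different mechanisms. The opening step is identical in both: the extreme values of $p$ are exactly $\{p_0,p_1\}$, attained only on $\gamma$ and $\partial M$ respectively, so $p_0$ is the unique critical value; your normalization $p_1<p_0$ is the mirror image of the paper's $p_0<p_1$, which is immaterial. For the topology of regular level sets, the paper shows that for each closed surface $S$ the set $\cD_S$ of points of $M\setminus\gamma$ whose level set is diffeomorphic to $S$ is open (via the Flowout and Boundary Flowout Theorems) and concludes from connectedness of $M\setminus\gamma$ together with $\partial M\subset\cD_{\SS^1\times\SS^1}$; you instead build a global product structure $p^{-1}([p_1,c])\cong\partial M\times[p_1,c]$ by flowing the normalized gradient off $\partial M$. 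Both hinge on the same ingredients (no critical points below $p_0$, flowout theorems), but your version is more constructive and makes the connectedness of each regular level explicit, which you then need in order to quote Jordan--Brouwer. The nesting step is where the routes truly diverge: the paper first proves $\gamma\subset\Int R_S$ by observing that $p|_{R_S}$ is constant on $\partial R_S=S$ and hence has an interior critical point, invoking connectedness of $\gamma$ a second time, and only afterwards identifies $R_S$ with a sublevel set; you reverse the logic, noting that the superlevel set $\{p\geq c\}$ is itself a compact regular domain in $\RR^3$ whose boundary is exactly the torus $p^{-1}(c)$, so the uniqueness clause of Jordan--Brouwer (stated just before Definition \ref{def:toroial-lvl-sets}) forces $\{p\geq c\}=R_{p^{-1}(c)}$, after which the nesting and the axis condition $\gamma\subset\Int R_S$ drop out simultaneously. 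Your route is slightly leaner---connectedness of $\gamma$ is used only once, to make $p$ constant on $\gamma$, and the paper's critical-point-inside-$R_S$ argument is replaced by a single appeal to uniqueness---at the cost of the one technical verification you correctly flag, namely that $\{p\geq c\}$ avoids $\partial M$ and has no boundary points other than $p^{-1}(c)$.
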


\begin{proof}
First, note that the smooth function $p|_\gamma$ satisfies $d (p|_{\gamma}) = 0$ on $\gamma$ and because $\gamma$ is connected, $p$ is constant on $\gamma$. Because $p$ is a constant $p_0$ on the critical set of $M$ and a constant $p_1$ on $\partial M$, it follows that $p_0 \neq p_1$ and that one constant is the minimum of $p$ and the other is the maximum, both of which are not attained anywhere else. Without loss of generality, we assume that $p_0 < p_1$ so that $p_0$ is the minimum and $p_1$ is the maximum. 

As a standard application of the Flowout Theorem and Boundary Flowout Theorem \cite[Thm.~9.20, Thm.~9.24]{L} for a given closed 2-manifold $S$, the set
\begin{equation*}
\cD_S = \{x \in M \backslash \gamma : p^{-1}(p(x)) \text{ is diffeomorphic to $S$}\}   
\end{equation*}
is open in $M \backslash \gamma$. Because $M$ is connected, and $\gamma$ is compact, it is standard that $M \backslash \gamma$ is connected. Hence, because $\partial M$ is a 2-torus it follows from a simple connectedness argument that $\cD_{\SS^1\times\SS^1} = M \backslash \gamma$. Hence, every regular level set is a torus. 

Now, observe that $M = R_{\partial M}$ and if $S,S'$ are closed connected 2-dimensional submanifold $S$ in $\RR^3$, then a simple connectedness argument shows that if $S \subset R_{S'}$, then $R_S \subset R_{S'}$. With this, let $S$ be a regular level set of $p$. Then, $S \subset M$ so that by the above, $R_S \subset M$. Then, we may consider the restriction $p|_{R_S}$ which is constant on $\partial R_S = S$ and thus has a critical point in $\Int R_S$. Hence, $\gamma \cap \Int R_S \neq \emptyset$. On the other hand, $\gamma \cap S = \emptyset$ and thus $\gamma \cap (M \backslash \Int R_S)$ is open in $\gamma$. By connectedness of $\gamma$, we must have that $\gamma \subset \Int R_S$. In particular, denoting the constant value of $p$ on $S$ by $p_S$, this shows that
\begin{equation*}
R_S = p^{-1}((-\infty,p_S]).    
\end{equation*}
Thus, if $S_1$ and $S_2$ are regular level sets of $p$, with $p_{S_1} \leq p_{S_2}$, then $R_{S_1} \subset R_{S_2}$. From this, it is clear that $p$ has global toroidally nested level sets.
\end{proof}

For completeness, we formally define axisymmetry.

\begin{definition}
We say that an MHD equilibrium $(X,p)$ on a toroidal domain $M$ is \emph{axisymmetric} if there exists an infinitesimal generator $R$ of rotation about some axis in $\RR^3$ such that $R$ is tangent to $\partial M$ (so that the rotation preserves $M$) and the induced vector field $K$ on $M$ is a Killing symmetry of $X$ and $p$:
\begin{equation*}
L_{K} X = 0, \qquad L_{K} p = 0. 
\end{equation*}

\end{definition}

\begin{remark}
If $(X,p)$ is an MHD equilibrium on a toroidal domain $M$ and $K$ denotes a Killing field on $M$, if $p$ has toroidally nested level sets, then it follows from Lemma \ref{lem:tangenttobernoulli} and Proposition \ref{prop:KcommX} (see Section \ref{sec:pressure-and-Killing-fields}) that
\begin{equation*}
    L_K X = 0 \iff L_K p = 0.
\end{equation*}
\end{remark}

We now state a precise Grad's conjecture.

\begin{conjecture}\label{conj:Grad}
Let $M \subset \RR^3$ be a toroidal domain. Suppose there exists an MHD equilibrium $(X,p)$ on $M$ where $p$ has toroidally nested level sets. Then, the equilibrium is axisymmetric.
\end{conjecture}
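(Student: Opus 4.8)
The plan is to exploit the strong geometric constraints that the nested-torus hypothesis places on the pair $(X,\curl X)$ and then attempt to manufacture a rotational Killing field out of the resulting flux structure. First I would record the two scalar identities that follow immediately from \eqref{eq:MHD}: taking the inner product of $\nabla p = \curl X \times X$ with $X$ and with $\curl X$ gives $X(p) = 0$ and $(\curl X)(p) = 0$, so both $X$ and the current $\curl X$ are everywhere tangent to the level sets of $p$. By Proposition \ref{prop:reducing-definition} those regular level sets foliate $M \setminus \gamma$ by nested $2$-tori, so $X$ and $\curl X$ are tangent to each torus $T_\psi$ and generically span it. Choosing a global flux function $\psi$ with $p = p(\psi)$, one then obtains standard magnetic coordinates in which $X$ is encoded by the poloidal and toroidal flux functions together with a rotational transform $\iota(\psi)$, and by Arnold's structure theorem (applicable under the present nested-torus hypothesis) the restriction $X|_{T_\psi}$ is smoothly conjugate to a linear flow.

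The core of the argument would be to show that the only way such a foliation can be realised by an equilibrium in the flat metric of $\RR^3$ is as a surface of revolution, thereby producing the axis and the infinitesimal generator $R$ of rotation. Concretely, I would write the equilibrium in a generalised Grad--Shafranov form adapted to the ambient Euclidean structure, so that the equation couples the geometry of the embedding $T_\psi \hookrightarrow \RR^3$ to $p'(\psi)$ and to the two flux functions. The goal is to extract from this coupling a single geometric invariant of the foliation---most naturally the rotational transform together with the induced metric data on each $T_\psi$---and prove that its compatibility with the flat metric is rigid enough to force each $T_\psi$ to be invariant under a common one-parameter group of Euclidean rotations. Having produced the generator $R$ tangent to $\partial M$, one would then verify $L_K X = 0$ and $L_K p = 0$ for the induced field $K$; by the remark following Conjecture \ref{conj:Grad} it in fact suffices to check $L_K p = 0$, i.e.\ that $K$ preserves the foliation, which is automatic once $R$ is constructed as a symmetry of the flux surfaces.

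The hard part---and the reason this remains open---is precisely the rigidity step: nothing in the equilibrium system obviously records the Euclidean metric in a way that can be converted into a symmetry, and indeed the main theorems of this very paper show that for an open dense set of \emph{adapted} metrics the same foliation supports \emph{no} Killing field at all. Thus the flat metric must be used in an essential, non-generic way, and I know of no mechanism that isolates it. A plausible reduction is to show that some scalar built from the second fundamental form of the $T_\psi$ and the flux functions must be constant along the toroidal orbits, and then to invoke a rigidity theorem for foliations of $\RR^3$ by nested tori carrying integrable equilibrium flows; but both the identification of the correct invariant and the rigidity statement it would feed are, at present, exactly the missing ingredients. Consequently this is an attack strategy rather than a proof: the overdeterminacy of three-dimensional magnetohydrostatics with nested surfaces is strongly suggestive, yet turning that overdeterminacy into the specific conclusion of Euclidean symmetry is the open problem.
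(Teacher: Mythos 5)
The statement you were asked to prove is Conjecture \ref{conj:Grad}, which the paper explicitly records as an open problem and does not prove; indeed the paper's main contribution is to show that the conjecture fails generically once the Euclidean metric is replaced by abstract adapted metrics (Theorems \ref{thm:introalphafixed1} and \ref{thm:introalphafixed2}). So there is no proof in the paper to compare against, and your proposal --- as you candidly state in your final paragraph --- is an attack strategy rather than a proof. The preliminary material is sound and consistent with the paper's toolkit: pairing Equation \eqref{eq:MHD} with $X$ and with $\curl X$ gives $X(p) = (\curl X)(p) = 0$; Proposition \ref{prop:reducing-definition} supplies the nested toroidal foliation; on each regular torus $\nabla p = \curl X \times X \neq 0$ even forces $X$ and $\curl X$ to be pointwise linearly independent, so the Arnold-type linearisation you invoke is legitimate there; and the reduction of the final verification to $L_K p = 0$ via the equivalence $L_K X = 0 \iff L_K p = 0$ (Lemma \ref{lem:tangenttobernoulli} and Proposition \ref{prop:KcommX}) is correct.

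The genuine gap is exactly the one you name: the rigidity step. The sentence ``prove that compatibility with the flat metric is rigid enough to force each $T_\psi$ to be invariant under a common one-parameter group of Euclidean rotations'' is a restatement of the conjecture, not an argument --- no concrete invariant of the embedding is produced, and no rigidity theorem for such foliations is identified or known. Moreover, the paper's results sharpen your own caveat into a precise obstruction: for a $C^0$-open and $C^\infty$-dense set of metrics adapted to the same data $(X, \alpha, \mu)$, the identical pair $(X,p)$ with the identical nested foliation admits no Killing symmetries whatsoever. Hence no quantity built solely from the guided-flow data --- the rotational transform, the flux functions, the companion field of Theorem \ref{thm:prev-paper}, or the intrinsic metric coefficients on the tori --- can drive the rigidity step, since all of these persist across adapted metrics for which the conclusion is false. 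Any successful argument must exploit the Euclidean metric (e.g.\ through extrinsic curvature of the embedded tori, as you suggest) in an essentially non-generic way, and that mechanism is precisely the missing ingredient. Your diagnosis of the difficulty is accurate, but the proposal leaves the conjecture exactly as open as the paper does.
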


\section{On the pressure function and Killing fields}\label{sec:pressure-and-Killing-fields}

In this section, we prove some important relationships between the Killing fields of a Riemannian 3-manifold $(M,g)$ and MHD equilibria $(X,p)$ on $(M,g)$. 

\begin{remark}\label{rem:killing-on-adapted-metric}
Let $g$ be a metric on $M$ adapted to $(X,\alpha,\mu)$ for some vector field $X$, 1-form $\alpha$, and volume form $\mu$. If $L_K X = 0$, where $K$ is a Killing field on $(M,g)$, then
\begin{equation*}
L_K \alpha = L_K i_X g = i_{L_K X} g + i_X L_K g = 0+0 = 0    
\end{equation*}
as well as
\begin{equation*}
L_K \mu = L_K \mu_g = 0.    
\end{equation*}
\end{remark}

The first property that we establish is that, under mild assumptions, the Killing fields that are symmetries of $X$ are necessarily tangent to the level sets of the pressure function $p$. More precisely, the following lemma holds.

\begin{lemma}\label{lem:tangenttobernoulli}
Let $(X,p)$ be an MHD equilibrium on $(M,g)$ and $K$ be a Killing field such that $L_K X = 0$. Then $L_K \alpha = 0$, where $\alpha = i_X g$, and $dL_Kp = L_Kdp = 0$. In particular, if $M$ is connected and $p$ has a critical point, then $L_K p = 0$.
\end{lemma}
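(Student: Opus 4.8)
The plan is to run everything through Cartan calculus, the only genuinely content-bearing step being the translation of the MHD equation into the language of the adapted one-form $\alpha$. First I would dispatch the claim $L_K\alpha = 0$ exactly as in Remark~\ref{rem:killing-on-adapted-metric}: writing $\alpha = i_X g$ and using the commutator identity $L_K i_X = i_X L_K + i_{[K,X]}$, one gets
\begin{equation*}
L_K\alpha = L_K i_X g = i_{[K,X]} g + i_X L_K g = i_{L_K X} g + i_X L_K g,
\end{equation*}
which vanishes term-by-term from the hypotheses $L_K X = 0$ and $L_K g = 0$.

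The substantive step is to re-express the equilibrium condition. Starting from $\curl X \times X = \nabla p$, I would contract with $g$ and unwind the two identities in \eqref{eq:curlandcrossprod}. Since $i_{\nabla p} g = dp$, and since $i_{\curl X \times X} g = i_X i_{\curl X}\mu$ by the cross-product identity, and $i_{\curl X}\mu = d\,i_X g = d\alpha$ by the curl identity, these combine to the clean formula
\begin{equation*}
dp = i_X\, d\alpha .
\end{equation*}
This is the identity on which the whole lemma turns, so the point requiring care is to track the ordering in $i_{A\times B} g = i_B i_A\mu$ and confirm there is no stray sign; everything downstream is formal.

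With $dp = i_X d\alpha$ in hand, I would apply $L_K$ to both sides. Because $L_K$ commutes with $d$, the left side is $dL_Kp$. For the right side, the same commutator identity gives
\begin{equation*}
L_K\big(i_X\, d\alpha\big) = i_{L_K X}\, d\alpha + i_X\, L_K\, d\alpha = i_X\, d\big(L_K\alpha\big),
\end{equation*}
using $L_K X = 0$ in the first term and $L_K d\alpha = d L_K\alpha$ in the second. By the first paragraph $L_K\alpha = 0$, so the right side vanishes and we conclude $dL_K p = L_K dp = 0$, proving the main assertion.

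Finally, for the ``in particular'' clause, $dL_Kp = 0$ forces the smooth function $L_K p = K(p)$ to be locally constant, hence constant on the connected manifold $M$. Evaluating at a critical point $x_0$ of $p$, where $dp|_{x_0} = 0$ and therefore $(L_K p)(x_0) = dp(K)|_{x_0} = 0$, pins this constant to zero, so $L_K p \equiv 0$. I do not anticipate a real obstacle here; the only place demanding attention is the sign bookkeeping in deriving $dp = i_X d\alpha$ from \eqref{eq:curlandcrossprod}.
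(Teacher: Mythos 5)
Your proof is correct and follows essentially the same route as the paper's: establish $L_K\alpha = 0$ from the Killing and symmetry hypotheses, apply $L_K$ to the identity relating $dp$ to $i_X d\alpha$ using the commutator formula to get $dL_Kp = L_K dp = 0$, then use connectedness plus a critical point of $p$ to pin the constant $L_K p$ to zero. The one discrepancy is the sign of the key identity --- the paper's proof writes $-dp = i_X d\alpha$ while you derive $dp = i_X d\alpha$ directly from Equation \eqref{eq:curlandcrossprod} --- but this is immaterial to the argument, since applying $L_K$ annihilates the right-hand side regardless of the sign.
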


\begin{proof}
Remark \ref{rem:killing-on-adapted-metric} gives immediately that $L_K \alpha = 0$. The fact that $\alpha$ is preserved by $K$ follows immediately from Remark \ref{rem:killing-on-adapted-metric}. On the other hand
\begin{equation*}
-dp = i_X d\alpha,  
\end{equation*}
hence
\begin{align*}
-L_K dp = L_K(i_X d\alpha) = i_{(L_K X)} d\alpha + i_X L_Kd\alpha = i_X dL_K\alpha = 0.
\end{align*}
That is, $L_K dp = 0$. We of course have $L_K dp = d L_K p$ and so
\begin{equation*}
L_K p = dp(K)    
\end{equation*}
is a function whose exterior derivative is zero. Moreover, at critical points of $p$, $L_K p$ necessarily vanishes. The conclusion immediately follows.
\end{proof}

For the next statement, given a smooth function $p \in C^{\infty}(M)$, we set
\begin{equation*}
\text{Reg}_M(p) = \{x \in M : dp|_x \neq 0\}.
\end{equation*}
The proposition below establishes a partial converse to Lemma \ref{lem:tangenttobernoulli}. 

\begin{proposition}\label{prop:KcommX}
Let $X$ be a vector field and $p \in C^{\infty}(M)$ have compact level sets. Assume that, with respect to the metric $g$,
\begin{equation}\label{eq:nearly-mhd}
X \cdot \nabla p = 0 = \curl X \cdot \nabla p \text{ on } M, \qquad \Div X = 0 \text{ on } M, \qquad X \cdot n = 0 \text{ on } \partial M.
\end{equation}
\begin{enumerate}
    \item Suppose that $K$ is a Killing field on $(M,g)$ tangent to $\partial M$ such that
\begin{equation*}
L_K p = 0.
\end{equation*}
Then,
\begin{equation*}
L_K X|_{\text{Reg}_M(p)} = 0.
\end{equation*}
\item Let $S$ be a surface embedded in $M$ on which $p$ is constant and regular and $X$ is tangent to $\partial S$. If $X|_S \neq 0$ but vanishes somewhere on $S$ and $K$ is tangent to $\partial S$, then $K = 0$.
\end{enumerate}
\end{proposition}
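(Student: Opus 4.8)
The plan is to prove Proposition~\ref{prop:KcommX} in two parts, treating the two assertions separately since they have rather different flavors.

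\textbf{Part (i).} The key observation is that the hypotheses in Equation~\eqref{eq:nearly-mhd} are precisely what is needed to run the Lie-derivative computation of Lemma~\ref{lem:tangenttobernoulli} in reverse on the regular set. Working on $\mathrm{Reg}_M(p)$, I would first show that $L_K\alpha = 0$ where $\alpha = i_Xg$. Since $K$ is Killing, $L_Kg = 0$, and since $L_Kp = 0$, the one-form $\alpha$ and its exterior derivative $d\alpha$ are constrained by the geometry of the level sets. The idea is that $X$ is tangent to the level sets of $p$ (from $X\cdot\nabla p = 0$) and $\curl X$ is too (from $\curl X\cdot\nabla p = 0$); on a regular level set $S$, which is a compact surface, $X$ is a divergence-free field tangent to $S$. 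The hypothesis $L_Kp=0$ means $K$ is tangent to each level set as well, so $K$ acts on each surface $S$ as a symmetry. The plan is to argue that on $\mathrm{Reg}_M(p)$ the vector field $X$ is essentially determined by the ``adapted'' data $(\alpha,\mu)$ together with the Bernoulli relation $i_Xd\alpha = -dp$, and that $K$ preserves all of this data. Concretely, from $L_Kg=0$ and $L_Kp=0$ I expect to deduce $L_K(dp)=0$, and then show $L_K\alpha=0$ by decomposing $\alpha$ relative to the level-set foliation; the normal component of $\alpha$ is controlled by $dp$ via the metric and the tangential component by the induced geometry on $S$, both preserved by $K$. Once $L_K\alpha = L_K\mu = 0$ are established (the latter being automatic as in Remark~\ref{rem:killing-on-adapted-metric}), the relation $i_{L_KX}g = L_K(i_Xg) - i_XL_Kg = L_K\alpha = 0$ forces $L_KX = 0$ on $\mathrm{Reg}_M(p)$ since $g$ is nondegenerate.

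\textbf{Part (ii).} Here the setting is a single compact regular level surface $S$ on which $p$ is constant, with $X$ tangent to $S$, $X$ tangent to $\partial S$, and $X$ vanishing somewhere on $S$ but not identically. By Part~(i) applied on $S$ (or its proof restricted to $S$), $K$ is a symmetry of $X$ on $S$, i.e.\ $[K,X] = L_KX = 0$ there. The strategy is a standard argument using the zeros of $X$: let $q\in S$ be a point where $X(q) = 0$. Since $[K,X]=0$, the flow of $K$ permutes the zero set of $X$, and moreover $K$ must be tangent to the zero set in an appropriate sense. The crucial step is to show that a Killing field $K$ tangent to $\partial S$ that commutes with a nontrivial divergence-free field $X$ having an isolated-type zero must itself vanish at that zero, and then that it vanishes everywhere by unique continuation/rigidity of Killing fields. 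The plan is: at a zero $q$ of $X$, linearize. Since $L_KX=0$, the flow $\phi_t^K$ of $K$ satisfies $(\phi_t^K)_*X = X$, so $\phi_t^K$ preserves the zero set of $X$; if $K(q)\neq 0$ then the whole $K$-orbit of $q$ consists of zeros of $X$, giving a curve of zeros, along which one analyzes the behavior of $X$ to derive a contradiction with $X\not\equiv 0$ and the divergence-free/area-preserving property of $X$ on the surface. Finally, a Killing field on a connected surface that vanishes to first order at a point (i.e.\ $K(q)=0$ and $\nabla K(q)=0$) must vanish identically; so it remains to upgrade $K(q)=0$ to the vanishing of the derivative, using that $K$ commutes with $X$ whose linearization at $q$ is a nonzero trace-free (divergence-free) endomorphism.

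\textbf{Main obstacle.} I expect the hard part to be Part~(ii): carefully controlling the local structure of $K$ near a zero of $X$ on the surface. The delicate point is extracting, purely from $[K,X]=0$ together with $X(q)=0$ and $X\not\equiv0$, enough rigidity to conclude $K(q)=0$ and that $K$ vanishes to first order, so that the rigidity of Killing fields (a Killing field is determined by its $1$-jet at a point) forces $K\equiv 0$. The subtlety is that the linearization $A = DX(q)$ is trace-free but may be nilpotent or elliptic, and the commutation relation only gives that $DK(q)$ commutes with $A$ in a suitable sense; one must rule out the possibility of a nonzero $K$ commuting with $X$ near an isolated zero, which is where the divergence-free condition and the tangency to $\partial S$ must be used decisively. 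Handling the case where the zero of $X$ is non-isolated (a curve or arc of zeros) will require the separate flow-out argument sketched above.
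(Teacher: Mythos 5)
Your proposal has genuine gaps in both parts, and in both cases the missing ingredient is the same: you never use the hypothesis $\curl X \cdot \nabla p = 0$, i.e.\ the closedness of the 1-form $\omega = i^*(i_Xg)$ on the level surfaces, which is the engine of the paper's proof. In part (i), the step where you claim $L_K\alpha = 0$ because the tangential component of $\alpha$ is ``controlled by the induced geometry on $S$'' does not hold: the induced metric on $S$ does not determine $\omega$ (on a flat torus there is a two-dimensional space of harmonic 1-forms, all isometric data being equal), so knowing that $K$ acts by isometries preserving $p$ does not by itself force $K$ to preserve $\omega$. Note that since $L_Kg=0$ gives $L_K\alpha = i_{L_KX}g$, establishing $L_K\alpha=0$ is exactly equivalent to the desired conclusion, so the entire burden falls on this unsupported claim. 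What actually closes the gap in the paper is Hodge-theoretic: Equation \eqref{eq:nearly-mhd} makes $\omega$ a Neumann $P$-harmonic 1-form with $P = 1/\|\nabla p\|$ restricted to $S$ (Lemma \ref{lemma:XisP-harmonic}); the flow of $K$ preserves the class of Neumann $P$-harmonic forms and acts trivially on $H^1_{\text{dR}}(S)$, and exact Neumann $P$-harmonic forms vanish (Remark \ref{rmk:exact-neumann-harmonic-form}), whence $L_{K_S}\omega = 0$ (Lemma \ref{lem:K-pres-harmonic}); nondegeneracy of $i^*g$ then gives $L_{K_S}X_S = 0$, and density of $\partial$-regular level sets (Corollary \ref{cor:partialdensity}) extends this to $\text{Reg}_M(p)$.

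In part (ii) the gap is fatal to the strategy rather than to a single step. You propose to derive a contradiction purely from: $K$ Killing, $[K,X]=0$ on $S$, $X$ divergence-free (area-preserving), $X\not\equiv 0$ with a zero, and tangency to the boundary. This set of hypotheses is consistent with $K \neq 0$: on the flat torus take $K = \partial_x$ and $X = \sin(y)\,\partial_x$; then $K$ is Killing and nowhere vanishing, $X$ is area-preserving, commutes with $K$, vanishes exactly on two circles, and is not identically zero. So no analysis of linearizations at the (necessarily non-isolated) zeros can yield the contradiction you want; the dual form $\sin(y)\,dx$ in this example is not closed, and that is precisely the hypothesis your sketch discards. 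The paper instead observes that $\omega$ and $\star P\omega$ are both closed and $L_{K_S}$-invariant, so $\omega(K_S)$ and $(\star P\omega)(K_S)$ are constants; these constants vanish at a zero of $X$, hence everywhere, and at any point where $\omega \neq 0$ the pair $(\omega, \star P\omega)$ is a coframe, forcing $K_S$ to vanish there; Lemma \ref{lem:vanishinginsurface} (using $H^1_{\text{dR}}(S)\neq \{0\}$, again from Lemma \ref{lemma:XisP-harmonic}) then gives $K_S \equiv 0$, and Lemma \ref{lem:vanishingonsurface}(ii) upgrades $K|_S = 0$ to $K = 0$ on all of $M$ --- a final step your sketch also omits, since the stated conclusion is vanishing on $M$, not merely on $S$.
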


Proposition \ref{prop:KcommX} will be proven in this section shortly after establishing some more theory which is also used throughout the paper. Note that the proposition accounts for an MHD equilibrium $(X,p)$ because, as seen immediately from Equation \eqref{eq:MHD}, $p$ is a first integral of both $X$ and $\curl X$. We consider Equation \eqref{eq:nearly-mhd} (as opposed to the less general MHD equations) primarily because it additionally captures first integrals that may exist for Beltrami fields (see the forthcoming Remark \ref{rmk:Beltrami}). Recall we have defined these as steady Euler flows on $(M,g)$ (where $g$ is a chosen metric on $M$) such that the pressure function satisfies $\nabla p = 0$. That is, a vector field $X$ on $M$ such that
\begin{equation*}
\Div X = 0, \qquad \curl X \times X = 0.
\end{equation*}
We discuss some more the notion of Beltrami field.

\begin{remark}\label{rmk:Beltrami}
If $X$ is a Beltrami field on $(M,g)$, then, for some (not necessarily smooth) function $\lambda : M \to \RR$,  
\begin{equation}\label{eq:Beltrami-proportionality}
\curl X = \lambda X.
\end{equation}
A function $\lambda : M \to \RR$ in Equation \ref{eq:Beltrami-proportionality} is called a proportionality factor of $X$ (it is not unique if, for instance, $X$ has zeros and continuity of the proportionality factor is not required). 

In particular, if $p$ is a function on $(M,g)$ such that $X \cdot \nabla p = 0$, then also $\curl X \cdot \nabla p = 0$ which means that, with boundary conditions imposed, Lemma \ref{prop:KcommX} applies in this setting.

To prove the existence of a proportionality factor as in Equation \eqref{eq:Beltrami-proportionality}, we note that
\begin{equation*}
0 = \Div(X \times \curl X) = \curl X \cdot \curl X - X \cdot \curl \curl X    
\end{equation*}
and therefore, for $x \in M$, if $X|_x = 0$, then $\curl X|_x = 0$ and thus, Equation \eqref{eq:Beltrami-proportionality} holds. 

If $X$ has a smooth proportionality factor $\lambda$ (such is the case when $X$ has no zeros) then
\begin{equation*}
0 = \Div \curl X = \Div \lambda X = \nabla \lambda \cdot X + \lambda \Div X = \nabla \lambda \cdot X   
\end{equation*}
so that $\lambda$ is a first integral of $X$. 

A result similar to Lemma \ref{lem:tangenttobernoulli} holds for smooth proportionality factors $\lambda$ (instead of $p$) but without any global restrictions on $\lambda$. The global restriction placed on \ref{lem:tangenttobernoulli} was that $p$ was assumed to have a critical point. To show this similar result, writing $\alpha = i_X g$, we have
\begin{equation*}
d\alpha = i_{\curl X} \mu = \lambda i_X \mu.    
\end{equation*}
So, if $K$ is a Killing field on $(M,g)$ with $L_K X = 0$. We obtain from Lemma \ref{lem:tangenttobernoulli} that $L_K \alpha = 0$ and therefore
\begin{equation*}
0 = L_K d\alpha = L_K \lambda i_X \mu = (L_K\lambda)i_X \mu + \lambda L_K(i_X \mu) = (L_K\lambda)i_X \mu
\end{equation*}
so that $L_K \lambda = 0$.
\end{remark}

To prove Proposition \ref{prop:KcommX}, we introduce the notion $P$-harmonic 1-forms. This follows the work in \cite{PPS} but in the case of manifolds with boundary. We emphasise to the reader at this point that in many of our arguments, we consider the case of manifolds with boundary with the understanding that the case of empty boundary makes some steps of proofs vacuously true by virtue of there being no boundary points to consider.

\begin{definition}
Let $S$ be a compact oriented 2-manifold with (possibly empty) boundary, henceforth called a surface, and $P \in C^{\infty}(S)$ be a positive function. Let $h$ be a metric on $S$ (henceforth, $(S,h)$ is said to be a Riemannian surface). We say that a 1-form $\omega \in \Omega^1(S)$ is a $P$-harmonic 1-form on $(S,h)$ if
\begin{equation*}
d\omega = 0, \qquad \delta P \omega = 0
\end{equation*}
(where $\delta$ is the codifferential on $S$) and we say in addition that $\omega$ is Neumann if
\begin{equation*}
\omega(n) = 0   
\end{equation*}
where $n$ denotes the outward unit normal of $(S,h)$.
\end{definition}

The notion of $P$-harmonic forms enters in the proof of Proposition \ref{prop:KcommX} through the following lemma.

\begin{lemma}\label{lemma:XisP-harmonic}
Let $p \in C^{\infty}(M)$ and $X$ a vector field on $M$ satisfying
\begin{equation}\label{psuedoMHS}
X \cdot \nabla p = 0 = \curl X \cdot \nabla p \text{ on } M, \qquad \Div X = 0 \text{ on } M, \qquad X \cdot n = 0 \text{ on } \partial M
\end{equation}
with respect to the metric $g$, where $n$ is the outward unit normal. Let $S$ be a surface embedded in $M$ such that $p$ is constant and regular on $S$ and $X$ is tangent to $\partial S$. Then, the 1-form
\begin{equation*}
\omega = i^*X^{\flat},
\end{equation*}
where $X^{\flat} = i_X g$, is a Neumann $P$-harmonic 1-form on $(S,i^*g)$, where $S$ is oriented by $M$ and $\nabla p$, whereby
\begin{equation*}
P = \frac{1}{\|\nabla p\||_S}.
\end{equation*}
Moreover, if $X|_S \neq 0$, then $H_{\text{dR}}^{1}(S) \neq \{0\}$, where $H_{\text{dR}}^1(S)$ denotes the first de Rham cohomology of $S$.
\end{lemma}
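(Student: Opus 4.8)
The plan is to exploit that, on the regular level set $S$, both $X$ and $\curl X$ are tangent to $S$, and then to verify the three defining properties of a Neumann $P$-harmonic form one at a time, reserving the cohomological statement for an integration-by-parts argument. Since $p$ is regular on $S$, the vector $\nabla p$ is a nonzero normal to $S$, so the hypotheses $X\cdot\nabla p = 0$ and $\curl X\cdot\nabla p = 0$ say exactly that $X|_S$ and $\curl X|_S$ are tangent to $S$. Writing $h = i^*g$ and letting $\mu_h$ be the area form of $(S,h)$, for every $V\in TS$ we then have $\omega(V) = g(X,V)$, so $\omega$ is the $h$-dual of the tangential field $X|_S$; in particular $\omega\neq 0$ wherever $X|_S\neq 0$, which will be used at the end.

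For closedness I would invoke the definition of curl, $dX^{\flat} = i_{\curl X}\mu$, so that $d\omega = i^*(dX^{\flat}) = i^*(i_{\curl X}\mu)$. Evaluating this pulled-back $2$-form on $V,W\in TS$ gives $\mu(\curl X,V,W)$, which vanishes because $\curl X,V,W$ are three vectors tangent to the $2$-dimensional $S$. The Neumann condition is just as direct: at a boundary point $\omega(n) = g(X,n)$, and since $X$ is tangent to $\partial S$ while $n$ is the outward unit normal to $\partial S$ inside $S$, we get $g(X,n)=0$.

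The crux is the weighted co-closedness $\delta(P\omega)=0$, equivalently $\Div_h(PX|_S)=0$ on $S$. The idea is to realise the weight $P = 1/\|\nabla p\|$ through a coarea-type splitting of $\mu$ near $S$: setting $\sigma = \tfrac{1}{\|\nabla p\|^2}\,i_{\nabla p}\mu$, a Cartan-calculus identity gives $dp\wedge\sigma = \mu$ and $i^*\sigma = P\mu_h$. Because $X$ is tangent to $S$, one obtains $P\,i_X\mu_h = i^*(i_X\sigma)$, whence $\Div_h(PX)\,\mu_h = d\bigl(i^*(i_X\sigma)\bigr) = i^*(L_X\sigma) - i^*(i_X\,d\sigma)$. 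I would then show both terms vanish after pullback: from $\Div X=0$ and $L_X p = dp(X)=0$ one gets $0 = L_X\mu = dp\wedge L_X\sigma$, and since $i^*dp = 0$ (as $p$ is constant on $S$) this forces $i^*(L_X\sigma)=0$; while $d\sigma$ is a top-form $\phi\mu$ on the $3$-manifold, so $i^*(i_X\,d\sigma) = \phi\,i^*(i_X\mu)=0$, the last pullback vanishing again because $X$ is tangent to $S$. Producing exactly the weight $1/\|\nabla p\|$ is the main obstacle, and the splitting $\mu = dp\wedge\sigma$ is precisely what makes it come out.

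Finally, for the cohomological conclusion, assume $X|_S\neq 0$, so that $\omega$ is a nonzero closed $1$-form. If $H^1_{\mathrm{dR}}(S) = \{0\}$, then $\omega = df$ for some $f\in C^\infty(S)$, and the $P$-harmonic and Neumann conditions become $\Div_h(P\nabla_h f) = 0$ with $\partial_n f = 0$ on $\partial S$. Multiplying by $f$ and integrating by parts gives $\int_S P\,\|\nabla_h f\|^2\,\mu_h = \int_{\partial S} fP\,\partial_n f\,\dd s = 0$; since $P>0$ this forces $\nabla_h f = 0$, hence $\omega = 0$, a contradiction. Therefore $H^1_{\mathrm{dR}}(S)\neq\{0\}$.
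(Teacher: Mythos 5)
Your proposal is correct and takes essentially the same route as the paper's proof: the weight $P$ is realized through the same $2$-form $\sigma = i_{N}\mu$ with $N = \nabla p/\|\nabla p\|^2$ (the paper's splitting $i_N\mu \wedge dp = \mu$), the vanishing of the pulled-back Lie-derivative term is obtained by the same wedge-with-$dp$ argument using $i^*dp = 0$ and $dp|_S \neq 0$, and the cohomological conclusion is the paper's Green's-formula argument for exact Neumann $P$-harmonic forms, merely inlined. The only cosmetic differences are that you prove closedness by directly noting that $\curl X$, $V$, $W$ are three vectors tangent to a $2$-manifold (where the paper again uses the wedge argument), and that you use Cartan's formula, picking up the harmless extra term $i^*(i_X\,d\sigma)$, where the paper invokes naturality of the Lie derivative.
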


\begin{proof}
Although this is implicitly proven in \cite{PPS} for the case of $\partial S = \emptyset$, for completeness, we give here a direct proof in the general case. Firstly, by restricting to open subsets, note that it suffices to consider the case where $p$ is regular everywhere, although we will only leverage this when computing $\delta P\omega$. To compute $d\omega$, first note that
\begin{equation*}
d\omega = i^*dX^{\flat} = i^*(i_{\curl X} \mu).   
\end{equation*}
Next, we have as well
\begin{equation*}
(i_{\curl X} \mu) \wedge dp = dp(\curl X) \mu = (\nabla p \cdot \curl X)\mu = 0.   
\end{equation*}
Then, because $i^*dp = 0$ and $dp|_S$ has no zeros, evaluating $(i_{\curl X} \mu) \wedge dp$ at points $x \in S$ along vectors tangent to $S$ shows that $i^*(i_{\curl X} \mu) = 0$ and thus
\begin{equation*}
d\omega = 0.    
\end{equation*}

To compute $\delta P\omega$, consider the vector field $X_S$ related to $X$ on $S$ via the inclusion $i : S \subset M$ (that is, $Ti \circ X_S = X \circ i$). Then if $\mu_S$ denotes the Riemannian volume form on $(S,i^*g)$, we have
\begin{equation*}
(\delta P\omega)\mu_S = -L_{P X_S}\mu_S = -L_{X_S}(P \mu_S)    .
\end{equation*}
Because $p$ is constant and regular on $S$, a unit normal section $\tilde{n} : S \to TM$ for $S$ in $M$ is given by the formula $\tilde{n} = \nabla p/\|\nabla p\| |_S$ and thus the volume form $\mu_S$ may be written as
\begin{equation*}
\mu_S = i^*(i_{\nabla p/\|\nabla p\|}\mu)   
\end{equation*}
and by naturality of Lie derivatives, setting $\tilde{P} = \frac{1}{\|\nabla p\|}$, we have 
\begin{equation*}
L_{X_S}P \mu_S = i^*(L_X \tilde{P} i_{\nabla p/\|\nabla p\|}\mu) = i^*(L_X i_{N}\mu),
\end{equation*}
where $N = \nabla p/\|\nabla p\|^2$. On the other hand
\begin{equation*}
i_N \mu \wedge dp = dp(N) \mu = \mu,
\end{equation*}
and thus, because $L_X dp = 0$ and $L_X \mu = 0$, we deduce that
\begin{equation*}
0 = L_X\mu = L_X(i_N \mu) \wedge dp + i_N \mu \wedge L_X dp = L_X(i_N \mu) \wedge dp.    
\end{equation*}
As before, because $i^*dp = 0$ and $dp|_S$ has no zeros, we obtain that $i^*L_X(i_N \mu) = 0$ and therefore
\begin{equation*}
L_{X_S}P \mu_S = 0.
\end{equation*}
This implies that
\begin{equation*}
\delta P\omega = 0,    
\end{equation*}
and because $X$ is tangent to $\partial S$, it implies $X_S$ is tangent to $\partial S$ and therefore $\omega$ satisfies
\begin{equation*}
\omega(n_S) = 0,     
\end{equation*}
where $n_S : \partial S \to TS$ is the outward unit normal of $(S,i^*g)$. Thus, $\omega$ is a $P$-harmonic 1-form. 

Now suppose that $X|_S \neq 0$. Then $\omega \neq 0$ because $X$ is tangent to $S$. Following Remark \ref{rmk:exact-neumann-harmonic-form}, we must have that the cohomology class $[\omega] \in H_{\text{dR}}^1(S)$ is non-zero. Thus, $H_{\text{dR}}^{1}(S) \neq \{0\}$.
\end{proof}

We now develop some basic properties of $P$-harmonic 1-forms.

\begin{remark}\label{rmk:exact-neumann-harmonic-form}
When $\partial S = \emptyset$, the Neumann boundary condition is vacuous. Neumann $P$-harmonic 1-forms bear many similarities with Neumann harmonic 1-forms (for the case of $\partial S = \emptyset$, see \cite{W}). For instance, all exact Neumann $P$-harmonic 1-forms are identically zero. Indeed, suppose that $\omega = df$ is a Neumann $P$-harmonic 1-form which is also exact. Then by Green's formula, letting $n$ and $t$ respectively denote the normal and tangential component operators (see \cite{S} for details), we obtain that
\begin{equation*}
(\omega,P \omega)_{L^2} = (df,P \omega)_{L^2} = (f,\delta P \omega)_{L^2} + \int_{\partial S} tdf \wedge \star n\omega = 0
\end{equation*}
so that $\omega = 0$. That is, all exact Neumann $P$-harmonic 1-forms are zero. 
\end{remark}

Given the similarity between harmonic 1-forms and $P$-harmonic 1-forms, it is unsurprising that $P$-harmonic 1-forms bear the following relationship with Killing fields.

\begin{lemma}\label{lem:K-pres-harmonic}
Let $K$ be a Killing field on a Riemannian surface $S$ which is tangent to the boundary and satisfies $L_K P = 0$. If $\omega \in \Omega^1(S)$ is a Neumann $P$-harmonic 1-form on $S$, then
\begin{equation*}
L_K\omega = 0.
\end{equation*}
\end{lemma}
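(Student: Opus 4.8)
The plan is to show that $L_K\omega$ is itself an \emph{exact} Neumann $P$-harmonic 1-form, at which point Remark \ref{rmk:exact-neumann-harmonic-form} forces $L_K\omega = 0$. So the work reduces to checking the three defining properties (closed, $\delta(P\,\cdot\,) = 0$, Neumann) for $L_K\omega$, together with exactness.

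First I would dispose of closedness and exactness simultaneously. Since $\omega$ is $P$-harmonic we have $d\omega = 0$, so Cartan's formula gives $L_K\omega = i_K d\omega + d\,i_K\omega = d(\omega(K))$. Hence $L_K\omega$ is exact, and in particular closed. Next I would verify $\delta(P\,L_K\omega) = 0$. The key point is that a Killing field commutes with the Hodge star: its flow is a one-parameter group of orientation-preserving isometries of $(S,h)$, and $\star$ is determined by $h$ and the orientation, so $L_K \star = \star L_K$; combined with $L_K d = d L_K$ this gives that $L_K$ commutes with $\delta = \pm \star d \star$. Since $L_K P = 0$, we have $P\,L_K\omega = L_K(P\omega)$, and therefore $\delta(P\,L_K\omega) = \delta L_K(P\omega) = L_K\,\delta(P\omega) = 0$, using the $P$-harmonicity $\delta(P\omega) = 0$.

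The step I expect to be the main obstacle is the Neumann boundary condition $(L_K\omega)(n) = 0$ along $\partial S$, since it is the only place where the interplay of $K$ with $\partial S$ enters. Because $K$ is tangent to $\partial S$ and Killing, its flow preserves $\partial S$ and carries the outward unit normal to the outward unit normal, so $L_K n = 0$ along $\partial S$. Differentiating the identity $\omega(n) = 0$ (which holds on $\partial S$) along the boundary-tangent field $K$ then yields $0 = K(\omega(n)) = (L_K\omega)(n) + \omega(L_K n) = (L_K\omega)(n)$, which is exactly the Neumann condition for $L_K\omega$. Care is needed here only to interpret $n$ and $\omega(n)$ as quantities defined along $\partial S$ and to note that the derivative makes sense precisely because $K$ is tangent to $\partial S$.

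Assembling these observations, $L_K\omega$ is an exact Neumann $P$-harmonic 1-form, so Remark \ref{rmk:exact-neumann-harmonic-form} gives $L_K\omega = 0$, completing the proof.
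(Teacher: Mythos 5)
Your proof is correct, but it runs infinitesimally where the paper runs through the flow. The paper considers the global flow $\psi^K_t$ of $K$ (which exists because $S$ is compact and $K$ is tangent to $\partial S$), checks that $(\psi^K_t)^*\omega$ is again a Neumann $P$-harmonic 1-form, and then uses that $\psi^K_t$ is isotopic to the identity, hence acts trivially on $H^1_{\text{dR}}(S)$, to conclude that $(\psi^K_t)^*\omega - \omega$ is exact; Remark \ref{rmk:exact-neumann-harmonic-form} then kills this difference for every $t$, which gives $L_K\omega = 0$. You instead apply the Remark once, to $L_K\omega$ itself: Cartan's formula $L_K\omega = d(i_K\omega)$ gives exactness with no topological input at all, and you verify the two remaining conditions by differentiating the corresponding finite statements ($L_K$ commutes with $\star$, hence with $\delta$, because the flow is by orientation-preserving isometries; $L_K n = 0$ along $\partial S$ because the flow preserves $\partial S$ and the outward normal). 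The trade-offs: your route avoids the isotopy-invariance-of-cohomology step and never needs the flow to be complete, so it would work verbatim for noncompact $S$; the paper's route keeps every verification at the level of finite isometries, where ``isometries commute with $\delta$'' and ``isometries carry $n$ to $n$'' are immediate, and thereby sidesteps the one delicate point in your argument, namely the Leibniz computation $0 = K(\omega(n)) = (L_K\omega)(n) + \omega(L_K n)$ with $n$ defined only along $\partial S$. You flag this correctly; the cleanest way to make it rigorous is precisely the paper's identity $n(\psi^K_t(x)) = T\psi^K_t|_x\, n(x)$, from which $\bigl((\psi^K_t)^*\omega\bigr)(n) = 0$ for all $t$, and differentiation at $t = 0$ yields the Neumann condition for $L_K\omega$.
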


\begin{proof}
Since $K$ is tangent to the boundary and $S$ is compact, $K$ is a complete vector field on $S$. Therefore, there exists a global flow $\psi^K$ of $K$ for which $\partial S$ is invariant. Letting $t \in \mathbb{R}$ and using the fact that $\psi^K_t$ is an orientation-preserving isometry, we have that
\begin{align*}
\left(\psi^K_t\right)^* \delta &= \delta \left(\psi^K_t \right)^*\\
n \left(\psi^K_t(x)\right) &= T\psi^k_t|_x n(x)
\end{align*}
for $x \in \partial S$. Now, $\psi_t^K$ is isotopic to the identity for any $t$ and hence induces the identity map in cohomology. By assumption, we also have that $\left(\psi^K_t\right)^*P = P$. Altogether, this demonstrates that if $\omega $ is a Neumann $P$-harmonic 1-form, then $\left(\psi_t^K\right)^*\omega$ is a Neumann $P$-harmonic 1-form also and their difference is exact; namely
\begin{equation*}
\left(\psi_t^K \right)^*\omega - \omega = df
\end{equation*}
for some $f \in C^{\infty}(S)$. However, this in turn implies $df$ is an exact Neumann $P$-harmonic 1-form and therefore zero by Remark \ref{rmk:exact-neumann-harmonic-form}.
\end{proof}

We mention here an important lemma for the paper which also plays a role in the proof of Proposition \ref{prop:KcommX}.

\begin{restatable}{lemma}{vanishingonsurface}\label{lem:vanishingonsurface}
Let $g$ be a metric on a connected manifold $M$ with boundary and let $K$ a Killing field on $(M,g)$. Let $S$ be a submanifold of $M$ with boundary. Then, the following hold.
\begin{enumerate}
    \item If $K$ is tangent to $S$, then the vector field $K_S$ related to $K$ via the inclusion $i : S \subset M$, is a Killing field on the Riemannian manifold $(S,i^*g)$ with boundary.
    \item If $K|_S = 0$ and $S$ has codimension $1$, then $K = 0$ on $M$.
\end{enumerate}
\end{restatable}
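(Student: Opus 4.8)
The plan is to prove both parts using the flow of $K$ together with the defining property $L_K g = 0$. For part (i), recall that an isometry restricts to an isometry of any invariant submanifold with the induced metric; the infinitesimal version of this is what is needed. First I would note that since $K$ is tangent to $S$, it is $i$-related to a unique vector field $K_S$ on $S$, so that $Ti \circ K_S = K \circ i$. Then I would compute $L_{K_S}(i^*g)$. By naturality of the Lie derivative with respect to $i$-related vector fields, one has $i^*(L_K g) = L_{K_S}(i^*g)$ — more precisely, for $i$-related fields the pullback commutes with the Lie derivative along them. Since $L_K g = 0$ by hypothesis, it follows that $L_{K_S}(i^*g) = 0$, so $K_S$ is Killing on $(S, i^*g)$. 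This part is essentially a formal manipulation and should be routine.

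For part (ii), the hypothesis is that $K$ vanishes identically on a hypersurface $S$ (codimension $1$) and $M$ is connected, and the conclusion is that $K \equiv 0$ on all of $M$. The natural approach is via the linearization of $K$ at points of $S$. A Killing field is determined, near any point, by its $1$-jet: the value $K|_x$ together with the skew-symmetric endomorphism $(\nabla K)|_x$ (skew by the Killing equation). If I can show that both $K$ and $\nabla K$ vanish at some point $x_0 \in S$, then by the standard unique-continuation property of Killing fields — the fact that a Killing field with vanishing $1$-jet at a point of a connected manifold is identically zero — the conclusion $K \equiv 0$ follows. So the task reduces to showing that $(\nabla K)|_{x_0} = 0$ for some (hence, as I will argue, every) $x_0 \in S$.

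To see that $\nabla K$ vanishes on $S$, I would argue as follows. Since $K|_S = 0$, the covariant derivative $\nabla_v K$ at a point $x_0 \in S$ in any direction $v$ tangent to $S$ is zero; indeed, differentiating the identity $K \equiv 0$ along $S$ kills the tangential part, and since $K|_{x_0}=0$ there is no Christoffel correction, so $\nabla_v K = 0$ for all $v \in T_{x_0}S$. It remains to handle the single normal direction $\nu$. Here I would invoke the Killing equation in the form that $A := \nabla K$ is skew-symmetric, i.e. $g(\nabla_a K, b) + g(a, \nabla_b K) = 0$. Taking $a = \nu$ and $b$ tangent to $S$ gives $g(\nabla_\nu K, b) = -g(\nu, \nabla_b K) = 0$ since $\nabla_b K = 0$ for tangential $b$; thus $\nabla_\nu K$ has no component along $T_{x_0}S$. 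Taking $a = b = \nu$ in the skew-symmetry relation gives $g(\nabla_\nu K, \nu) = 0$, killing the normal component as well. Hence $\nabla_\nu K = 0$, so the full endomorphism $(\nabla K)|_{x_0} = 0$. Combined with $K|_{x_0} = 0$, the $1$-jet of $K$ vanishes at $x_0$, and unique continuation for Killing fields on the connected manifold $M$ forces $K \equiv 0$.

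I expect the main obstacle to be the invocation of unique continuation for Killing fields in the boundary/codimension-$1$ setting: one must ensure the point $x_0$ and a neighborhood lie in a region where the standard ODE argument (Killing fields satisfy a linear second-order system whose solution is determined by its $1$-jet at a point) applies, and that connectedness of $M$ propagates the vanishing across the whole manifold despite the presence of $\partial M$. The codimension-$1$ hypothesis is what guarantees that the tangential directions at $x_0$ together with the normal span all of $T_{x_0}M$, which is precisely what makes the skew-symmetry argument close off the final normal component; in higher codimension the argument would leave too many directions uncontrolled, so I would emphasize where codimension $1$ is used.
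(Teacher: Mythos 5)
Your part (i) is exactly the paper's argument: naturality of the Lie derivative under the inclusion, $L_{K_S}i^*g = i^*L_Kg = 0$. For part (ii) you take a genuinely different route, and the $1$-jet computation itself is correct and complete: $K|_S=0$ kills all tangential covariant derivatives (no Christoffel correction since $K$ vanishes at the point), and skew-symmetry of $\nabla K$ together with the codimension-$1$ hypothesis kills the remaining normal direction, so the full $1$-jet of $K$ vanishes at every point of $S$. This is lighter than the paper's proof, which never passes through the $1$-jet: the paper reduces to a codimension-zero piece $U$ with $K|_{\partial U}=0$ and proves Lemma \ref{lem:sufficient-case-vanishingonsurface} by showing the flow of $K$ fixes every inward geodesic, which in turn requires the appendix machinery on existence and uniqueness of inward geodesics and Riemannian normal collars (Propositions \ref{prop:uniqueness-of-inward-geodesics} and \ref{prop:Riemannian normal-collars}).

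However, there is a genuine gap at exactly the point you flag but do not resolve: the unique-continuation theorem you invoke (``a Killing field with vanishing $1$-jet at a point of a connected manifold is identically zero'') is a statement about manifolds \emph{without} boundary, whereas $M$ has boundary and $S$ may lie entirely inside $\partial M$. This is not a corner case: in the paper's main application (Proposition \ref{prop:for-intro-alpha-fixed-1-and-2} (ii), hence Theorem \ref{thm:introalphafixed2}), $S$ is a boundary component of $M$, so your vanishing $1$-jet lives only at boundary points and the boundaryless theorem cannot be applied as stated; handling precisely this difficulty is what Appendix B of the paper exists for. The gap is fixable within your framework, and more cheaply than in the paper: for a Killing field the second covariant derivative is a curvature term linear in $K$ (in one common convention, $\nabla^2_{X,Y}K = -R(K,X)Y$, an identity valid up to $\partial M$ by continuity), so the pair $(K,\nabla K)$ satisfies a first-order linear ODE along any smooth curve in $M$. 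Taking a short smooth curve from $x_0\in S\subset\partial M$ into $\Int M$, ODE uniqueness propagates the vanishing $1$-jet to an interior point; then the standard theorem applies on $\Int M$ (which is connected and boundaryless), and $K\equiv 0$ on $M$ follows by density of $\Int M$. The same two-step structure should also be used in your easy case $S\cap\Int M\neq\emptyset$: apply unique continuation on $\Int M$, not on $M$ itself, and conclude by continuity. Without some such argument your proof is incomplete at its crux, since the $1$-jet reduction, while correct, is the easy half of the problem.
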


Lemma \ref{lem:vanishingonsurface} is an elementary fact proven in Appendix \ref{app:surfacegeo}. Only part (i) enters Proposition \ref{prop:KcommX}. Part (ii) enters the proof of the main result of the paper as a mechanism for ruling out the existence of Killing fields on a 3-manifold with boundary; once it is known that a Killing field vanishes on a surface, according to (ii) it follows that the Killing field vanishes everywhere. This is used in conjunction with the following Lemma.

\begin{lemma}\label{lem:vanishinginsurface}
Let $S$ be a connected Riemannian surface such that $\text{H}_{dR}^1(S) \neq \{0\}$. If $K$ is a Killing field on $S$ which is tangent to $\partial S$, then $K = 0$ or $K$ has no zeros.
\end{lemma}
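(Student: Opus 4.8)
The plan is to show that if $K \not\equiv 0$ then $K$ is nowhere vanishing, by analysing the zeros of $K$ through their linearisations and then applying the Poincaré–Hopf index theorem. First I would invoke the standard rigidity of Killing fields: on a connected manifold a Killing field is determined by the pair $(K(p), \nabla K(p))$ at a single point $p$, since the Killing equation forces the second covariant derivatives of $K$ to be algebraic in $(K,\nabla K)$ (via the curvature tensor), yielding a first-order linear ODE for $(K,\nabla K)$ along curves. Consequently, if $K$ vanishes at $p$ together with its covariant derivative, then $K \equiv 0$ on $S$. Hence, assuming $K \not\equiv 0$, at every zero $p$ of $K$ the linear map $A := \nabla K(p) = DK(p)$ is non-zero; since $K$ is Killing, $A$ is skew-symmetric with respect to $g_p$, and a non-zero skew-symmetric endomorphism of the $2$-dimensional space $T_pS$ is an infinitesimal rotation, hence invertible with $\det A > 0$. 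Thus each zero is nondegenerate, in particular isolated, with index $\sign(\det A) = +1$. As $S$ is compact, $K$ then has only finitely many zeros, all of index $+1$.

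Next I would rule out zeros on $\partial S$. If $p \in \partial S$ were a zero, then the flow of $K$, which preserves $\partial S$ (as $K$ is tangent to it) and fixes $p$, would linearise at $p$ to the one-parameter group $\exp(tA)$ of non-trivial rotations of $T_pS$; these would have to preserve the line $T_p(\partial S)$, which a non-trivial rotation cannot do. So all zeros lie in the interior. Doubling $S$ across its boundary to the closed surface $DS$, with $\chi(DS) = 2\chi(S) - \chi(\partial S) = 2\chi(S)$ since $\partial S$ is a union of circles, and reflecting $K$ — which is tangent to $\partial S$ and nonvanishing there — yields a continuous vector field on $DS$ whose zeros are two index-$+1$ copies of each interior zero of $K$ (the index being orientation-independent, it is unchanged under the reflection). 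Poincaré–Hopf on $DS$ then gives $2N = 2\chi(S)$, so the number $N$ of zeros of $K$ equals $\chi(S)$. Equivalently, one may cite directly the boundary-tangent version of Poincaré–Hopf to get $N = \chi(S)$.

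Finally, I would observe that the hypothesis $H^1_{dR}(S) \neq \{0\}$ excludes exactly the sphere and the disk among compact connected oriented surfaces, and for every remaining surface $\chi(S) = 2 - 2g - b \leq 0$, where $g$ is the genus and $b$ the number of boundary circles. Since $N \geq 0$ while $N = \chi(S) \leq 0$, we conclude $N = 0$; that is, $K$ has no zeros, giving the desired dichotomy. I expect the main obstacle to be the careful handling of the boundary — namely, justifying that no zeros occur on $\partial S$ and that the index count is unaffected by $\partial S$ — since the usual Poincaré–Hopf theorem applies cleanly only to closed manifolds or under prescribed inward/outward boundary behaviour; the flow-linearisation argument and the doubling are designed to dispose of precisely this point.
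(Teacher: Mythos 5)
Your proof is correct, but it takes a genuinely different route from the paper. The paper's argument stays inside the machinery it has already built: since $H^1_{\text{dR}}(S)\neq\{0\}$ there is a non-zero Neumann harmonic $1$-form $\omega$; by Lemma \ref{lem:K-pres-harmonic} (with $P=1$) the flow of $K$ preserves $\omega$ and $\star\omega$, and closedness of both forms gives $d(\omega(K))=d(\star\omega(K))=0$, so $\omega(K)$ and $\star\omega(K)$ are constants; if $K$ has a zero these constants vanish, forcing $K$ to vanish wherever $\omega\neq 0$, hence everywhere. Your argument instead combines the rigidity of Killing fields (determination by $(K(p),\nabla K(p))$ at one point), the observation that in dimension $2$ every zero of a non-trivial Killing field is nondegenerate of index $+1$, exclusion of boundary zeros via the linearised flow, and Poincar\'e--Hopf (via doubling, or the tangent-to-boundary version) together with the classification of compact oriented surfaces, which converts $H^1_{\text{dR}}(S)\neq\{0\}$ into $\chi(S)\leq 0$ so that the zero count $N=\chi(S)$ must vanish. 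What the paper's route buys is economy and uniformity: the harmonic-form computation is reused almost verbatim in the proof of Proposition \ref{prop:KcommX}(ii), and no surface classification or index theory is needed. What your route buys is a sharper and more classical statement: a non-trivial Killing field tangent to $\partial S$ has exactly $\chi(S)$ zeros, all nondegenerate of index $+1$, which immediately yields the lemma and also explains, e.g., why Killing fields on the round sphere have exactly two zeros. Two cosmetic points to tighten: the doubled vector field is only continuous across $\partial S$ (which suffices for Poincar\'e--Hopf, but is worth saying, or else cite the boundary-tangent version directly and skip the doubling), and in ruling out boundary zeros you should say the linearised flow is a rotation by a \emph{small non-zero} angle for small $t$ --- rotation by $\pi$ does preserve lines, so "non-trivial rotation" alone is not quite the right phrase.
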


\begin{proof}
Because $H^1_{\text{dR}}(S) \neq \{0\}$, there exists a Neumann harmonic 1-form $\omega \neq 0$ (see \cite[Thm.~2.6.1]{S}). Thus, $L_K \omega = 0$ by Lemma \ref{lem:K-pres-harmonic} (with $P = 1$). Now we may consider also $\star \omega$ and obtain also that $L_K{\star \omega} = 0$. We have
\begin{align*}
0 = L_K \omega = i_Kd\omega + d i_K\omega = d\omega(K)
\end{align*}
and analogously $d{\star \omega}(K) = 0$. Hence, both $\omega(K)$ are ${\star \omega}(K)$ are constant. Now, taking a point $x \in S$ where $\omega|_x \neq 0$, we have that $\omega|_x,\star\omega|_x$ is a basis for $T^*_xS$ at $x$ and therefore, if $K \neq 0$, then either $\omega(K)|_x$ or ${\star \omega}(K)|_x$ is non-zero. Thus, if $K \neq 0$, then $K$ has no zeros.
\end{proof}

\begin{remark}
In the proof of Lemma \ref{lem:vanishinginsurface}, it was found that either $\omega(K)$ is nowhere-vanishing or ${\star \omega}(K)$ is nowhere-vanishing. By the Neumann boundary condition on $\omega$, the 1-form ${\star \omega}$ satisfies $\jmath^*{\star \omega} = 0$ where $\jmath : \partial S \subset S$ is the boundary inclusion. Thus, ${\star \omega}(K)|_x = 0$ for $x \in \partial S$. Thus, when $\partial S \neq \emptyset$, we must have ${\star \omega}(K) = 0$ and that $\omega(K)$ has no zeros.
\end{remark}

To prove Proposition \ref{prop:KcommX}, we recall a modified notion of a regular surface for manifolds with boundary; similar to that considered by Arnold in \cite[Thm.~1.2.]{AK}. This is treated in more detail in Appendix \ref{app:partialregvalues}.

Let $p \in C^{\infty}(M)$. Then, we call $y \in \mathbb{R}$ a $\partial$-regular value of $p$ if $y$ is a regular value of $p$ and $p|_{\partial M}$ and we call $p^{-1}(y)$ a $\partial$-regular level set of $p$. The $\partial$-regular level sets $S$ of $p$ are surfaces embedded in $M$ such that
\begin{equation*}
\partial S = S \cap \partial M. 
\end{equation*}
See Theorem \ref{thm:preimagepartial} in Appendix \ref{app:partialregvalues}. The remaining fact we will use about the $\partial$-regular surfaces of a smooth $p : M \to \mathbb{R}$ is their density. Setting
\begin{align*}
\text{Reg}^\partial (p) &= \{y \in \mathbb{R} : y \text{ is a $\partial$-regular value of } p\},
\end{align*}
the fact we will use is that $p^{-1}(\text{Reg}^\partial (p))$ is a dense subset of the open set $\text{Reg}_M(p)$. This is a well-known application of Sard's Theorem (see Corollary \ref{cor:partialdensity} in Appendix \ref{app:partialregvalues}). 

We now give a proof of Proposition \ref{prop:KcommX}.

\begin{proof}[Proof of Proposition \ref{prop:KcommX}]

We prove part (ii) of the proposition first and use elements of the proof for part (i). For any such surface $S$ in the statement of the proposition, as per Lemma \ref{lemma:XisP-harmonic}, writing $i : S \subset M$ for the inclusion, $\omega = i^*X^{\flat}$ is a $P$-harmonic 1-form where
\begin{equation*}
P = \frac{1}{\|\nabla p\||_S}.    
\end{equation*}
Now, we have that $L_K dp = 0$ and thus $L_K \nabla p = 0$ (as $i_{\nabla p}g = dp$) so that
\begin{equation}
L_K \|\nabla p\|^2 = 0.      
\end{equation}
Hence, considering the vector field $K_S=i^*K$ (which exists as $K$ is tangent to $S$), because $K_S$ is a Killing field on $(S,i^*g)$, by Lemma \ref{lem:K-pres-harmonic} we have 
\begin{equation}\label{eq:w-pres-by-Ks}
L_{K_S}\omega = 0.    
\end{equation}
Now, supposing $X|_S \neq 0$, we have from Lemma \ref{lemma:XisP-harmonic} that $H_{\text{dR}}^1(S) \neq 0$. In particular, either $K_S = 0$ or $K_S$ has no zeros. Because $\omega$ is closed, we have by Equation \eqref{eq:w-pres-by-Ks} that
\begin{equation*}
d\omega(K_S) = 0.   
\end{equation*}
Moreover, ${\star P} \omega$ is closed as $\omega$ is $P$-harmonic. Thus, by Equation \eqref{eq:w-pres-by-Ks}, and the fact that $K_S$ is Killing, we get
\begin{equation*}
d{\star P}\omega(K_S) = L_{K_S} {\star P}\omega = 0.   
\end{equation*}
Hence, $\omega(K_S)$ and ${\star P}\omega(K_S)$ are constants. On the other hand, $\omega$ vanishes at some point (since $X$ does) and thus
\begin{equation*}
\omega(K_S) = 0 =({\star P}\omega)(K_S).
\end{equation*}
However, then taking some point $x \in S$ where $\omega|_x \neq 0$, we obtain that $K_S|_x = 0$ and thus $K_S = 0$ by Lemma \ref{lem:vanishinginsurface}. Hence, $K|_S = 0$ and so by Lemma \ref{lem:vanishingonsurface}, $K = 0$.

For part (i), let $S$ be a connected component of a $\partial$-regular level set of $p$. Then, for any vector field $Y$ on $M$ tangent to $\partial M$ and $S$, because $\partial S = S \cap \partial M$ and $S$ and $\partial M$ have transverse intersection at boundary points $x \in \partial S$, it follows that $Y$ is tangent to $\partial S$ too. Hence, it follows that $X$ and $K$ are tangent to $S$ and $\partial S$. Thus, Equation \eqref{eq:w-pres-by-Ks} applies and because $K_S$ is Killing,
\begin{equation*}
L_{K_S} X_S = 0    
\end{equation*}
where $X_S$ the the vector field related to $X$ on $S$ via the inclusion. It follows that
\begin{equation*}
L_K X |_S = 0.    
\end{equation*}
However, we recall that $\partial$-regular level sets of $p$ are dense in $\text{Reg}_M(p)$. It follows that
\begin{equation*}
 L_K X |_{\text{Reg}_M(p)} = 0.  
\end{equation*}
\end{proof}

\section{Guided flows}\label{sec:guided-flows}

Throughout this section, we fix $M$ to be an oriented 3-manifold with (possibly empty) boundary. Here we adopt a similar formalism as in \cite{PRT} but for a slightly more general class of flows.

\begin{definition}\label{def:guidedflow}
A quadruple $(X,\alpha,\mu,p)$ with $X$ a vector field, $\alpha$ a 1-form, $\mu$ a volume form, and $p$ a function on $M$, is said to be a \emph{guided flow} if
\begin{equation*}
\alpha(X) > 0, \qquad L_X \mu = 0, \qquad dp(X) = 0, \qquad d\alpha \wedge dp = 0.
\end{equation*}
\end{definition}

\begin{remark}\label{rmk:guided-is-flux}
In the terminology of \cite{PDP}, a triple $(X,\mu,\nu)$ with $X$ a vector field, $\mu$ a volume form, and $\nu$ a closed 1-form on $M$ satisfying
\begin{equation*}
L_X \mu = 0, \qquad \nu(X) = 0
\end{equation*}
is called a flux system. A 1-form $\alpha$ on $M$ is said to be adapted to the flux system $(X,\mu,\nu)$ if
\begin{equation*}
\alpha(X) > 0, \qquad d\alpha \wedge \nu = 0.
\end{equation*}
From this point of view, a guided flow $(X,\alpha,\mu,p)$ is a flux system $(X,\mu,dp)$ equipped with an adapted 1-form $\alpha$.
\end{remark}

The main application towards ideal MHD equilibria is facilitated by the following.

\begin{proposition}\label{prop:guidedflows}
Let $X$ be a nowhere-vanishing vector field on $M$ and $p \in C^{\infty}(M)$ such that, for some metric $g$ on $M$,
\begin{equation*}
\Div X = 0, \qquad X \cdot \nabla p = 0, \qquad \curl X \cdot \nabla p = 0. 
\end{equation*}
Then $(X,i_Xg,\mu,p)$ is a guided flow on $M$.
\end{proposition}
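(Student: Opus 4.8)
The plan is simply to verify each of the four conditions in Definition \ref{def:guidedflow} for the quadruple $(X,\alpha,\mu,p)$, where $\alpha = i_X g$ and $\mu = \mu_g$ is the Riemannian volume form of $g$. Three of these follow at once from the hypotheses, so the bulk of the work---such as it is---lies in the single condition $d\alpha \wedge dp = 0$.

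For the positivity condition, I would note that $\alpha(X) = (i_X g)(X) = g(X,X) = \|X\|^2$, which is strictly positive everywhere precisely because $X$ is assumed nowhere-vanishing. For the volume-preservation condition, $L_X \mu = 0$ is nothing but a restatement of $\Div X = 0$, since $L_X \mu_g = (\Div X)\,\mu_g$ by the defining property of the Riemannian divergence. For the first-integral condition, $dp(X) = X(p) = g(\nabla p, X) = X \cdot \nabla p = 0$ by hypothesis.

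The one substantive identity is $d\alpha \wedge dp = 0$. I would invoke the definition of curl in Equation \eqref{eq:curlandcrossprod}, which gives $d\alpha = d\,i_X g = i_{\curl X}\mu$. Since $\mu$ is a top-degree form on the $3$-manifold $M$, applying $i_{\curl X}$ to the vanishing $4$-form $\mu \wedge dp$ yields the identity $(i_{\curl X}\mu)\wedge dp = dp(\curl X)\,\mu$. Hence
\begin{equation*}
d\alpha \wedge dp = (i_{\curl X}\mu)\wedge dp = dp(\curl X)\,\mu = (\curl X \cdot \nabla p)\,\mu = 0,
\end{equation*}
by the third hypothesis. This is exactly the manipulation already used in the proof of Lemma \ref{lemma:XisP-harmonic}.

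I do not anticipate any genuine obstacle: the only place demanding a little care is the wedge-product identity on a $3$-manifold and the correct translation between the metric quantities $\Div X$, $X \cdot \nabla p$, $\curl X \cdot \nabla p$ and their differential-form counterparts. With all four conditions verified, $(X,i_X g,\mu,p)$ is a guided flow by definition.
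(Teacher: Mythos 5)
Your proof is correct and is essentially the argument the paper intends: the paper states Proposition \ref{prop:guidedflows} without proof, treating it as the routine verification of Definition \ref{def:guidedflow} that you carry out, and your key step $d\alpha \wedge dp = (i_{\curl X}\mu)\wedge dp = dp(\curl X)\,\mu = 0$ is precisely the computation the paper performs inside the proof of Lemma \ref{lemma:XisP-harmonic}. Nothing is missing.
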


\begin{remark}
Proposition \ref{prop:guidedflows} in particular says that if $(X,p)$ is an MHD equilibrium on $(M,g)$ without zeros, then $(X,i_X g,\mu,p)$ is a guided flow. If $dp = 0$ so that in fact $X$ is a Beltrami field, then
\begin{equation*}
\curl X = \lambda X    
\end{equation*}
for some smooth $\lambda$ and, as in Remark \ref{rmk:Beltrami}, $\lambda$ is a first integral of $X$ (and a first integral of $\curl X$) making $(X,i_X g,\mu,\lambda)$ a guided flow. If $X$ has some first integral $f$, then again by Remark \ref{rmk:Beltrami}, $(X,i_X g,\mu,f)$ is a guided flow.
\end{remark}

The structure of a guided flow induces an important additional vector field, which we call the companion vector field.

\begin{theorem}[{\cite[Thm.~I.3]{PDP}}]\label{thm:prev-paper}
Given a guided flow $(X,\alpha,\mu,p)$, consider the vector field $Y$ such that
\begin{equation*}
i_Y \mu = \frac{1}{\alpha(X)}\alpha \wedge dp,
\end{equation*}
which we will call the \emph{companion vector field of the guided flow $(X,\alpha,\mu,p)$}. Then,
\begin{equation*}
\alpha(Y) = 0, \qquad dp(Y) = 0.
\end{equation*}
Moreover, setting $\tilde{X} = \frac{1}{\alpha(X)}X$, we have
\begin{equation*}
[\tilde{X},Y] = 0.
\end{equation*}
\end{theorem}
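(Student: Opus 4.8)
The plan is to extract the two pointwise identities $\alpha(Y)=0$ and $dp(Y)=0$ directly from the defining relation $i_Y\mu=\tfrac{1}{\alpha(X)}\,\alpha\wedge dp$, and then to establish $[\tilde X,Y]=0$ by contracting with $\mu$ and invoking the guided-flow condition $d\alpha\wedge dp=0$. For the first two identities I would use that every $4$-form on the $3$-manifold $M$ vanishes. Contracting $\alpha\wedge\mu=0$ with $Y$ gives $\alpha(Y)\,\mu=\alpha\wedge i_Y\mu$, and the right-hand side is a multiple of $\alpha\wedge\alpha\wedge dp=0$, so $\alpha(Y)=0$; the identical step applied to $dp\wedge\mu=0$ yields $dp(Y)\,\mu=dp\wedge i_Y\mu$, a multiple of $dp\wedge\alpha\wedge dp=0$, hence $dp(Y)=0$.

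For the commutation, set $f=\tfrac{1}{\alpha(X)}$, so that $\tilde X=fX$ and $i_Y\mu=f\,\alpha\wedge dp$. Since $\mu$ is nowhere zero, the map $Z\mapsto i_Z\mu$ is injective on vector fields, so it suffices to show $i_{[\tilde X,Y]}\mu=0$. I would use the Cartan identity $i_{[\tilde X,Y]}\mu=L_{\tilde X}i_Y\mu-i_Y L_{\tilde X}\mu$ and compute the two terms separately.

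The main work is the term $L_{\tilde X}i_Y\mu=L_{\tilde X}(f\,\alpha\wedge dp)$. Expanding by Leibniz, I claim that only the piece differentiating $f$ survives. Indeed $L_{\tilde X}dp=d\bigl(dp(\tilde X)\bigr)=d\bigl(f\,dp(X)\bigr)=0$ because $dp(X)=0$; and since $\alpha(\tilde X)=f\,\alpha(X)=1$ is constant, Cartan's formula gives $L_{\tilde X}\alpha=i_{\tilde X}d\alpha=f\,i_Xd\alpha$, whence $(L_{\tilde X}\alpha)\wedge dp=f\,(i_Xd\alpha)\wedge dp$. Contracting the guided-flow relation $d\alpha\wedge dp=0$ with $X$ yields $(i_Xd\alpha)\wedge dp+d\alpha\,dp(X)=0$, and $dp(X)=0$ forces $(i_Xd\alpha)\wedge dp=0$. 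Therefore $L_{\tilde X}i_Y\mu=(\tilde X f)\,\alpha\wedge dp=\tfrac{\tilde X f}{f}\,i_Y\mu=X(f)\,i_Y\mu$, using $\tilde X f=f\,X(f)$. For the remaining term, $L_X\mu=0$ shows the $\mu$-divergence of $\tilde X=fX$ equals $X(f)$, so $L_{\tilde X}\mu=X(f)\,\mu$ and $i_Y L_{\tilde X}\mu=X(f)\,i_Y\mu$. The two contributions cancel, giving $i_{[\tilde X,Y]}\mu=0$ and hence $[\tilde X,Y]=0$.

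The hard part is precisely the vanishing of $(i_Xd\alpha)\wedge dp$: this is where the adaptedness hypothesis $d\alpha\wedge dp=0$ combines with $dp(X)=0$, and the normalization $\tilde X=X/\alpha(X)$ is exactly what is needed so that $\alpha(\tilde X)$ is constant and $L_{\tilde X}\alpha$ carries no exact part. Everything else is bookkeeping with interior products and the injectivity of $Z\mapsto i_Z\mu$.
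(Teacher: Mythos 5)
Your proof is correct. Note that the paper itself does not prove this statement --- it is quoted from the earlier work \cite[Thm.~I.3]{PDP} --- so there is no in-paper argument to compare against; your write-up serves as a self-contained verification. All the key steps are sound: the identities $\alpha(Y)=0$ and $dp(Y)=0$ follow exactly as you say from contracting the vanishing $4$-forms $\alpha\wedge\mu$ and $dp\wedge\mu$ with $Y$ and using $\alpha\wedge\alpha\wedge dp=0$, $dp\wedge\alpha\wedge dp=0$; the commutator identity $i_{[\tilde X,Y]}\mu=L_{\tilde X}i_Y\mu-i_YL_{\tilde X}\mu$ together with injectivity of $Z\mapsto i_Z\mu$ is the right reduction; and the two decisive computations --- $(i_Xd\alpha)\wedge dp=0$ from contracting $d\alpha\wedge dp=0$ with $X$ and using $dp(X)=0$, and $L_{\tilde X}\mu=X(f)\,\mu$ from $L_X\mu=0$ --- are carried out correctly, so the two $X(f)\,i_Y\mu$ terms cancel as claimed. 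You also correctly identify where each hypothesis of the guided-flow definition enters: $\alpha(X)>0$ gives smoothness of $f$ and the normalization $\alpha(\tilde X)=1$ (killing the exact part of $L_{\tilde X}\alpha$), $dp(X)=0$ kills $L_{\tilde X}dp$, $d\alpha\wedge dp=0$ kills the remaining term, and $L_X\mu=0$ controls the divergence.
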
 

The vector field $\tilde{X}$ and $Y$ in Theorem \ref{thm:prev-paper} form a nice frame for an adapted metric on constant-$p$ surfaces as follows.

\begin{proposition}\label{prop:companionflow}
Given a guided flow $(X,\alpha,\mu,p)$, let $Y$ be the companion field. If $S$ is a surface on which $p$ is constant and regular, then $\tilde{X}$ and $Y$ descend to everywhere linearly independent commuting vector fields on $S$ which we denote by $\tilde{X}_S$ and $Y_S$. If $g$ is a metric on $M$ adapted to $(X,\alpha,\mu)$ and $i : S \subset M$ denotes the inclusion, the metric $i^*g$ on $S$ may be written as
\begin{equation*}
    i^*g = E \omega^2 + G \eta^2 
\end{equation*}
where
\begin{equation*}
E = 1/\alpha(X), \qquad G = g(Y,Y)|_S    
\end{equation*}
and $(\omega,\eta)$ are the dual co-frame of $(\tilde{X}_S,Y_S)$. In particular, $\omega$ and $\eta$ are closed 1-forms on $S$ and $\omega = i^*\alpha$.   
\end{proposition}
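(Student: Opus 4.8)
The plan is to read off everything from Theorem \ref{thm:prev-paper} together with the single geometric fact that $p$ is constant and regular on $S$. First I would note that both $\tilde{X}$ and $Y$ are tangent to $S$: by the guided-flow condition $dp(X)=0$ and by Theorem \ref{thm:prev-paper} $dp(Y)=0$, so both lie in $\ker dp$, and regularity of $p$ on $S$ gives $T_xS=\ker dp|_x$ for every $x\in S$. Consequently $\tilde{X}$ and $Y$ descend along the inclusion $i\colon S\subset M$ to vector fields $\tilde{X}_S,Y_S$, and since $\tilde{X}$ and $Y$ are tangent to $S$ with $[\tilde{X},Y]=0$ on $M$, the bracket restricts to give $[\tilde{X}_S,Y_S]=0$.

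The first point needing genuine care is everywhere linear independence of $\tilde{X}_S$ and $Y_S$ on the two-dimensional surface $S$. Since $\alpha(X)>0$ the field $\tilde{X}$ never vanishes, so linear dependence at a point would force $Y=\lambda\tilde{X}$ there; applying $\alpha$ and using $\alpha(\tilde{X})=1$ and $\alpha(Y)=0$ yields $\lambda=0$, hence $Y=0$ at that point. I would rule this out by showing $Y\neq 0$ on $S$: the defining relation $i_Y\mu=\alpha(X)^{-1}\,\alpha\wedge dp$ is nonzero wherever $\alpha\wedge dp\neq 0$, and on $S$ the covectors $\alpha$ and $dp$ are linearly independent because $dp\neq 0$ there while $dp(X)=0\neq\alpha(X)$ prevents any proportionality. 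Thus $(\tilde{X}_S,Y_S)$ is a bona fide frame on $S$, with a well-defined dual coframe $(\omega,\eta)$.

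Next I would compute the components of $i^*g$ in this coframe using adaptedness, $i_Xg=\alpha$. The diagonal entries are $g(\tilde{X},\tilde{X})=\alpha(X)^{-2}(i_Xg)(X)=1/\alpha(X)=E$ and $g(Y,Y)=G$, while the crucial off-diagonal entry vanishes, $g(\tilde{X},Y)=\alpha(X)^{-1}(i_Xg)(Y)=\alpha(X)^{-1}\alpha(Y)=0$; this is precisely what collapses $i^*g$ to the diagonal form $E\omega^2+G\eta^2$. For the identification $\omega=i^*\alpha$ I would simply verify that $i^*\alpha$ acts as the dual coframe element to $\tilde{X}_S$, namely $(i^*\alpha)(\tilde{X}_S)=\alpha(\tilde{X})=1$ and $(i^*\alpha)(Y_S)=\alpha(Y)=0$.

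Closedness of $\omega$ and $\eta$ I would obtain from the commuting frame via the identity $d\theta(U,V)=U(\theta(V))-V(\theta(U))-\theta([U,V])$: for $\theta\in\{\omega,\eta\}$ and $U,V\in\{\tilde{X}_S,Y_S\}$ the first two terms vanish as $\omega,\eta$ take constant values on the frame and the last vanishes by $[\tilde{X}_S,Y_S]=0$, so $d\omega=d\eta=0$ since the frame spans $TS$. For $\omega$ one may alternatively use $d\omega=i^*d\alpha=0$, dividing $d\alpha$ locally by $dp$ via $d\alpha\wedge dp=0$ and then using $i^*dp=0$. I expect no serious obstacle in any step; the only delicate point is confirming $Y\neq0$ on $S$, equivalently $\alpha\wedge dp\neq0$ there, which is what upgrades the commuting pair $(\tilde{X}_S,Y_S)$ to an honest frame and underlies both the linear independence and the nondegeneracy of the coframe.
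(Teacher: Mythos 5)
Your proof is correct and follows essentially the same route as the paper's: it invokes Theorem \ref{thm:prev-paper} for $\alpha(Y)=0$, $dp(Y)=0$, $[\tilde{X},Y]=0$, reads off the metric coefficients from adaptedness $i_Xg=\alpha$, and gets closedness of $\omega,\eta$ from the commuting-frame identity together with $\omega=i^*\alpha$ by evaluation on the frame. You actually go one step beyond the paper's written proof, which never explicitly verifies that $Y$ is nonvanishing on $S$ (and hence that $\tilde{X}_S,Y_S$ are everywhere linearly independent); your observation that $\alpha\wedge dp\neq 0$ on $S$, because $\alpha(X)>0$ while $dp(X)=0$ and $dp\neq 0$, cleanly fills in that implicit point.
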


\begin{proof}
Proven in \cite[Thm.~I.3]{PDP} (see Remark \ref{rmk:guided-is-flux}) are the facts that
\begin{equation*}
\alpha(Y) = 0, \qquad dp(Y) = 0, \qquad [\tilde{X},Y] = 0.
\end{equation*}
In particular, $\tilde{X}$ and $Y$ are vector fields tangent to $S$ and thus we may consider the related vector fields $\tilde{X}_S$ and $Y_S$ on $S$. Concerning the metric $i^*g$ on $S$, for any $Z \in \{\tilde{X},Y\}$, denoting by $Z_S$ the corresponding vector field on $S$, we have
\begin{align*}
(i^*g)(\tilde{X}_S,Z_S) &= g(\tilde{X},Z)|_S\\
&= \frac{1}{\alpha(X)}g(X,Z)|_S\\
&= \frac{1}{\alpha(X)}\alpha(Z)|_S.
\end{align*}
We deduce that
\begin{equation*}
(i^*g)(\tilde{X}_S,\tilde{X}_S) = 1/\alpha(X), \qquad (i^*g)(\tilde{X}_S,Y_S) = 0,
\end{equation*}
and hence we have
\begin{equation*}
(i^*g)(Y_S,Y_S) = g(Y,Y)|_S.
\end{equation*}
Now consider the dual co-frame $(\omega,\eta)$ of $(\tilde{X}_S,Y_S)$. We observe that
\begin{align*}
d\omega(\tilde{X}_S,Y_S) &= \tilde{X}_S(\omega(Y_S)) - Y_S(\omega(\tilde{X}_S)) - \omega([\tilde{X}_S,Y_S])\\
&= \tilde{X}_S(0) - Y_S(1) - \omega(0)\\
&= 0
\end{align*}
and hence $d\omega = 0$. Similarly one concludes that $d\eta = 0$. Lastly, observe that
\begin{equation*}
(i^*\alpha)(\tilde{X}_S) = \alpha(\tilde{X})|_S = 1,\qquad
(i^*\alpha)(Y_S) = \alpha(Y)|_S = 0
\end{equation*}
which proves that $\omega = i^*\alpha$.
\end{proof}

A vector field preserving the structure of a guided flow will preserve the companion vector field, as seen in the following.

\begin{lemma}\label{lem:companionispreserved}
Let $(X,\alpha,\mu,p)$ be a guided flow and let $K$ be a vector field on $M$ which is a symmetry of $X$, $\alpha$, $\mu$ and $p$. Then,
\begin{equation*}
L_K Y = 0    
\end{equation*}
where $Y$ is the companion to $(X,\alpha,\mu,p)$.
\end{lemma}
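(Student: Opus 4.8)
The plan is to use that $Y$ is determined by the defining equation $i_Y \mu = \frac{1}{\alpha(X)}\alpha \wedge dp$, together with the fact that $\mu$ is a volume form, so that the contraction map $Z \mapsto i_Z \mu$ is a pointwise isomorphism from vector fields to $2$-forms and in particular injective. Consequently, to prove $L_K Y = 0$ it suffices to show $i_{L_K Y}\mu = 0$, and I would obtain this by applying $L_K$ to both sides of the defining equation.

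For the left-hand side, the Leibniz identity for the Lie derivative gives $L_K(i_Y\mu) = i_{L_K Y}\mu + i_Y(L_K\mu)$, and since $K$ is a symmetry of $\mu$ we have $L_K \mu = 0$, whence $L_K(i_Y\mu) = i_{L_K Y}\mu$. Thus the entire claim reduces to showing that the right-hand side $\frac{1}{\alpha(X)}\alpha\wedge dp$ is preserved by $K$.

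For the right-hand side I would treat the scalar factor and the $2$-form factor separately. Writing $f = \alpha(X)$, the hypotheses $L_K\alpha = 0$ and $L_K X = 0$ give $L_K f = (L_K\alpha)(X) + \alpha(L_K X) = 0$, hence $L_K(1/f) = -f^{-2}L_K f = 0$. For the form factor, $L_K(\alpha\wedge dp) = (L_K\alpha)\wedge dp + \alpha \wedge L_K(dp)$, where $L_K\alpha = 0$ and $L_K(dp) = d(L_K p) = 0$ because $L_K p = 0$. Combining these through the Leibniz rule yields $L_K\!\left(\frac{1}{\alpha(X)}\alpha\wedge dp\right) = 0$, so $i_{L_K Y}\mu = 0$ and therefore $L_K Y = 0$.

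I do not expect a genuine obstacle here: the argument is a direct computation once one records the injectivity of $Z\mapsto i_Z\mu$ and uses that all four objects $X,\alpha,\mu,p$ are preserved by $K$. The only points requiring a moment's care are that the denominator $\alpha(X)$ is itself $K$-invariant, and the commutation $L_K\,d = d\,L_K$ used to pass from $L_K p = 0$ to $L_K(dp) = 0$.
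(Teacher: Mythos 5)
Your proof is correct and follows essentially the same route as the paper's: apply $L_K$ to the defining equation $i_Y\mu = \frac{1}{\alpha(X)}\alpha\wedge dp$, use $L_K\mu = 0$ on the left-hand side and the $K$-invariance of $\alpha(X)$, $\alpha$, and $dp$ on the right-hand side, then conclude $L_K Y = 0$. The only cosmetic difference is that you state explicitly the injectivity of the contraction map $Z\mapsto i_Z\mu$, which the paper uses implicitly in its final step.
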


\begin{proof}
We have
\begin{equation*}
i_Y\mu = \frac{1}{\alpha(X)}\alpha \wedge dp.
\end{equation*}
and so
\begin{equation}\label{eq:applyingK}
L_K(i_Y\mu) = L_K\left(\frac{1}{\alpha(X)}\alpha \wedge dp\right).
\end{equation}
On the left-hand side, we have
\begin{equation}\label{eq:lhsofapplyingK}
\begin{split}
L_K(i_Y\mu) &= i_{(L_K Y)}\mu + i_Y L_K\mu\\
&= i_{(L_K Y)}\mu.  
\end{split}
\end{equation}
Concerning the right-hand side, we have
\begin{equation*}
L_K(\alpha(X)) = (L_K\alpha)(X) + \alpha(L_K X) = 0 + 0 = 0
\end{equation*}
so that
\begin{equation*}
L_K\left(\frac{1}{\alpha(X)}\right) = 0.
\end{equation*}
Moreover,
\begin{equation*}
L_K(\alpha \wedge dp) = (L_K \alpha) \wedge dp + \alpha \wedge (L_K dp). 
\end{equation*}
Hence, the right-hand side of Equation \eqref{eq:applyingK} is zero. Thus, by Equation \eqref{eq:lhsofapplyingK}, the conclusion follows.
\end{proof}
The results introduced in this section give a general framework for some of the results we will prove throughout the next sections. For the interested reader, Example \ref{eg:MHD-with-Killing-on-M} in Section \ref{sec:extensions} exhibits an explicit guided flow and computation of its companion vector field.

\section{Perturbation of adapted metrics}\label{sec:perturbing-adapted-metrics}

The primary aim of this section is to prove Theorem \ref{thm:introalphafixed1} and \ref{thm:introalphafixed2}. This is accomplished by first developing a methodology for perturbing adapted metrics of $(X,\alpha,\mu)$ to new metrics while maintaining the adapted property. This is done in Section \ref{sec:construction-of-pertubation}. Then, the utility of these perturbations is demonstrated by proving a generalisation of Theorem \ref{thm:introalphafixed1} and \ref{thm:introalphafixed2} which is given by Proposition \ref{prop:for-intro-alpha-fixed-1-and-2}. Finally, in Section \ref{sec:proof-of-main-results}, we prove the main results by demonstrating how they are implied by Proposition \ref{prop:for-intro-alpha-fixed-1-and-2}. Before we begin, recall that on an oriented manifold $M$ with boundary, if $X$ is a vector field, $\alpha$ is a 1-form, and $\mu$ is a positively oriented volume-form on $M$, then a metric $g$ on $M$ is said to be \emph{adapted to $(X,\alpha,\mu)$}, provided that
\begin{equation*}
i_Xg = \alpha, \qquad \mu_g = \mu,
\end{equation*}
where $\mu_g$ denotes the Riemannian volume-form on $(M,g)$.

\subsection{Construction and application of a perturbed metric}\label{sec:construction-of-pertubation}

Let $M$ be an oriented 3-manifold with (possibly empty) boundary. Fix a vector field $X$, a function $p$ and a metric $g$ such that on $(M,g)$, we have
\begin{equation}\label{eq:nearly-Euler}
\Div X = 0, \qquad X \cdot \nabla p = 0, \qquad \curl X \cdot \nabla p = 0.
\end{equation}
Set $\alpha = i_X g$ and $\mu = \mu_{g}$. Consider an open subset 
\begin{equation*}
U \subset \text{Reg}_M(p) \cap \{x \in M : X|_x \neq 0\}
\end{equation*}
so that in particular $(X,\alpha,\mu,p)$ is a guided flow when restricted to $U$. Note that in order for $U$ to be non-empty, we need at least that $dp \neq 0$ and $X \neq 0$. We will consider the local vector field $Y : U \to TM$ defined by
\begin{equation}\label{eq:loc-companion-field}
i_Y \mu|_U = \frac{1}{\alpha(X)|_U} \alpha \wedge dp|_U.    
\end{equation}

We note for the reader that $Y$ may also be written in terms of $g$ as
\begin{equation*}
Y = \frac{1}{g(X,X)|_U} X \times \nabla p |_U
\end{equation*}
where here we recall that $\nabla p$ denotes the gradient of $p$ with respect to the metric $g$ and the cross-product, $\times$, is the one induced by the metric $g$. Equation \eqref{eq:loc-companion-field} is the preferable means of expressing $Y$ because it explicitly only depends on $(X,\alpha,\mu,p)$, making it more compatible with the notion of adapted metric.

Let $\rho : M \to \mathbb{R}$ be a positive function such that $\rho - 1$ is compactly supported in $U$. Consider the symmetric bilinear form $g^{\rho}$ on $M$ such that
\begin{equation}\label{eq:metric-awayU}
g^{\rho} = g \text{ on } M \backslash U 
\end{equation}
and, on $U$,
\begin{equation}\label{eq:metric-coefficients}
\begin{aligned}
&g^{\rho}(X,X) &=& &\alpha(X), & &g^{\rho}(Y,\nabla p) &=& &0,\\
&g^{\rho}(X,Y) &=& &0, & &g^{\rho}(Y,Y) &=& &\rho g(Y,Y),\\
&g^{\rho}(X,\nabla p) &=& &0, & &g^{\rho}(\nabla p,\nabla p) &=& &\frac{1}{\rho} g(\nabla p,\nabla p).
\end{aligned}    
\end{equation}

Now because $i_Xg = \alpha$, we have
\begin{equation*}
\alpha(\nabla p) = g(X,\nabla p) = dp(X) = 0
\end{equation*}
and so the first column of Equation \eqref{eq:metric-coefficients} implies that 
\begin{equation}\label{eq:perturbed-is-adapted}
i_Xg^{\rho} = \alpha.
\end{equation}
Moreover, observe that the frame
\begin{equation*}
(X|_U,Y|_U,\nabla p|_U)
\end{equation*}
is orthogonal for $g^{\rho}$ and that, on $U$,
\begin{equation*}
g^{\rho}(X,X) > 0, \qquad g^{\rho}(Y,Y) > 0, \qquad g^{\rho}(\nabla p,\nabla p) > 0.
\end{equation*}
Thus, $g^{\rho}$ is a metric on $M$. Also note that the frame $(X|_U,Y|_U,\nabla p|_U)$ is $g$-orthogonal. 

We now compute the induced volume element $\mu^{\rho} = \mu_{g^{\rho}}$ from the metric $g^{\rho}$ on $M$. First, we observe that
\begin{align*}
i_Y i_X \mu|_U = dp|_U.        
\end{align*}
Thus, on $U$,
\begin{equation*}
\mu(X,Y,\nabla p) = dp(\nabla p) = g(\nabla p,\nabla p) > 0.
\end{equation*}
Hence, $(X|_U,Y|_U,\nabla p|_U)$ is positively oriented. We therefore have, on $U$,
\begin{equation*}
\mu(X,Y,\nabla p) = \mu_{g}(X,Y,\nabla p) = \|X\|~\|Y\|~\|\nabla p\| = \sqrt{\alpha(X)}~\|Y\|~\|\nabla p\|
\end{equation*}
where $\|.\|$ denotes the fibre-wise norm induced by $g$ on $TM$. On the other hand, letting $\|.\|^{\rho}$ denote the fibre-wise norm induced by $g^{\rho}$ on $TM$, we have
\begin{align*}
\mu_{\rho}(X,Y,\nabla p) &= \|X\|^{\rho}~\|Y\|^{\rho}~\|\nabla p\|^{\rho}\\
&= \sqrt{\alpha(X)}~\sqrt{\rho}\|Y\|~\sqrt{\rho^{-1}}\|\nabla p\|\\
&= \sqrt{\alpha(X)}~\|Y\|~\|\nabla p\|.
\end{align*}
It follows that
\begin{equation}\label{eq:modified-volume}
\mu^{\rho} = \mu,
\end{equation}
and thus for a positive smooth $\rho : M \to \RR$ with $\rho-1$ compactly supported in $U$, the metric $g^{\rho}$ is adapted to $(X,\alpha,\mu)$ by equations \eqref{eq:perturbed-is-adapted} and \eqref{eq:modified-volume}. We will use these perturbed adapted metrics for varying $\rho$ to establish the following proposition, which will be the key step in the proof of Theorem \ref{thm:introalphafixed1} and Theorem \ref{thm:introalphafixed2}.

\begin{proposition}\label{prop:for-intro-alpha-fixed-1-and-2}
Let $M$ be an oriented 3-manifold with (possibly empty) boundary. Let $g_0$ be a metric on $M$ and $\mu = \mu_{g_0}$. Let $X$ be a vector field on $M$ satisfying Equation \eqref{eq:nearly-Euler} on $(M,g_0)$ for some $p \in C^{\infty}(M)$. Suppose that $S$ is an embedded surface in $M$ such that $p$ is constant and regular on $S$ with $X$ tangent to $\partial S$. Set $\alpha = i_X g_0$. Then, the following hold.
\begin{enumerate}
    \item Assume $X|_S \neq 0$ and let $V$ be a neighbourhood of $S$ such that the level sets of $p|_V$ are compact. Then, there is a $C^0$-open and $C^{\infty}$-dense subset of $\cU \subset \cM_{(X,\alpha,\mu)}(M)$ such that for each $g \in \cU$, there are no non-trivial Killing fields $K$ on $(M,g)$ with $L_K p|_V = 0$ and with $K$ being tangent to $\partial S$.
    \item Assume $X|_S$ is nowhere-vanishing. Then, there is a $C^2$-open and $C^{\infty}$-dense subset of $\cU \subset \cM_{(X,\alpha,\mu)}(M)$ such that for each $g \in \cU$, there are no non-trivial Killing fields $K$ on $(M,g)$ with $K$ tangent to $S$ and $\partial S$.
\end{enumerate}
\end{proposition}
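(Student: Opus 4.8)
The plan is to treat both parts by restricting a hypothetical Killing field $K$ to the surface $S$, showing that $K_S=i^*K$ must preserve a scalar function on $S$ whose generic behaviour is incompatible with a non-vanishing Killing field; the two parts differ only in which scalar is used and, correspondingly, in the topology controlling openness. For part (i), I first dispose of the case where $X$ has a zero on $S$: since $X|_S\neq0$, Proposition \ref{prop:KcommX}(ii) then forces $K=0$ for \emph{every} adapted metric, so $\cU$ may be taken to be all of $\cM_{(X,\alpha,\mu)}(M)$. Assume then that $X$ is nowhere zero on $S$; shrinking $V$, the level sets of $p$ in $V$ foliate a neighbourhood by compact surfaces diffeomorphic to $S$ on which $X$ is nowhere zero. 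The hypothesis $L_Kp|_V=0$ makes $K$ tangent to each leaf $S'$, so $K_{S'}$ is Killing on $(S',i^*g)$ by Lemma \ref{lem:vanishingonsurface}(i); since $\omega_{S'}=i_{S'}^*\alpha=X_{S'}^{\flat}$ is a Neumann $P$-harmonic $1$-form (Lemma \ref{lemma:XisP-harmonic}) preserved by $K_{S'}$ (Lemma \ref{lem:K-pres-harmonic}, using $L_K\|\nabla p\|=0$), I obtain $[K_{S'},X_{S'}]=0$ on each leaf and hence $L_KX=0$ on the open set $V'=V\cap\{X\neq0\}$. Then $L_K\alpha=L_K\mu=L_Kp=0$ on $V'$, so Lemma \ref{lem:companionispreserved} gives $L_KY=0$, and therefore $K$ preserves $G=g(Y,Y)=\|Y\|^2$ on $V'$; in particular $K_S$ preserves $G|_S$. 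The crucial point is that $G|_S$ depends on the adapted metric $g$ only in a $C^0$ fashion, since $Y$ is fixed by $(X,\alpha,\mu,p)$ (cf.\ Theorem \ref{thm:prev-paper}).

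For part (ii) no first integral of $K$ is available, so instead I use that any isometry preserves the intrinsic Gaussian curvature: $K_S$ preserves the Gaussian curvature $\kappa$ of $(S,i^*g)$, which depends on $g$ through two derivatives and is the source of the weaker $C^2$-openness. The rigidity mechanism is then common to both cases. Writing $f$ for the relevant scalar ($G|_S$ or $\kappa$), suppose $g$ has been arranged so that $f$ attains a strict maximum inside a disk $D\subsetneq S$, i.e.\ $\max_S f>\sup_{S\setminus D}f$; this is an \emph{open} condition — in $C^0$ when $f=G|_S$ and in $C^2$ when $f=\kappa$ — and it confines the non-empty, $K_S$-invariant maximum set of $f$ to $D$. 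If $K_S\neq0$ then, since $H^1_{\text{dR}}(S)\neq\{0\}$ (Lemma \ref{lemma:XisP-harmonic}, as $X|_S\neq0$), Lemma \ref{lem:vanishinginsurface} makes $K_S$ nowhere vanishing; but a nowhere-vanishing Killing field tangent to $\partial S$ on $S$ has orbits that are either non-contractible closed curves or dense, neither of which can be confined to a disk, contradicting the invariance of the maximum set. Hence $K_S=0$, so $K|_S=0$, and finally $K=0$ on $M$ by Lemma \ref{lem:vanishingonsurface}(ii).

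It remains to realise the strict-interior-maximum condition by an adapted perturbation and to check openness and density. Using the family $g^\rho$ constructed above, which is adapted to $(X,\alpha,\mu)$ and replaces $g(Y,Y)$ by $\rho\,g(Y,Y)$ while fixing $E=1/\alpha(X)$ and the closed coframe $(\omega,\eta)$ of Proposition \ref{prop:companionflow}, I can prescribe $G^\rho|_S=\rho|_S\,g(Y,Y)|_S$ for any positive $\rho|_S$; a $C^\infty$-small generic choice makes this a Morse function with a unique global maximum, giving density for part (i), while openness is $C^0$ because $G|_S$ varies continuously in $g$ for the $C^0$ topology. For part (ii) the same family gives $i^*g^\rho=E\omega^2+(\rho G)\eta^2$, and computing the Gaussian curvature of this metric shows that by choosing $\rho|_S$ one can plant a localised curvature spike and again reach a Morse curvature with a unique maximum; openness is now $C^2$ since $\kappa$ varies continuously in $g$ for the $C^2$ topology. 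Combining, this yields the $C^0$-open, $C^\infty$-dense set in (i) and the $C^2$-open, $C^\infty$-dense set in (ii).

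The main obstacle I anticipate is twofold. In part (i) it is the propagation of $L_KX=0$ off the single surface $S$ to the neighbourhood $V'$: this needs the leafwise Neumann condition for $K_{S'}$ on every nearby leaf, which is automatic when the leaves are closed (the situation relevant to Theorem \ref{thm:introalphafixed1}) but requires extra care — e.g.\ $K$ tangent to $\partial M$ — when $\partial S\neq\emptyset$. In part (ii) the hard part is showing that the constrained perturbation $\rho\mapsto\kappa\big(E\omega^2+\rho G\eta^2\big)$ has enough freedom to produce a strict interior maximum of the curvature, i.e.\ that this second-order operator in $\rho$ is suitably surjective, and that the resulting condition is genuinely $C^2$-open.
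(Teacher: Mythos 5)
Your proposal is, in structure, the paper's own proof: the same reduction via Proposition \ref{prop:KcommX}(ii) when $X$ vanishes somewhere on $S$; the same invariant scalars, namely $g|_S(Y|_S,Y|_S)$ built from the companion field for part (i) and the intrinsic curvature of $i^*g$ for part (ii); the same genericity condition (a strict maximum over the complement of a coordinate disk, which is exactly the set $\cV$ of Lemma \ref{lem:generic-non-symmetry}); the same adapted perturbation $g^{\rho}$; and the same endgame, in which a zero of $K_S$ forces $K_S=0$ by Lemma \ref{lem:vanishinginsurface} and then $K=0$ by Lemma \ref{lem:vanishingonsurface}(ii). One difference is essentially cosmetic: where the paper's Lemma \ref{lem:generic-non-symmetry} derives the contradiction by applying Poincar\'e--Hopf to a disk bounded by a regular level circle of the invariant scalar $f$ (which needs nothing beyond $L_{K_S}f=0$ and nonvanishing of $K_S$), you argue via the orbit structure of the isometric flow (``non-contractible closed curves or dense orbits''). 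That claim is true but not free: it needs that $\mathrm{Isom}(S,i^*g)$ is compact, so that orbit closures are orbits of a torus subgroup, hence circles or open-and-closed pieces of $S$, after which Poincar\'e--Hopf rules out a circle confined to a disk. You should either supply that argument or simply use the paper's level-set version, which also applies to arbitrary nowhere-vanishing fields. (The boundary subtlety you flag in propagating $L_KX=0$ to nearby leaves when $\partial S\neq\emptyset$ is real, but it is equally present in the paper's appeal to Proposition \ref{prop:KcommX}(i), and it is vacuous in the intended applications, where the nearby leaves are closed.)

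The one genuine gap is the step you yourself flag: density in part (ii), i.e.\ that the curvature of $E\omega^2+\rho G\eta^2$ can be given a strict interior maximum by a $C^\infty$-small admissible $\rho$. Your worry about ``surjectivity of a second-order operator'' asks for more than is needed: it suffices to raise the curvature at a single point $x$ maximising $s(g)$ while leaving it unchanged outside a small disk around $x$, since the difference $s(g^\rho)-s(g)$ automatically vanishes wherever $\rho\equiv 1$. The paper does exactly this by an explicit computation: in the flowout coordinates $(u,v)$ of the commuting frame $(\tilde{X}_S,Y_S)$ one has $i^*g=\tilde{E}\,du^2+\tilde{G}\,dv^2$ and $i^*g^{\rho}=\tilde{E}\,du^2+\tilde{\rho}\,\tilde{G}\,dv^2$, and choosing $\tilde{\rho}=1+\rho_0u^2$ with $\rho_0$ a bump function equal to a small constant $c<0$ near the origin, the Gauss formula for the curvature of a diagonal metric yields $s(g^{\rho})(x)-s(g)(x)=-2c/(\tilde{E}\tilde{G})\big|_x>0$. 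With this computation inserted, your proof of part (ii) closes; everything else in the proposal matches the paper.
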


In Section \ref{sec:proof-of-main-results}, we will see that part (i) of Proposition \ref{prop:for-intro-alpha-fixed-1-and-2} implies Theorem \ref{thm:introalphafixed1} and part (ii) of the proposition implies Theorem \ref{thm:introalphafixed2}. The remainder of this section is dedicated to proving Proposition \ref{prop:for-intro-alpha-fixed-1-and-2}. 

Both parts (i) and (ii) of Proposition \ref{prop:for-intro-alpha-fixed-1-and-2} rely on a common lemma. We define the following basic notion.

\begin{definition}
Let $S$ be a surface and let $x \in S$. Then, a neighbourhood $W$ of $x$ is said to be a \emph{proper coordinate neighbourhood} of $x$ if $W \subsetneq S$ and there exists a chart $(W,\varphi)$ such that $\varphi(x) = 0$ and for some $\delta > 0$ it holds that:
\begin{enumerate}
    \item if $x \in \partial S$, $\varphi(W) = B_{\delta}(0) \cap \HH^2$ 
    \item and otherwise, $\varphi(W) = B_{\delta}(0)$
\end{enumerate}
where $B_{\delta}(0)$ is a ball centered at zero of radius $\delta$ and $\HH$ is the upper half-plane in $\mathbb{R}^2$.
\end{definition}

\begin{lemma}\label{lem:generic-non-symmetry}
There exists a $C^\infty$-dense and $C^0$-open set $\cV \subset C^\infty(S)$ such that, for any $f\in \cV$ and any non-vanishing vector field $X$ on $S$ tangent to $\partial S$, $L_X f \neq 0$.
\end{lemma}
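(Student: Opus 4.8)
The goal is to produce a set $\cV$ of functions on the surface $S$ that are, in a $C^0$-open and $C^\infty$-dense sense, ``non-invariant'' under every nowhere-vanishing vector field tangent to $\partial S$. The natural candidate is the set of functions whose critical points are all isolated and non-degenerate (Morse functions), possibly augmented by a condition controlling behaviour at $\partial S$. The key heuristic is that if $L_X f = 0$ for a non-vanishing $X$, then $f$ is constant along the flow lines of $X$, and in particular $X$ is tangent to every regular level set of $f$; at a critical point of $f$ the constraint $L_X f = 0$ is automatic, but the existence of a non-vanishing $X$ preserving $f$ forces strong restrictions on the index/structure of the critical points.

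\medskip

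The plan is as follows. First I would take $\cV$ to be the set of Morse functions on $S$ (with an appropriate boundary-transversality condition, e.g.\ $f|_{\partial S}$ also Morse and no critical points of $f$ on $\partial S$), and recall the standard fact that this set is $C^\infty$-dense and $C^2$-open; to get $C^0$-openness of the final property one works not with the Morse condition itself but with the consequence I extract below, which is stable under $C^0$-small perturbations. Suppose now, for contradiction, that $f \in \cV$ and $X$ is a nowhere-vanishing vector field tangent to $\partial S$ with $L_X f = 0$. Then $f$ is a first integral of $X$, so $X$ is tangent to each regular level set of $f$ and each critical point $x_0$ of $f$ is a zero of $df$; since $X$ does not vanish, the integral curve through $x_0$ lies in the level set $f^{-1}(f(x_0))$. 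The decisive point is to examine a non-degenerate critical point $x_0$ of Morse index $1$ (a saddle): near $x_0$ the level set $f^{-1}(f(x_0))$ is a pair of transverse arcs, and a non-vanishing vector field tangent to this singular level set and to the regular nearby levels cannot exist — the flow would have to follow the local ``$X$-shaped'' level configuration continuously, which is topologically impossible for a nowhere-zero field. Equivalently, one shows that $X$ nowhere-vanishing with $L_X f=0$ forces $f$ to have no critical points of index $1$, and more strongly that the only Morse functions admitting such an $X$ are those whose critical points are all extrema organised so that $S$ fibers over $\SS^1$ or an interval with $X$ transverse to $df$; a Morse function on a compact surface with boundary generically has saddles, giving the contradiction.

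\medskip

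Concretely, I would argue locally at a critical point. Pick a critical point $x_0$ of $f$; by compactness of $S$ and $L_X f = 0$, the value $f(x_0)$ is attained and $X$ is tangent to the (possibly singular) level set through $x_0$. At a non-degenerate saddle, Morse coordinates $(u,v)$ give $f = f(x_0) + u^2 - v^2$ (interior case) so that the level set $f=f(x_0)$ is $\{uv\text{-type crossing}\}$, and tangency of the non-vanishing $X$ to all levels forces $X$ to be a nonzero multiple of the Hamiltonian field of $f$, namely $X \propto (\partial_v f)\,\partial_u - (\partial_u f)\,\partial_v = -2v\,\partial_u - 2u\,\partial_v$, which vanishes at $x_0$ — contradicting $X|_{x_0}\neq 0$. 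For $x_0$ a non-degenerate extremum the same computation shows $X$ must vanish there. Thus a nowhere-vanishing $X$ with $L_X f = 0$ is incompatible with $f$ having any non-degenerate critical point in the interior, and an analogous boundary computation (using that $X$ is tangent to $\partial S$, so at a boundary critical point $X$ is forced to vanish as well) rules out boundary critical points. Hence such an $f$ would have no critical points at all on the compact surface-with-boundary $S$, which is impossible unless $S$ fibers appropriately; restricting $\cV$ to Morse functions that do possess a critical point (a $C^2$-open, $C^\infty$-dense condition, automatic on any compact $S$ since $f$ attains a max) completes the contradiction.

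\medskip

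The main obstacle I anticipate is twofold. First, obtaining $C^0$-openness (rather than merely $C^2$-openness) of the final property $L_X f \neq 0$: the Morse condition is only $C^2$-open, so I must phrase the usable property as the existence of a non-degenerate critical point of a fixed index together with a quantitative lower bound on $|df|$ away from a small neighbourhood of that point, a feature that survives $C^0$-small perturbations; the cleanest route is to fix a single non-degenerate critical point $x_0$ of $f$ and note that ``$f$ has a strict local extremum or a topologically-detectable saddle at a definite location'' is a $C^0$-robust feature, and that the obstruction to a nowhere-zero tangent field comes from the local index of the level set, which is a $C^0$ invariant. Second, the boundary analysis: I must handle critical points of $f$ lying on $\partial S$ and ensure the tangency constraint $X\!\cdot\! n = 0$ still forces $X$ to vanish there, which requires the half-space Morse normal form and care with the outward normal; I expect this boundary bookkeeping — and the precise packaging of the $C^0$-open condition — to be the genuinely delicate part, whereas the interior saddle argument is routine.
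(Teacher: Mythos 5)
Your strategy (Morse functions, plus the observation that a vector field preserving $f$ must vanish at its critical points, and cannot be non-vanishing along a boundary circle on which $f$ is non-constant) can be pushed through at the $C^2$ level, but it does not prove the lemma as stated, and the gap is exactly where you flagged it: $C^0$-openness. Your proposed repair rests on a false premise. Neither ``$f$ is Morse'' nor ``$f$ has a non-degenerate critical point of a fixed index'' is a $C^0$-open condition: non-degeneracy involves two derivatives and is destroyed by $C^0$-small perturbations. The only feature that does survive a $C^0$-small perturbation is the existence of a local maximum \emph{trapped} in a region where $f$ strictly exceeds its values on the complement --- and that maximum may be degenerate, with a critical level that is not a transverse crossing or a point, so your Morse-coordinate analysis no longer applies to it. The paper's proof is built around precisely this point: it takes $\cV$ to be the set of $f$ admitting a point $x$ and a proper coordinate neighbourhood $W \subsetneq S$ with $f(x) > f(x')$ for all $x' \in S \setminus W$. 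This is $C^0$-open (it is the union over $W$ of the sets where the gap functional $N_W(f) = \max_S f - \max_{S\setminus W} f$ is positive) and $C^\infty$-dense (add a small bump at a maximizer). The contradiction is then derived with no non-degeneracy whatsoever: if $X$ is non-vanishing, tangent to $\partial S$, and $L_X f = 0$, then $f$ is constant on $\partial S$; Sard's theorem gives a regular value $y^*$ strictly between $\max_{S\setminus W} f$ and $\max_S f$ which is not attained on $\partial S$; the level set $f^{-1}(y^*)$ is then a compact $1$-manifold contained in $\Int W$ whose circle components bound disks inside $W$; $X$ is tangent to such a circle, and the Poincar\'e--Hopf theorem forces a zero of $X$ in the disk. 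In other words, the $C^0$-robust obstruction is an Euler-characteristic argument on a disk bounded by a regular level circle, not the local model at a non-degenerate critical point; this is the missing idea in your proposal.

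Two secondary problems in your write-up, both fixable but worth noting. First, the saddle computation is too quick: at the critical point $x_0$ itself the constraint $df(X) = 0$ is vacuous, so ``$X$ is a multiple of the Hamiltonian field of $f$, which vanishes at $x_0$'' does not by itself give $X|_{x_0} = 0$; the proportionality holds only on the punctured neighbourhood, and one needs a limiting argument (the direction field of the Hamiltonian field has no limit at $x_0$), or, more cleanly, a flow-box argument: if $X|_{x_0} \neq 0$, then in flow-box coordinates $f$ is independent of the flow coordinate, so the critical set of $f$ contains a curve through $x_0$, contradicting the isolation of non-degenerate critical points. Second, your claim that a critical point exists ``automatically on any compact $S$ since $f$ attains a max'' is false when $\partial S \neq \emptyset$: on the annulus $[0,1]\times \SS^1$ the function $f(t,\theta)=t$ has no critical points at all (and is indeed preserved by the non-vanishing field $\partial_\theta$). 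In the boundary case your contradiction must instead come from the boundary circles, where the condition that $f|_{\partial S}$ be Morse forbids $f$ from being constant there; your packaging conflates these two mechanisms.
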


\begin{proof}
Let $\cV$ denote the set of smooth functions $f \in C^{\infty}(S)$ for which there exists a point $x \in S$ and a proper coordinate neighbourhood $W$ of $x$ such that $f(x) > f(x')$ for all $x' \in S\backslash W$. First, we establish that $\cV$ is $C^0$-open in $C^\infty(S)$. Let $\text{CoN}(S)$ denote the set of proper coordinate neighbourhoods $S$. For any such coordinate neighbourhood $W \in \text{CoN}(S)$, we have that $S\backslash W$ is a compact (and non-empty) subset of $S$ so we may consider the function $N_W : C^{\infty}(S) \to \RR$ given by
\begin{equation*}
N_W(f) = \max_{x \in S} f(x) - \max_{x' \in S \backslash W}f(x').
\end{equation*}
Note that $N_W$ is continuous with respect to the $C^0$-topology on $C^{\infty}(S)$ (and the usual topology on $\RR$). We may write
\begin{equation*}
\cV = \cup_{W \in \text{CoN}(S)}\{f \in C^{\infty}(S) : N_W f > 0\}
\end{equation*}
which is open as the union of open sets. 

Now we prove $C^{\infty}$-density. Let $f \in C^{\infty}(S)$ and let $x \in S$ be a point that maximises $f$. Then, let $W$ be a coordinate neighbourhood of $x$ and $\rho \in C^{\infty}(S)$ be compactly supported in $W$ such that $\rho(x) > 0$. Then, clearly $f + \rho \in \cV$. In particular, since $\rho$ may be taken $C^\infty$-close to $0$, this proves $C^{\infty}$-density.

Lastly, we prove the statement about vector fields. Let $f \in \cV$ and suppose for the sake of contradiction that there exists a nowhere-vanishing vector field $X$ on $S$ tangent to $\partial S$ such that $L_X f = 0$. Because $X$ is tangent to $\partial S$ and nowhere-vanishing there, we have that $f$ is constant on $\partial S$. Writing
\begin{equation*}
y = \max_{x \in S} f(x), \qquad y' = \max_{x' \in S \backslash V}f(x'),    
\end{equation*}
we have $y' < y$ and $[y',y] \subset f(S)$ by continuity. Hence, $(y',y]\subset f(V)$. By Sard's theorem, there exists a regular value $y^*$ of $f$ such that $y' < y^* < y$ where $y^*$ is not a value of $f$ attained on $\partial S$. Now, on the one hand, $f^{-1}(y^*)$ is a compact subset of $S$, on the other hand, $f^{-1}(y^*) \subset \Int W$ where $\Int W$ denotes the interior of $W$ as an open submanifold of $S$. Thus, $f^{-1}(y^*)$ is a compact embedded 1-dimensional submanifold of $\Int W$. The connected components of $f^{-1}(y^*)$ are all circles, which bound smooth disks in $\Int W$ since $W$ is a proper coordinate neighbourhood. Let $D$ be one of these disks, and assume by contradiction that $L_Xf=0$. Since the boundary of $D$ is a regular connected component of a level set of $f$, $X$ is tangent to it. The Poincar\'e-Hopf Theorem implies that $X$ admits a zero in $D$, reaching a contradiction with the fact that $X$ is non-vanishing.
\end{proof}
\begin{remark}
    One can choose a larger set than $\cV$ in the previous lemma, namely those functions $f \in C^\infty(S)$ for which there exist two open disks $W_1,W_2$ with $\overline{W_1} \subsetneq W_2$ such that there is a point $x\in W_1$ for which $f(x)>f(x')$ for every $x'\in W_2\setminus W_1$.
\end{remark}
We now prove Proposition \ref{prop:for-intro-alpha-fixed-1-and-2}.

\begin{proof}[Proof of \ref{prop:for-intro-alpha-fixed-1-and-2}]
The proof of parts (i) and (ii) are similar. Both proofs rely on the subset $\cV$ of $C^{\infty}(S)$ considered in Lemma \ref{lem:generic-non-symmetry}. Another similarity is that the assumptions in both parts include $X|_S \neq 0$. By Lemma \ref{lemma:XisP-harmonic}, we therefore have in the setting of either (i) or (ii) that $H_{\text{dR}}^1(S) \neq \{0\}$. In particular, if one has a metric $g$ on $M$ and $K$ is a Killing field on $M$ tangent to $S$ and $\partial S$, the vector field $K_S$ which is related to $K$ by the inclusion $i : S \subset M$, is a Killing field on $(S,i^*g)$ tangent to $\partial S$ by Lemma \ref{lem:vanishingonsurface}. Therefore, if $K_S|_x = 0$ for some $x \in S$, then $K_S = 0$ by Lemma \ref{lem:vanishingonsurface} and thus, $K$ vanishes in all $M$.\\

\textbf{Proof of part (i).} Firstly, we note that it immediately follows from Proposition \ref{prop:KcommX} that if $X|_x = 0$ for some $x \in S$, then part (i) is immediately true with $\cU = \cM_{(X,\alpha,\mu)}(M)$.

We henceforth assume that $X|_S$ is nowhere-vanishing so that $S \subset U$ where $U = \{x \in M : X|_x \neq 0\} \cap V$. Consider the local companion vector field $Y : U \to TM$ defined as in Equation \eqref{eq:loc-companion-field}. Define the function $N : \cM_{(X,\alpha,\mu)}(M) \to C^{\infty}(S)$ by
\begin{equation*}
N(g) = g|_S(Y|_S,Y|_S).    
\end{equation*}
Recall the local vector field $\tilde{X} : U \to TM$ given by
\begin{equation*}
\tilde{X} = \frac{1}{\alpha(X)|_U} X|_U.
\end{equation*}
Both $\tilde{X}$ and $Y$ are tangent to $S$ and induce vector fields $\tilde{X}_S$ and $Y_S$. We consider the set $\cV$ as in the above Lemma \ref{lem:generic-non-symmetry} and define
\begin{equation*}
\cU = N^{-1}(\cV).
\end{equation*}
The proof will be complete once the following properties of $\cU$ can be shown.
\begin{enumerate}
    \item[1)] For each $g \in \cU$, there are no non-trivial Killing fields $K$ on $(M,g)$ such that $L_K p = 0$ with $K$ tangent to $\partial S$,
    \item[2)] $\cU$ is $C^0$-open in $\cM_{(X,\alpha,\mu)}(M)$, and
    \item[3)] $\cU$ is $C^{\infty}$-dense in $\cM_{(X,\alpha,\mu)}(M)$.
\end{enumerate}
For the first property, if $g \in \cU$, then let $K$ be a Killing field on $(M,g)$ such that $L_K p|_V = 0$ with $K$ tangent to $\partial S$. Summarising with the aid of Remark \ref{rem:killing-on-adapted-metric} and Proposition \ref{prop:KcommX}, we have the invariance 
\begin{equation*}
L_K \alpha|_V = 0, \qquad L_K \mu = 0, \qquad L_K p|_V = 0.
\end{equation*}
Thus, by Lemma \ref{lem:companionispreserved},
\begin{equation*}
L_K Y|_V = 0.    
\end{equation*}
On the other hand, $K$ is a Killing field on $M$ and thus, restricting to $S$, we have that
\begin{equation*}
L_{K_S} N(g) = 0
\end{equation*}
where $K_S$ is the vector field related to $K$ via the inclusion $i : S \subset M$. However, because $N(g) \in \cV$, Lemma \ref{lem:generic-non-symmetry} implies that $K_S$ must have a zero. However, by Lemma \ref{lem:vanishinginsurface}, because $K_S$ is Killing on $(S,i^*g)$, this implies that $K_S = 0$ and therefore $K|_S = 0$. But then, from Lemma \ref{lem:vanishingonsurface}, it follows that $K = 0$ on $M$. 

To deduce the second property, observe that the functional $N : \cM_{(X,\alpha,\mu)}(M) \to C^{\infty}(S)$ is continuous with respect to the $C^0$-topologies on $\cM_{(X,\alpha,\mu)}(M)$ and $C^{\infty}(S)$ and therefore, by Lemma \ref{lem:generic-non-symmetry}, $\cU = N^{-1}(\cV)$ is $C^0$-open in $\cM_{(X,\alpha,\mu)}(M)$. 

Finally, for the third property, fix a metric $g \in \cM_{(X,\alpha,\mu)}(M)$. Then, $N(g) \in C^{\infty}(S)$ so by Lemma \ref{lem:generic-non-symmetry} there exists a positive function $\rho_S \in C^{\infty}(S)$ that is $C^{\infty}$-close to $1$ (in $C^{\infty}(S)$) such that $\rho_S \cdot N(g) \in \cV$. We may extend $\rho_S$ to a positive function $\rho \in C^{\infty}(M)$ such that $\rho-1$ is compactly supported in $U$ whereby $\rho$ is $C^{\infty}$-close to $1$ (in $C^\infty(M)$). We then consider the metric $g^{\rho} \in \cM_{(X,\alpha,\mu)}(M)$ defined as in Equations \eqref{eq:metric-awayU} and \eqref{eq:metric-coefficients}, which is $C^{\infty}$-close to $g$ and satisfies
\begin{equation*}
N(g^{\rho}) = \rho_S\cdot N(g) \in \cV  
\end{equation*}
so that $g^{\rho} \in \cU$.

We now move to part (ii) of the proposition, whose proof has the same structure as that of part (i).\\

\textbf{Proof of part (ii).}
We consider the function $s : \cM_{(X,\alpha,\mu)}(M) \to C^{\infty}(S)$ where for each $g \in \cM_{(X,\alpha,\mu)}(M)$, $s(g)$ is the scalar curvature of the metric $i^*g$ on $S$, where $i : S \subset M$ is the inclusion. Let
\begin{equation*}
\cU = s^{-1}(\cV).    
\end{equation*}
Our proof will be complete once we prove the following properties of $\cU$.
\begin{enumerate} 
    \item[1)] For each $g \in \cU$, there are no non-trivial Killing fields $K$ on $(M,g)$ such that $K$ is tangent to $S$ and its boundary,
    \item[2)] $\cU$ is $C^2$-open in $\cM_{(X,\alpha,\mu)}(M)$, and
    \item[3)] $\cU$ is $C^{\infty}$-dense in $\cM_{(X,\alpha,\mu)}(M)$.
\end{enumerate}

For property 1), let $g \in \cU$ and let $K$ be a Killing field on $(M,g)$ such that $K$ tangent $S$ to $\partial S$. Considering the vector field $K_S$ related to $K$ by the inclusion $i : S \subset M$, writing $\kappa = s(g)$, we have the invariance
\begin{equation*}
d\kappa(K_S) = 0.
\end{equation*}
However, $\kappa = s(g) \in \cV$. Thus, by Lemma \ref{lem:generic-non-symmetry}, it must be that $K_S$ has a zero on $S$. But, as noted earlier, $H_{\text{dR}}^{1}(S) \neq \{0\}$ and so $K_S = 0$ by Lemma \ref{lem:vanishinginsurface}. Hence, $K|_S = 0$. Lemma \ref{lem:vanishingonsurface} then implies that $K = 0$.

For the second property, note that $\cV$ is $C^0$-open in $C^{\infty}(S)$ by Lemma \ref{lem:generic-non-symmetry}. On the other hand, the map $s : \cM_{(X,\alpha,\mu)}(M) \to C^{\infty}(S)$ is continuous with respect to the $C^2$ topologies on $\cM_{(X,\alpha,\mu)}(M)$ and $C^0$ topology on $C^{\infty}(S)$, since the scalar curvature depends continuously on the components of $g$, its Christoffel symbols and their derivatives. Hence, $\cU$ is $C^2$-open in $\cM_{(X,\alpha,\mu)}(M)$.

Fix some $g \in \cM_{(X,\alpha,\mu)}(M)$, and let us prove property 3). Choose a point $x \in S$ which maximises $s(g)$ and let $\rho \in C^{\infty}(M)$ be a smooth and positive function such that $\rho-1$ is compactly supported in $M$, whereby $(\rho-1)|_S$ is compactly supported in a proper coordinate neighbourhood $W$ of $x$. By Equation \eqref{eq:metric-coefficients}, we have that the difference
\begin{equation*}
s(g^\rho)-s(g)    
\end{equation*}
is zero outside $W$. We will be done by Lemma \ref{lem:generic-non-symmetry} if there exists a function $\rho \in C^\infty(M)$ that is arbitrarily $C^\infty$ close to $1$ so that the difference
\begin{equation*}
s(g^\rho)-s(g)    
\end{equation*}
is positive at $x$, then for such $\rho$, $s(g^\rho) \in \cV$ so that $g^\rho \in \cU$. We now show this can be done. Henceforth, and as in the proof of Proposition \ref{prop:for-intro-alpha-fixed-1-and-2} (i), we will consider the vector fields $\tilde{X},Y : U \to TM$ and in particular the consequent vector fields $\tilde{X}_S$ and $Y_S$. Setting $E = g(X,X)|_S$, $G = g|_S(Y|_S,Y|_S)$, $g_S = i^*g$, and $g_S^\rho = i^*g^{\rho}$, by Proposition \ref{prop:companionflow}, we have
\begin{align*}
g_S &= E \omega^2 + G \eta^2\\
g^{\rho}_S &= E \omega^2 + \rho|_S G \eta^2
\end{align*}
where $(\omega,\eta)$ are the dual co-frame to $(\tilde{X}_S,Y_S)$. We wish to compute the scalar curvatures near $x$ of $g_S$ and $g^{\rho}_S$. The following claim will aid in this computation.

\begin{claim} If $x \in \partial S$ (or $x \in \Int S$), then there exists $\epsilon > 0$ and a diffeomorphism onto its open image 
\begin{equation*}
H : (-\epsilon,\epsilon) \times I \to S
\end{equation*}
where $I = [0,\epsilon)$ or $I = (-\epsilon,0]$ (respectively $I = (-\epsilon,\epsilon)$) where for each $s \in (-\epsilon,\epsilon)$, $t \mapsto H(s,t)$ is an integral curve of $Y_S$ initialised at $\Psi_s(x)$, where $\Psi$ denotes the global flow of $\tilde{X}_S$.
\end{claim}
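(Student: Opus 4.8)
The plan is to build the chart $H$ directly out of the two flows. Since $X$ is tangent to $\partial S$, so is $\tilde{X}_S = X_S/\alpha(X)$, and hence its flow $\Psi$ (complete by compactness of $S$, since tangency to $\partial S$ keeps orbits inside $S$) leaves $\partial S$ invariant; for the purely local statement one in any case only needs the short-time flow. By Proposition \ref{prop:companionflow}, $\tilde{X}_S$ and $Y_S$ are everywhere linearly independent and so span $T_xS$ at every point; at a boundary point $x\in\partial S$ the nonvanishing field $\tilde{X}_S$ already spans the line $T_x(\partial S)$, which forces $Y_S$ to be transverse to $\partial S$. This transversality is the structural fact that makes the boundary case work and dictates the two possibilities $I=[0,\epsilon)$ and $I=(-\epsilon,0]$.

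Concretely, I would set $H(s,t)=\phi^{Y_S}_t(\Psi_s(x))$, where $\phi^{Y_S}$ denotes the (short-time) flow of $Y_S$. For an interior point the flow is defined on a symmetric interval and I take $I=(-\epsilon,\epsilon)$. For a boundary point, $\Psi_s(x)$ remains in $\partial S$, and since $Y_S$ is transverse to $\partial S$ I orient $t$ so that $Y_S$ points into $\Int S$; then $\phi^{Y_S}_t$ is defined and lands in $S$ for $t\in[0,\epsilon)$ (or $t\in(-\epsilon,0]$ if $Y_S$ points outward), which fixes $I$. The integral-curve requirement is then immediate from the definition of the flow: $H(s,0)=\Psi_s(x)$ and $\partial_t H(s,t)=Y_S(H(s,t))$.

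It remains to check that $H$ is a diffeomorphism onto an open image. Differentiating at the origin gives $\partial_s H|_{(0,0)}=\tilde{X}_S(x)$ and $\partial_t H|_{(0,0)}=Y_S(x)$, which are linearly independent, so $dH_{(0,0)}$ is an isomorphism; applying the inverse function theorem (in its half-space version when $x\in\partial S$) yields a local diffeomorphism, and shrinking $\epsilon$ makes $H$ injective, hence a diffeomorphism onto an open neighbourhood of $x$ in $S$. I expect the only genuinely delicate point to be the boundary bookkeeping — verifying transversality of $Y_S$, choosing the inward flow direction consistently for all nearby $s$ (an open condition on $\partial S$), and invoking the manifold-with-boundary form of the inverse function theorem — whereas the interior case is the standard flow-box construction for a pair of commuting fields.

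Finally, I would record the payoff of the commutation relation $[\tilde{X}_S,Y_S]=0$ from Proposition \ref{prop:companionflow}: the flows of $\tilde{X}_S$ and $Y_S$ commute, so $\partial_s H=\tilde{X}_S\circ H$ holds throughout the chart and not merely at the origin. Consequently $\tilde{X}_S=\partial_s$ and $Y_S=\partial_t$ in the coordinates $(s,t)$, and their dual coframe satisfies $\omega=ds$, $\eta=dt$. This is precisely what puts $g_S=E\,\omega^2+G\,\eta^2$ into the diagonal form $E\,ds^2+G\,dt^2$ that the subsequent scalar-curvature computation relies upon.
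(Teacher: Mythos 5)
Your proposal is correct and is essentially the paper's argument: the map $H(s,t)=\phi^{Y_S}_t(\Psi_s(x))$ you construct is exactly the flowout of the $\tilde{X}_S$-orbit segment along $Y_S$, with the same key input (transversality of $Y_S$ to $\partial S$, which you in fact justify more explicitly than the paper, via linear independence with $\tilde{X}_S$). The only cosmetic difference is that the paper cites the Flowout and Boundary Flowout Theorems of Lee where you re-derive them directly from the inverse function theorem.
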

\begin{proof}[Proof of claim] Recall that $\tilde{X}_S$ is tangent to $\partial S$ and so by compactness of $S$, the vector field $\tilde{X}_S$ has a globally defined flow. Now, suppose $x \in \partial S$. We have that $\partial S$ is an embedded submanifold of $S$ and because the image of the integral curve $s \mapsto \Psi_s(x)$ is contained in $\partial S$, the inverse function theorem (in $\partial S$) implies that for any sufficiently small $\epsilon > 0$, the resulting curve $\gamma : (-\epsilon,\epsilon)\to \partial S$ is a diffeomorphism onto its open image in $\partial S$. If $x \in \Int S$, then the image of the integral curve $s \mapsto \Psi_s(x)$ is contained in $\Int S$ and it is standard that for any sufficiently small $\epsilon > 0$, the resulting curve $\gamma : (-\epsilon,\epsilon)\to \Int S$ is an embedding. Then, applying the Boundary Flowout Theorem when $x \in \partial S$ (since $Y_S$ is transverse to $\partial S$) and the standard Flowout Theorem when $x \in \Int S$, we obtain our desired map $H$.
\end{proof}

With the claim proven, note in addition that because $\tilde{X}_S$ and $Y_S$ commute, $\omega$ and $\eta$ are closed 1-forms. It is easy to check that in fact $H^*\omega = du$ and $H^*\eta = dv$ where $u,v : (-\epsilon,\epsilon) \times I \to \RR$ are the standard Cartesian coordinates. Hence, we can write
\begin{align*}
H^*g_S &= \tilde{E} du^2 + \tilde{G} dv^2\\
H^*g^{\rho}_S &= \tilde{E} du^2 + \tilde{\rho} \tilde{G}dv^2
\end{align*}
where $\tilde{E} = E \circ H$, $\tilde{G} = G \circ H$, and $\tilde{\rho} = (\rho|_S) \circ H$. By isometric invariance of scalar curvature, we have the equalities
\begin{align*}
&s(g)(x)& &=& &\cS(g_S)(x)& &=& &\cS(H^*g_S)(0)&\\
&s(g^\rho)(x)& &=& &\cS(g_S)(x)& &=& &\cS(H^*g^{\rho}_S)(0)&
\end{align*}
where $\cS(g')$ denotes the scalar curvature of a metric $g'$ on a manifold with boundary. By the Gauss formula for scalar curvature, we obtain
\begin{equation*}
\cS(H^*g
) = -\frac{1}{\tilde{E}\tilde{G}} \left(\tilde{E}_{vv} + \tilde{G}_{uu}-\frac{1}{2}\left(\frac{\tilde{E}_u \tilde{G}_u + \tilde{E}_v^2}{\tilde{E}}+\frac{\tilde{E}_v \tilde{G}_v+\tilde{G}^2_u}{\tilde{G}}\right) \right)
\end{equation*}
on the other hand,
\begin{align*}
\cS(H^*g^{\rho}_S) &= -\frac{1}{\tilde{E}(\tilde{\rho}\tilde{G})} \left(\tilde{E}_{vv} + (\tilde{\rho}\tilde{G})_{uu}-\frac{1}{2}\left(\frac{\tilde{E}_u (\tilde{\rho}\tilde{G})_u + \tilde{E}_v^2}{\tilde{E}}+\frac{\tilde{E}_v (\tilde{\rho}\tilde{G})_v+(\tilde{\rho}\tilde{G})^2_u}{\tilde{\rho}\tilde{G}}\right) \right)\\
&= -\frac{1}{\tilde{E}\tilde{G}} \left(\frac{\tilde{E}_{vv} + (\tilde{\rho}\tilde{G})_{uu}}{\tilde{\rho}}-\frac{1}{2}\left(\frac{\tilde{E}_u (\tilde{\rho}\tilde{G})_u + \tilde{E}_v^2}{\tilde{\rho}\tilde{E}}+\frac{\tilde{E}_v (\tilde{\rho}\tilde{G})_v+(\tilde{\rho}\tilde{G})^2_u}{\tilde{\rho}^2\tilde{G}}\right) \right).
\end{align*}
Now, we may choose $\rho$ so that $\tilde{\rho} = 1 + \rho_0 u^2$ where $\rho_0$ is a bump function which equals a (small) constant $c$ and compactly supported in a neighbourhood $W_0 \subset H^{-1}(W)$ of $0$. Then, $\tilde{\rho}$ satisfies
\begin{equation*}
\tilde{\rho}(0) = 1, \qquad \tilde{\rho}_u(0) = 0 = \tilde{\rho}_v(0), \qquad \tilde{\rho}_{uu}(0) = 2c.
\end{equation*}
Hence, at $0$, we have
\begin{align*}
\cS(H^*g^{\rho}_S)(0) &= -\frac{1}{\tilde{E}\tilde{G}} \left(\tilde{E}_{vv} + \tilde{G}_{uu} + 2c\tilde{G}_{uu} -\frac{1}{2}\left(\frac{\tilde{E}_u \tilde{G}_u + \tilde{E}_v^2}{\tilde{E}}+\frac{\tilde{E}_v \tilde{G}_v +\tilde{G}^2_u}{\tilde{G}}\right) \right)\bigg|_0.
\end{align*}
In particular,
\begin{equation*}
\cS(H^*g^{\rho}_S)(0) - \cS(H^*g_S)(0) = \frac{-2c}{\tilde{E}\tilde{G}|_0}.
\end{equation*}
and thus
\begin{equation*}
s(g^{\rho})(x) - s(g)(x) = \frac{-2c}{\tilde{E}\tilde{G}|_0} > 0
\end{equation*}
if $c < 0$. This proves (iii) and in turn, proves Proposition \ref{prop:for-intro-alpha-fixed-1-and-2} (ii). 
\end{proof}

We now are in a position to prove Theorems \ref{thm:introalphafixed1} and \ref{thm:introalphafixed2}.

\subsection{Proof of Theorems \ref{thm:introalphafixed1} and \ref{thm:introalphafixed2}}\label{sec:proof-of-main-results}

Theorem \ref{thm:introalphafixed1} is an application of Proposition \ref{prop:for-intro-alpha-fixed-1-and-2} (i).

\begin{proof}[Proof of Theorem \ref{thm:introalphafixed1}]
We first recall the setting. Assume $M$ is a compact connected oriented 3-manifold with boundary. Fix a metric $g_0$ on $M$ and set $\mu = \mu_{g_0}$. Let $(X,p)$ be an MHD equilibrium on $(M,g_0)$ with pressure function $p$ which is constant on $\partial M$ but with $\nabla p \neq 0$ on $M$. By Sard's Theorem, because $p$ is constant on $\partial M$ and $dp \neq 0$, there exists a closed regular level set of $p$ in $\Int M$. Let $S$ be a connected component of this regular level set.

Now let $g \in \cM_{(X,\alpha,\mu)}(M)$ be an adapted metric. Let $K$ be a Killing field on $(M,g)$. Suppose that $L_K X = 0$. Then, because $p$ is constant on $\partial M$ and $M$ is compact, $p$ has a critical point in $M$. Thus, by Lemma \ref{lem:tangenttobernoulli}, $L_K p = 0$. Moreover, because $\partial S = \emptyset$, $K$ is tangent to $\partial S$. Therefore if we show that there are no non-trivial Killing fields $K$ on $(M,g)$ such that $L_K p = 0$ with $K$ tangent to $\partial S$, then there are no non-trivial Killing fields $K$ on $(M,g)$ such that $L_K X = 0$ neither. Part (i) of Proposition \ref{prop:for-intro-alpha-fixed-1-and-2} completes the proof of the theorem.
\end{proof}

Let us proceed with the proof of Theorem \ref{thm:introalphafixed2}, as an application of part (ii) of Proposition \ref{prop:for-intro-alpha-fixed-1-and-2}.

\begin{proof}[Proof of Theorem \ref{thm:introalphafixed2}]
We first recall the setting. Assume $M$ is a compact connected oriented 3-manifold with boundary. Fix a metric $g_0$ on $M$ and set $\mu = \mu_{g_0}$. Let $(X,p)$ be an MHD equilibrium on $(M,g_0)$ with pressure function $p$ which is constant and regular on the connected components of $\partial M$. We set $\alpha = i_X g_0$.

Now, setting $S$ to be a connected component of $\partial M$, because $\partial S = \emptyset$, we have for $g \in \cM_{(X,\alpha,\mu)}(M)$ that a Killing field $K$ on $(M,g)$ is tangent to $S$ if and only if it is tangent to $S$ and its boundary. Thus, with this choice of $S$, Proposition \ref{prop:for-intro-alpha-fixed-1-and-2} (ii) immediately implies that there is a $C^2$-open and $C^{\infty}$-dense subset of $\cU \subset \cM_{(X,\alpha,\mu)}(M)$ such that each $g \in U$, there are no non-trivial Killing fields $K$ on $(M,g)$ such that $K$ is tangent to $\partial M$.
\end{proof}

\section{Extensions and examples of the main results}\label{sec:extensions}

In this section, we provide some extensions and examples of the main results which take place on a compact connected oriented 3-manifold $M$ with boundary. It will be clear to the reader that other variations exist but for the sake of brevity, we only list a few of them. 

It is natural to ask whether a variant of Theorem \ref{thm:introalphafixed1} holds if one does not require that the pressure function is constant on the boundary. This assumption, however, enabled us to conclude that the Killing fields in the proof of Theorem \ref{thm:introalphafixed1} were also tangent to the pressure function of the steady Euler flow. In the following corollary of Proposition \ref{prop:for-intro-alpha-fixed-1-and-2}, we assume this to be the case, along with some boundary conditions.

\begin{corollary}
Fix a metric $g_0$ on $M$ and set $\mu = \mu_{g_0}$. Let $(X,p)$ be an MHD equilibrium on $(M,g_0)$ which is tangent to $\partial M$. Assume the pressure function is such that $dp \neq 0$. Then, setting $\alpha = i_Xg_0$, there is a $C^0$-open and $C^{\infty}$-dense subset of $\cU \subset \cM_{(X,\alpha,\mu)}(M)$ such that for any $g \in \cU$, there does not exist non-trivial Killing fields $K$ on $(M,g)$ tangent to $\partial M$ such that $L_K p = 0$.
\end{corollary}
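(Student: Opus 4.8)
The plan is to deduce this corollary directly from part (i) of Proposition \ref{prop:for-intro-alpha-fixed-1-and-2} by supplying an appropriate surface $S$. Since $(X,p)$ is an MHD equilibrium on $(M,g_0)$, Equation \eqref{eq:MHD} gives $X \cdot \nabla p = 0$ and $\curl X \cdot \nabla p = 0$, so Equation \eqref{eq:nearly-Euler} holds on $(M,g_0)$. The hypothesis $dp \neq 0$ means $\text{Reg}_M(p) = M$. By Sard's Theorem, $p$ admits a $\partial$-regular value, and I would select $S$ to be a connected component of the corresponding $\partial$-regular level set, so that $S$ is an embedded surface in $M$ on which $p$ is constant and regular, with $\partial S = S \cap \partial M$.

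The next step is to verify the tangency condition required by the proposition, namely that $X$ is tangent to $\partial S$. Since $(X,p)$ is tangent to $\partial M$ and $X$ is everywhere tangent to the level sets of $p$ (as $X \cdot \nabla p = 0$), the field $X$ is tangent to both $\partial M$ and $S$. At boundary points of $S$, where $S$ and $\partial M$ meet transversally (a consequence of $\partial$-regularity), being tangent to both forces tangency to their intersection $\partial S$; this is precisely the argument used in the proof of part (i) of Proposition \ref{prop:KcommX}. Thus $X$ is tangent to $\partial S$.

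With $S$ in hand, I would take $V = \Int M$ (or any neighbourhood of $S$ on which the level sets of $p$ are compact, which exists because $M$ is compact) and apply Proposition \ref{prop:for-intro-alpha-fixed-1-and-2}(i). This yields a $C^0$-open and $C^\infty$-dense subset $\cU \subset \cM_{(X,\alpha,\mu)}(M)$ such that for each $g \in \cU$ there are no non-trivial Killing fields $K$ on $(M,g)$ with $L_K p|_V = 0$ and $K$ tangent to $\partial S$. The final bookkeeping step is to check that any Killing field $K$ tangent to $\partial M$ with $L_K p = 0$ satisfies the hypotheses of the proposition: tangency to $\partial M$ together with $L_K p = 0$ (hence $K$ tangent to the level sets of $p$) gives, by the same transversality argument, that $K$ is tangent to $\partial S$; and $L_K p = 0$ globally certainly gives $L_K p|_V = 0$. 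Hence no such non-trivial $K$ survives on metrics in $\cU$.

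The argument is essentially routine once the surface $S$ is produced, so I do not anticipate a genuine obstacle; the only point demanding care is the transversality claim ensuring both $X$ and the candidate Killing field $K$ are tangent to $\partial S$, and the observation that when $X$ vanishes somewhere on $S$ the conclusion still holds (with $\cU$ being all of $\cM_{(X,\alpha,\mu)}(M)$) via the first remark in the proof of Proposition \ref{prop:for-intro-alpha-fixed-1-and-2}(i).
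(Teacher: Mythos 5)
Your proposal is correct and follows essentially the same route as the paper: fix a connected component $S$ of a $\partial$-regular level set of $p$ (which exists by Sard's theorem since $dp \neq 0$), observe that any vector field tangent to $\partial M$ and to the level sets of $p$ --- in particular $X$ and any candidate Killing field $K$ --- is tangent to $S$ and $\partial S$ by transversality, and invoke Proposition \ref{prop:for-intro-alpha-fixed-1-and-2}(i). The one slip is your primary choice $V = \Int M$: this fails to be a neighbourhood of $S$ whenever $\partial S \neq \emptyset$, and the level sets of $p|_{\Int M}$ need not be compact; however, your parenthetical fallback is exactly right --- since $M$ is compact one simply takes $V = M$, and the argument goes through as in the paper.
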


\begin{proof}
Because $dp \neq 0$, there exists a $\partial$-regular value of $p$. We therefore may fix a connected component $S$ of a $\partial$-regular level set. In particular, for any vector field $Z$ tangent to $\partial M$ such that $L_Z p = 0$, we have that $Z$ is tangent to $S$ and its boundary. We immediately obtain from Proposition \ref{prop:for-intro-alpha-fixed-1-and-2} (i) the result.
\end{proof}

We will now consider some examples of ideal MHD equilibria where the pressure function $p$ satisfies $dp = 0$. First consider Beltrami fields with a smooth proportionality factor on $(M,g_0)$. Recall, these are vector fields $X$ such that
\begin{equation*}
\Div X = 0, \qquad \curl X = \lambda X 
\end{equation*}
on $(M,g_0)$ for some $\lambda \in C^{\infty}(M)$. Also note, writing $\alpha = i_X g_0$ and $\mu = \mu_{g_0}$, it follows immediately from Equation \eqref{eq:curlandcrossprod} that if one has a metric $g \in \cM_{(X,\alpha,\mu)}(M)$, then $X$ remains a Beltrami field with proportionality factor $\lambda$ on $(M,g)$.

\begin{corollary}\label{cor:beltrami-fields}
Fix a metric $g_0$ on $M$ and set $\mu = \mu_{g_0}$. Let $X$ be a not identically zero Beltrami field on $(M,g_0)$ tangent to $\partial M$ with smooth proportionality factor $\lambda$ and suppose that $d\lambda \neq 0$. Setting $\alpha = i_Xg_0$, there is a $C^0$-open and $C^{\infty}$-dense subset of $\cM_{(X,\alpha,\mu)}(M)$ which do not support non-trivial Killing fields $K$ on $(M,g)$ tangent to $\partial M$ and satisfying $L_K X = 0$.
\end{corollary}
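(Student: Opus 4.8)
The plan is to treat the proportionality factor $\lambda$ as a ``pressure-like'' first integral and reduce to part (i) of Proposition \ref{prop:for-intro-alpha-fixed-1-and-2}, exactly as in the preceding corollary. First I would record, via Remark \ref{rmk:Beltrami}, that $\lambda$ is a first integral of both $X$ and $\curl X$; together with $\Div X = 0$ this shows $(X,\lambda)$ satisfies Equation \eqref{eq:nearly-Euler}, and since $X$ remains a Beltrami field with the same factor $\lambda$ for every $g \in \cM_{(X,\alpha,\mu)}(M)$ (as noted before the statement), this persists under the metric perturbations that Proposition \ref{prop:for-intro-alpha-fixed-1-and-2} is built from.

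The key reduction step is the implication $L_K X = 0 \Rightarrow L_K \lambda = 0$, which plays the role that Lemma \ref{lem:tangenttobernoulli} plays in Theorem \ref{thm:introalphafixed1}. I would take it straight from the computation in Remark \ref{rmk:Beltrami}: from $L_K X = 0$ one gets $L_K\alpha = 0$ (Remark \ref{rem:killing-on-adapted-metric}) and $L_K\mu = 0$, and since $d\alpha = \lambda\, i_X\mu$, applying $L_K$ yields $(L_K\lambda)\, i_X\mu = 0$, so that $L_K\lambda = 0$ wherever $X \neq 0$. Crucially, unlike the pressure case this requires no global hypothesis on $\lambda$ (such as the existence of a critical point), which is why the hypothesis here is only $d\lambda \neq 0$.

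Next I would choose the surface $S$. Since $d\lambda \neq 0$, Sard's theorem furnishes $\partial$-regular values, and the $\partial$-regular level sets are dense in $\text{Reg}_M(\lambda)$ (Corollary \ref{cor:partialdensity}); selecting such a level set through a point where $X \neq 0$ gives a connected component $S$ with $X|_S \neq 0$. Because $\lambda$ is a first integral, $X$ is tangent to $S$; and since $X$ is tangent to $\partial M$ and $\partial S = S \cap \partial M$ is a transverse intersection, $X$ is tangent to $\partial S$, verifying the hypotheses of Proposition \ref{prop:for-intro-alpha-fixed-1-and-2}(i). Likewise, any Killing field $K$ tangent to $\partial M$ with $L_K X = 0$ satisfies $L_K\lambda = 0$ near $S$ and is therefore tangent to $S$ and to $\partial S$. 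Applying part (i) with $\lambda$ in the role of $p$, and a neighbourhood $V$ of $S$ with compact $\lambda$-level sets (available since $M$ is compact), produces the $C^0$-open, $C^\infty$-dense set $\cU$; for $g \in \cU$ every such $K$ must vanish, which is the conclusion.

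I expect the main obstacle to be the selection of $S$ and the attendant bookkeeping. In the MHD setting a zero of $X$ on a level set forces $\nabla p = 0$ and is thus excluded on a regular pressure surface, but a Beltrami field may genuinely vanish on a regular level set of $\lambda$ (the relation $\curl X = \lambda X$ does not force $d\lambda = 0$ at a zero of $X$). Hence one must argue that some $\partial$-regular component $S$ meets $\{X \neq 0\}$, and must remember that $L_K\lambda = 0$ is only guaranteed on $\{X \neq 0\}$ when choosing the neighbourhood $V$ on which the companion-field construction underlying Proposition \ref{prop:for-intro-alpha-fixed-1-and-2}(i) is carried out.
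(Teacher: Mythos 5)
Your proposal reproduces the structure of the paper's own proof: the same reduction $L_K X = 0 \Rightarrow L_K \lambda = 0$ through Remark \ref{rmk:Beltrami} (correctly noting that, unlike the pressure case, no critical-point hypothesis on $\lambda$ is needed), the same choice of a connected component $S$ of a $\partial$-regular level set of $\lambda$, and the same appeal to Proposition \ref{prop:for-intro-alpha-fixed-1-and-2}(i) with $\lambda$ in the role of $p$ (taking $V = M$, which is legitimate since $M$ is compact). However, you leave unresolved precisely the step you yourself call the ``main obstacle'': showing that some $\partial$-regular level set of $\lambda$ meets $\{X \neq 0\}$. This is a genuine gap, not bookkeeping, because nothing in the tools you invoke rules out the possibility that $X \equiv 0$ on all of $\text{Reg}_M(\lambda)$: on any open set where $X$ vanishes identically, the relation $\curl X = \lambda X$ imposes no constraint whatsoever on $\lambda$, so $d\lambda \neq 0$ there is consistent with everything you have assumed, and in that scenario no admissible surface $S$ with $X|_S \neq 0$ exists and the whole strategy stalls.

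The paper closes this gap with an external, unique-continuation-type input: by \cite{Ger} (Proposition 2.1 there), the interior zero set of a Beltrami field that is not identically zero has Hausdorff dimension at most $1$; in particular $U = \{x \in M : X|_x \neq 0\}$ is dense in $M$. Density of $U$ makes $\text{Reg}_M(\lambda) \cap U$ a non-empty open set, so Corollary \ref{cor:partialdensity} produces a $\partial$-regular value $y$ of $\lambda$ with $\lambda^{-1}(y) \cap U \neq \emptyset$, and one takes $S$ to be a component of $\lambda^{-1}(y)$ meeting $U$. The same density fact also disposes of your second worry: $L_K \lambda$ is continuous and vanishes on the dense set $U$, hence vanishes on all of $M$, so the hypothesis $L_K \lambda|_V = 0$ of Proposition \ref{prop:for-intro-alpha-fixed-1-and-2}(i) holds for any neighbourhood $V$ of $S$, regardless of whether $X$ has zeros on $S$. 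With this one citation supplied, your argument coincides with the paper's; without some such unique continuation result, the selection of $S$ cannot be justified.
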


\begin{proof}
Because $d\lambda \neq 0$, there exists a $\partial$-regular value $y$ of $\lambda$ such that $\lambda^{-1}(y)$ intersects $U = \{x \in M : X|_x \neq 0\}$. Indeed, this is because $U$ is a dense subset of $M$ (in fact, $\Int M \setminus U$ has a Hausdorff dimension of at most 1, see \cite[Proposition 2.1]{Ger}). We therefore may fix a connected component $S$ of $\lambda^{-1}(y)$ intersecting $U$. Then, fixing $g \in \cM_{(X,\alpha,\mu)}(M)$, if $K$ is a Killing field on $(M,g)$ tangent to $\partial M$, then as per Remark \ref{rmk:Beltrami}, $L_K \lambda = 0$, and therefore $K$ is tangent to $S$ and $\partial S$. From this, the result follows immediately from Proposition \ref{prop:for-intro-alpha-fixed-1-and-2} (i).
\end{proof}

For our last extension of the main results, we talk about Neumann harmonic vector fields on $M$, also known as vacuum fields. If $g$ is a metric on $M$, a vector field $X$ on $M$ is said to be a Neumann harmonic vector field on $(M,g)$ provided
\begin{equation*}
\Div X = 0 \text{ on } M, \qquad \curl X = 0 \text{ on } M, \qquad X \cdot n = 0 \text{ on } \partial M, 
\end{equation*}
where $n$ denotes the outward unit normal. If $g'$ is a metric adapted to $(X,\alpha = i_X g,\mu)$, then clearly $X$ is a Neuman harmonic field on $(M,g')$ also. The analog of Theorem \ref{thm:introalphafixed2} (from Proposition \ref{prop:for-intro-alpha-fixed-1-and-2} (ii)) is immediately obvious in this case.

\begin{corollary}\label{cor:vacuumC^2}
Fix a metric $g_0$ on $M$ and set $\mu = \mu_{g_0}$. Let $X$ be a Neumann vacuum vector field on $(M,g_0)$ tangent to $\partial M$. Suppose that $\partial M \neq \emptyset$ and $X$ has a first integral $f$ which is constant and regular on the connected components of $\partial M$. Then, setting $\alpha = i_Xg_0$, there is a $C^2$-open and $C^{\infty}$-dense subset of $\cM_{(X,\alpha,\mu)}(M)$ which do not support non-trivial Killing fields $K$ tangent on $(M,g)$ to $\partial M$ and satisfying $L_K X = 0$.
\end{corollary}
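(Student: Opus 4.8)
The plan is to invoke Proposition \ref{prop:for-intro-alpha-fixed-1-and-2} (ii) with the choice $p = f$. First I would verify its hypotheses: since $X$ is a vacuum field we have $\Div X = 0$ and $\curl X = 0$, and since $f$ is a first integral, $X \cdot \nabla f = L_X f = 0$; the remaining condition $\curl X \cdot \nabla f = 0$ in Equation \eqref{eq:nearly-Euler} then holds trivially because $\curl X = 0$. As noted before the statement, these relations together with $i_X g = \alpha$ and $\mu_g = \mu$ persist for every $g \in \cM_{(X,\alpha,\mu)}(M)$, so $X$ remains a Neumann vacuum field with first integral $f$ for all adapted metrics. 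I would then take $S$ to be a connected component of $\partial M$. By hypothesis $f$ is constant and regular on $S$, and since $S$ is a closed surface we have $\partial S = \emptyset$, so the conditions ``$X$ tangent to $\partial S$'' and ``$K$ tangent to $\partial S$'' are vacuous and any Killing field $K$ tangent to $\partial M$ is automatically tangent to $S$.

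The conclusion then follows by a short case analysis on $X|_S$. If $X|_S$ is nowhere-vanishing, Proposition \ref{prop:for-intro-alpha-fixed-1-and-2} (ii) applies directly and yields a $C^2$-open, $C^{\infty}$-dense set $\cU \subset \cM_{(X,\alpha,\mu)}(M)$ on which no nontrivial Killing field is tangent to $S$; because a $K$ tangent to $\partial M$ is tangent to $S$, such $\cU$ supports no nontrivial Killing field tangent to $\partial M$ at all, which is stronger than required (the hypothesis $L_K X = 0$ is not even needed). If instead $X|_S$ vanishes somewhere but is not identically zero, then Proposition \ref{prop:KcommX} (ii) — whose hypotheses \eqref{eq:nearly-mhd} are precisely those just checked with $p = f$ — forces every Killing field $K$ tangent to $\partial M$ (hence to $\partial S = \emptyset$) to vanish, for every adapted metric, so one may take $\cU = \cM_{(X,\alpha,\mu)}(M)$.

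The main obstacle is the nowhere-vanishing hypothesis of part (ii), and more precisely securing a boundary component on which $X$ does not vanish identically, since a priori a nonzero Neumann vacuum field could restrict to zero on an entire component of $\partial M$. I would rule this out using that $X \not\equiv 0$ together with unique continuation for the harmonic $1$-form $\alpha = i_X g_0$, so that at least one component $S$ lands in one of the two cases above. It is worth recording that this dichotomy is exactly the torus/higher-genus dichotomy for the $P$-harmonic form $\omega = i^*X^{\flat}$ of Lemma \ref{lemma:XisP-harmonic}: on a torus a nonzero $P$-harmonic $1$-form is automatically nowhere-vanishing (its zero count is $2g-2 = 0$), placing us in the first case, whereas on higher-genus components zeros may occur and are absorbed by Proposition \ref{prop:KcommX} (ii).
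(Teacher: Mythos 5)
Your main line — apply Proposition \ref{prop:for-intro-alpha-fixed-1-and-2}~(ii) with $p=f$ to a connected component $S$ of $\partial M$, after checking Equation~\eqref{eq:nearly-Euler} (with $\curl X\cdot\nabla f=0$ holding trivially) and the persistence of all data under adapted metrics — is exactly the paper's proof, which consists of the single remark that the corollary is the analog of Theorem~\ref{thm:introalphafixed2}. Moreover, your observation that the hypothesis ``$X|_S$ nowhere-vanishing'' is \emph{not} automatic here is a legitimate point the paper glosses over: in Theorem~\ref{thm:introalphafixed2} it is forced by $\nabla p=\curl X\times X\neq 0$ on $\partial M$, whereas for a vacuum field the only link between $X$ and $f$ is $df(X)=0$.

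The genuine gap is in your case where $X|_S\neq 0$ but has zeros. Proposition~\ref{prop:KcommX}~(ii) cannot be invoked for ``every Killing field $K$ tangent to $\partial M$'': the $K$ in part (ii) is the one introduced in part (i), i.e.\ a Killing field satisfying $L_Kp=0$, and its proof uses $L_K\,dp=0$ essentially — it is what gives $L_{K_S}P=0$ for $P=1/\|\nabla p\|\,|_S$, which in turn is what makes the \emph{second} invariant $({\star P}\omega)(K_S)$ constant (via Lemma~\ref{lem:K-pres-harmonic} and the Killing property). In your setting one does get $L_{K_S}\omega=0$ directly from $L_KX=0$ and Remark~\ref{rem:killing-on-adapted-metric}, hence $\omega(K_S)\equiv 0$; but the missing hypothesis $L_Kf=0$ is neither assumed in the corollary nor derivable: since $d\alpha=0$ for a vacuum field, the mechanism of Lemma~\ref{lem:tangenttobernoulli} ($-dp=i_Xd\alpha$) yields nothing, and $f$ is auxiliary data not determined by $(X,\alpha,\mu)$, so a symmetry of $X$ need not preserve $f$. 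Without the second constant one cannot conclude $K_S=0$ at a point where $\omega\neq 0$, so this case is unproven as written. A repair along the lines you hint at in your last paragraph does exist: by Lemma~\ref{lemma:XisP-harmonic} one has $H^1_{\text{dR}}(S)\neq\{0\}$, so by Lemma~\ref{lem:vanishinginsurface} either $K_S=0$ (then Lemma~\ref{lem:vanishingonsurface} finishes) or $K_S$ is nowhere-vanishing, and Poincar\'e--Hopf then forces $S$ to be a torus; so the problematic case survives only on torus components carrying a $P$-harmonic $1$-form with zeros, which your index-count remark would exclude. But that count (isolated zeros of negative index for $P$-harmonic forms, summing to $\chi(S)$) is nowhere in the paper and itself requires proof, as does the Aronszajn-type unique continuation you use to rule out $X|_{\partial M}\equiv 0$; so both of your auxiliary steps import substantive material beyond the paper, and the step resting on Proposition~\ref{prop:KcommX}~(ii) is, as stated, invalid.
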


As discussed in Section \ref{sec:Grad conjecture}, Grad was interested not only in counter-examples to Conjecture \ref{conj:Grad}, but also families of such examples. Here we describe how Theorem \ref{thm:introalphafixed1} can be used to generate families of examples.\\

\paragraph{\textbf{Smooth families of solutions.}}
Let $g_0$ be a metric on $M$ and $\mu$ be the associated volume form on $M$. Assume that there exists an MHD equilibrium $(X,p)$ on $(M,g_0)$ with $dp \neq 0$ and with $p$ being constant on $\partial M$. Then, we may fix a 2-torus $S \subset \Int M$ on which $p$ is constant and regular and there exist neighbourhoods $V \subsetneq M$ of $S$ such that $p|_V$ has compact level sets. Let $V$ be such a neighbourhood of $S$ and suppose that there exists a continuous family $\{(X_t,p_t)\}_{t \in \RR}$ of MHD equilibria on $(M,g_0)$ such that $(X_0,p_0) = (X,p)$, and, for $t \in \RR$,
\begin{equation*}
(X_t|_V,p_t|_V) = (X|_V,p|_V)     
\end{equation*}
with $p_t$ being constant on $\partial M$. By continuity of the family, we mean that the map $M \times \RR \ni (x,t) \to (X_t|_x,p_t(x)) \in TM \times \RR$ is continuous. 

\begin{example}\label{ex:families}
An easy example of such a family can be constructed in the flat three-torus $T^3$, i.e. with the metric $g=dx^2+dy^2+dz^2$. Set
\begin{equation*}
X = a(z) \pp{}{x}+b(z) \pp{}{y}, \qquad p=\frac{1}{2}(a^2(z)+b^2(z)).
\end{equation*}
One easily sees that for any $a,b \in C^{\infty}(T)$ the vector field $X$ is an MHD equilibrium with pressure $p$ (we can even modify $a$ and $b$ smoothly while keeping the same $p$).
\end{example}

Continuing with the general construction, set $C = M \backslash V$, consider the $C^0$-open subset
\begin{equation*}
\cM_{(g_0,C)}(M) = \{g \in \cM(M) : g|_C = g_0|_C\}
\end{equation*}
of all metrics $\cM(M)$. Setting $\alpha = i_X g_0$, consider the $C^0$-open subset
\begin{equation*}
\cM_{(X,\alpha,\mu,g_0,C)}(M)  = \cM_{(g_0,C)}(M) \cap \cM_{(X,\alpha,\mu)}(M)
\end{equation*}
of $\cM_{(X,\alpha,\mu)}(M)$. Observe that for $g \in \cM_{(X,\alpha,\mu,g_0,C)}(M)$,
\begin{equation*}
i_{X_t}g = i_{X_t}g_0
\end{equation*}
for all $t \in \RR$. Hence, $\{(X_t,p_t)\}_{t \in \RR}$ is a family of MHD equilibria on $(M,g)$. 

On the other hand, because $\cM_{(X,\alpha,\mu,g_0,C)}(M)$ is a (non-empty) open subset of $\cM_{(X,\alpha,\mu)}(M)$, it follows by Proposition \ref{prop:for-intro-alpha-fixed-1-and-2} (i) that there is a $C^0$-open and $C^{\infty}$-dense subset $\cU$ of $\cM_{(X,\alpha,\mu,g_0,C)}(M)$ such that for each $g \in \cU$, there exist no non-trivial Killing fields $K$ on $(M,g)$ such that $L_K p|_V = 0$.

In particular, for any such $g \in \cU$, for $t \in \RR$, if $K$ was a Killing field on $(M,g)$ with $L_K X_t = 0$ or $L_K p_t = 0$ then because $p_t$ must have a critical point, we have that $L_K X_t = 0 \implies L_K p_t = 0$. Hence, it follows in either case that $L_K p|_V = 0$. Therefore, there are no Killing symmetries on $(M,g)$ for this family for such a metric.

\paragraph{\textbf{Adapted metrics with Killing symmetries.}}

One may question whether Theorem \ref{thm:introalphafixed1} is a reflection of the fact that it is generic for metrics, in the space of all metrics, to admit no non-trivial Killing fields. That is, whether an adapted metric for which an MHD equilibrium is a counter-example to the generalised Grad's conjecture must possess no non-trivial Killing fields. If that were true, it would imply Grad's conjecture, after all, Euclidean space has many Killing symmetries and therefore could not admit a counter-example. However, here we present an example of an MHD equilibrium without Killing symmetries on a manifold that admits Killing fields. In addition, the metric that makes the vector an MHD equilibrium of this example lies in the dense and open set of adapted metrics constructed in Theorem \ref{thm:introalphafixed1}. This shows that this set contains, in general, metrics that do admit non-trivial Killing fields.

\begin{example}\label{eg:MHD-with-Killing-on-M}
Set $M = T^3$ with the Cartesian coordinates $(\zeta, \theta,\phi)$. Define 
\[ X = b(\zeta) \partial_\theta + \iota_0 b(\zeta) \partial_\phi,\quad \alpha = b(\zeta) d\theta + \iota_0 b(\zeta) d\phi,\quad \mu = d\zeta\wedge d\theta \wedge d\phi\] 
for any $b(\zeta) > 0$ and $\iota_0 \in \mathbb{R}\setminus\{0\}$. It can be computed that,
\[ L_X \mu = d i_X \mu = 0. \]
Therefore $X$ is a non-vanishing vector field on $M$ which is volume-preserving with respect to $\mu$.
    
Now, consider the smooth family of symmetric bilinear forms $g_\epsilon$ on $M$ with coefficient matrix $[g_\epsilon]$ in $(\zeta,\theta,\phi)$ given by
\[ [g_\epsilon] = \begin{pmatrix}
    \left(1-\epsilon (\iota_0^{-1}+\iota_0) f(\theta,\phi)\right)^{-1} & 0 & 0 \\
    0 & 1- \iota_0 \epsilon f(\theta,\phi) & \epsilon f(\theta,\phi) \\
    0 & \epsilon f(\theta,\phi) & 1 - \iota_0^{-1} \epsilon f(\theta,\phi)
\end{pmatrix},\]
where $f$ is an arbitrary fixed smooth function and $\epsilon$ is small enough so that $1-\epsilon(\iota_0^{-1}+\iota_0)f(\theta,\phi) > 0$. Observe that when $\epsilon = 0$, the coefficient matrix $[g_0]$ is the identity. This implies that there is an open neighbourhood $V\subset \mathbb{R}$ containing $0$ for which $g_\epsilon$ is positive definite for all $\epsilon \in V$. It follows that $g_\epsilon$ is a Riemannian metric on $M$ for all $\epsilon \in V$.

A calculation shows
\[ i_X g_\epsilon =  \alpha, \qquad \det([g_\epsilon])=1 \]
so that 
\[\mu_{g_\epsilon} = d\zeta \wedge d\theta \wedge d\phi = \mu\] 
for any $\epsilon \in V$. Consequently, as defined in Equation \ref{eq:adaptedsubspace}, $g_\epsilon \in \mathcal{M}_{(X,\alpha,\mu)}$. That is, for all $\epsilon \in V$, $g_\epsilon$ is a metric adapted to $(X,\alpha,\mu)$.

Now, let 
\[p := \tfrac12 \alpha(X) = \tfrac12 (1+\iota_0^2)b(\zeta)^2 = \tfrac12 g_\epsilon(X,X).\]
Then the tuple $(X,\alpha, \mu, p)$ is a guided flow as defined in Definition \ref{def:guidedflow}. Indeed, it has already been shown that $L_X \mu = 0$ and it is clear that $\alpha(X) > 0$ from the assumption that $b(\zeta) > 0$. Furthermore,
\[ dp = (1+ \iota_0^2)b(\zeta) b'(\zeta) d\zeta \implies dp(X) = 0. \]
Finally
\[ d\alpha = b'(\zeta)(d\zeta \wedge d\theta + \iota_0 d\zeta \wedge d\phi) \implies d\alpha \wedge dp = 0. \]
Hence, $(X,\alpha,\mu,p)$ is a guided flow. In addition to being a guided flow, it can be computed that 
\[i_X d\alpha = -dp.\] 
Hence, for any $g \in \mathcal{M}_{(X,\alpha,\mu)}(M)$, $X$ is an MHD equilibrium on $(M,g)$. In particular, $X$ is an MHD equilibrium with respect to the metric $g_\epsilon$ for all $\epsilon \in V$.

Now, observe that $[g_\epsilon]$ is independent of $\zeta$. Consequently,  $\partial_\zeta$ is a Killing field on $(M,g_\epsilon)$ for all $\epsilon \in V$. Yet, 
\begin{align*}
[\partial_\zeta,X] &= b^\prime(\zeta)\partial_\theta + \iota_0 b^\prime(\zeta) \partial_\phi,\\
\partial_\zeta(p) &= dp(\partial_\zeta) = (1+ \iota_0^2)b(\zeta) b'(\zeta),
\end{align*}
proving that if $b(\zeta)$ is not constant, then $\partial_\zeta$ is not a symmetry of $X$ nor is it a symmetry of the pressure function $p=\tfrac12 \alpha(X)$. 

In fact, by a judicious choice of $f(\theta,\phi)$, it can be shown that $X$ and $p$ have no symmetries that are Killing fields whenever $\epsilon \in V\setminus \{0\}$. We will show this by choosing $f$ so that $g_\epsilon$ is in the $C^\infty$-dense and $C^0$-open subset $\cU \subset \cM_{(X,\alpha,\mu)}(M)$ from the proof of Theorem \ref{thm:introalphafixed1} (for some regular level set of $p$, which will exist when $b'(\zeta) \neq 0$). Then it follows that there are no Killing fields $K$ such that $L_K X = 0$ or $K(p) = 0$.

To show $g_\epsilon \in \cU$ it is enough to construct $f$ so that the companion field $Y$ to $X$ has a modulus $\|Y\|_\epsilon^2 := g_\epsilon(Y,Y)$ which, when restricted to a regular level set $S$ of $p$, has a unique local maximum.
As defined in Theorem \ref{thm:prev-paper}, the companion field $Y$ is the unique vector field that satisfies 
\[ i_Y \mu = \frac{1}{\alpha(X)} \alpha \wedge dp. \]
It can be computed that  
\[ \frac{1}{\alpha(X)} \alpha\wedge dp = \tfrac12 b'(\zeta)\left( d\theta \wedge d\zeta + \iota_0 d\phi\wedge d\zeta \right). \]
Setting $Y = Y^\zeta \partial_\zeta + Y^\theta \partial_\theta + Y^\phi \partial_\phi$, it is also seen that 
\[ i_Y \mu = Y^\zeta d\theta \wedge d\phi - Y^\theta d\zeta\wedge d\phi + Y^\phi d\zeta \wedge d\theta.\]
It follows that the companion field $Y$ is given by 
\[ Y = \tfrac12 b^\prime(\zeta) \left( \iota_0 \partial_\theta - \partial_\phi \right). \]
Finally, 
\[ \|Y\|^2_\epsilon = b'(\zeta)^2 (1+\iota_0^2)\left(1 - \epsilon (\iota_0^{-1} + \iota_0)f(\theta,\phi) \right).\] 
Hence, for any regular level set $S$ of $p$ (that is, any set of the form $\{\zeta = \zeta_0\}$, with $b'(\zeta_0)\neq 0 $), provided we choose $f(\theta,\phi)$ so that $1 - \epsilon (\iota_0^{-1} + \iota_0)f(\theta,\phi)$ has a unique local maximum, $g_\epsilon$ will be in $\mathcal{U}$. It can then be concluded from Theorem \ref{thm:introalphafixed1} that there are no Killing fields $K$ such that $L_K X = 0$ or $K(p) = 0$.
\end{example}

\paragraph{\textbf{Quasi-symmetry.}}

In the hopes of finding non-axisymmetric MHD equilibria, one may try to find quasi-symmetric MHD equilibria which are a generalisation of axisymmetric MHD equilibria. A quasi-symmetric MHD equilibrium on an oriented Riemannian 3-manifold $(M,g_0)$ with boundary is an MHD equilibrium $(X,p)$ on $M$ (with $X$ nowhere-vanishing) together with a vector field $u$ (called the quasi-symmetry) such that \cite{B}
\begin{equation*}
L_u \|X\| = 0, \qquad L_u i_X\mu = 0,\qquad L_u X^{\flat} = 0
\end{equation*}
where the operations are taken with respect to a metric $g_0$ and $\mu = \mu_{g_0}$. Interestingly, setting $\alpha = i_X g_0$ these equations for $u$ are equivalent to
\begin{equation*}
L_u \alpha(X) = 0, \qquad L_u i_X\mu = 0, \qquad L_u \alpha = 0.
\end{equation*}
This means that $(X,p)$ remains a quasi-symmetric equilibrium on $(M,g)$ for any adapted metric $g \in \cM_{(X,\alpha,\mu)}(M)$.

In particular, one may take an axisymmetric toroidal domain $M \subset \RR^3$ and consider axisymmetric solutions $(X,p)$ of the MHD equations on $M$ with $X$ nowhere zero. Then, if $K$ denotes a Killing field on $M$ generating the rotational symmetry of $M$, then denoting by $g_0$ the Euclidean metric on $M$ and setting $\alpha = i_X g_0$, we have
\begin{equation*}
L_K \alpha(X) = 0, \qquad L_K i_X\mu = 0, \qquad L_K \alpha = 0.
\end{equation*}
In particular, for any adapted metric $g \in \cM_{(X,\alpha,\mu)}(M)$, the MHD equilibrium is quasi-symmetric. Using our main results, this provides many examples of quasi-symmetric MHD equilibria without Killing symmetries in the setting of abstract metrics.
 
\section{Discussion}\label{sec:discuss}

In this paper, we showed that one may start with an MHD equilibrium $(X,p)$ on some Riemannian manifold and perturb the metric so that the resulting Riemannian manifold still has $(X,p)$ as an MHD equilibrium but without symmetries. This was all done in an abstract setting and the question remains whether this has any application to our statement of Grad's conjecture (Conjecture \ref{conj:Grad} in Section \ref{sec:Grad conjecture}).

One piece of insight this brings into Grad's conjecture is that the setting of abstract metrics is far different from considering the case of the Euclidean metric. In particular, the conjecture is truly one about Euclidean space. In particular, attacking the classical conjecture requires tools that do not work in the abstract setting and instead are tailored for some special properties of Euclidean space.

If the conjecture is false, a potential avenue for producing counter-examples would be to take a solid torus $M$ with an abstract metric $g$ with an MHD equilibrium $(X,p)$ which has no non-trivial Killing symmetries and see if there exists an isometric embedding $\Psi : M \to \RR^3$. For instance, if one starts with an axisymmetric solid torus $M \subset \RR^3$ with the Euclidean metric $g_0$ and an axisymmetric MHD equilibrium $(X,p)$ on $M$, then perhaps it is possible to perturb the metric $g_0$ to an adapted metric $g$ for which $(X,p)$ does not have any Killing symmetries on $(M,g)$ in such a way that there still is an isometric embedding of $(M,g)$ into Euclidean space. 

The isometric embedding approach just mentioned would be very difficult or perhaps impossible to successfully implement even if the conjecture is false. For instance, we have not seen in this paper the impact on curvature after perturbing to a new adapted metric. Nevertheless, perhaps a looser variation of the above strategy could be fruitful. For instance, if one starts with an MHD equilibrium $(X,p)$ on $(M,g_0)$ and $\eta$ is a 1-form such that $i_Xd\eta = -df$, then any metric $g$ adapted to $(X,\alpha+\eta,\mu)$ will have $(X,p+f)$ as an MHD equilibrium. It may also be necessary to adjust $X$ as well.

\section{Data availability}

No datasets were generated or analysed during the current study.

\section{Competing interests}

The authors have no competing interests to declare that are relevant to the content of this article.

\appendix

\section{On $\partial$-regular values}\label{app:partialregvalues}

Fix manifolds $M$ and $N$ with (possibly empty) boundary and a smooth map $F : M \to N$ between them.

\begin{definition}
An element $y \in N$ is said to be $\partial$-regular if $y$ is a regular value of both $F$ and $F|_{\partial M}$.
\end{definition}

It is well-known that this situation gives a pre-image theorem for $\partial$-regular values.

\begin{theorem}[Pre-image Theorem for $\partial$-regular values]\label{thm:preimagepartial}
If $y \in N$ is a $\partial$-regular value of $F$, then $F^{-1}(y)$ is an embedded submanifold boundary of dimension $\dim M - \dim N$ satisfying
\begin{equation*}
\partial F^{-1}(y) = F^{-1}(y) \cap \partial M. 
\end{equation*}
\end{theorem}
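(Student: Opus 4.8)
The plan is to reduce everything to the ordinary preimage theorem for boundaryless manifolds and then treat the boundary points by an auxiliary regularity argument. Write $m = \dim M$ and $n = \dim N$. Away from the boundary there is nothing new: on $\Int M$ the hypothesis that $y$ is a regular value of $F$ gives, by the classical regular value theorem, that $F^{-1}(y) \cap \Int M$ is an embedded submanifold of dimension $m-n$. So the entire content is local near a point $x \in F^{-1}(y) \cap \partial M$, where I would pass to a boundary chart identifying a neighbourhood of $x$ with an open set in $\HH^m$ and $y$ with $0 \in \RR^n$.

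First I would extend the map. Since $F$ is smooth on the manifold-with-boundary, in the chart it extends to a smooth map $\hat F$ on an open set $W \subset \RR^m$ containing $x$, agreeing with $F$ on $W \cap \HH^m$. Because $DF_x$ is surjective (regularity of $y$) and $D\hat F_x = DF_x$, and surjectivity of the differential is an open condition, after shrinking $W$ the map $\hat F$ has $y$ as a regular value throughout $W$. The ordinary preimage theorem then makes $\Sigma := \hat F^{-1}(0) \cap W$ a smooth boundaryless submanifold of $W$ of dimension $m-n$, and near $x$ one has $F^{-1}(y) = \Sigma \cap \HH^m$.

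Next I would cut $\Sigma$ with the half-space using the last coordinate function $\pi = x^m$, so that $\Sigma \cap \HH^m = (\pi|_\Sigma)^{-1}([0,\infty))$. The standard sublevel-set lemma says this is a manifold with boundary $(\pi|_\Sigma)^{-1}(0)$ provided $0$ is a regular value of $\pi|_\Sigma$, and establishing this regularity is the main obstacle, precisely where the second hypothesis on $F|_{\partial M}$ enters. At a point $z \in \Sigma$ with $\pi(z) = 0$, that is $z \in F^{-1}(y) \cap \partial M$, one has $T_z\Sigma = \ker DF_z$, and I must rule out the inclusion $T_z\Sigma \subseteq \ker d\pi_z = T_z\partial\HH^m$. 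If this inclusion held, then $\ker DF_z \subseteq T_z\partial M$, so $\ker\big(DF_z|_{T_z\partial M}\big) = \ker DF_z$ would have dimension $m-n$; but regularity of $y$ as a value of $F|_{\partial M}$ forces $DF_z|_{T_z\partial M}$ to be onto $T_yN$, whence its kernel has dimension $(m-1)-n = m-n-1$, a contradiction. Hence $d(\pi|_\Sigma)_z \neq 0$ and $0$ is indeed a regular value.

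Finally I would assemble the local descriptions. Across interior and boundary points alike, $F^{-1}(y)$ is an embedded submanifold with boundary of dimension $m-n = \dim M - \dim N$, and its boundary, computed locally as $(\pi|_\Sigma)^{-1}(0) = \Sigma \cap \partial\HH^m$, is exactly $F^{-1}(y) \cap \partial M$, yielding the stated identity $\partial F^{-1}(y) = F^{-1}(y) \cap \partial M$. The only delicate point is the dimension count that establishes regularity of the cutting function $\pi|_\Sigma$, which is precisely the reason the definition of a $\partial$-regular value demands regularity of both $F$ and $F|_{\partial M}$; the rest is bookkeeping with the classical preimage theorem.
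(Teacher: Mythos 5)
Your proof is correct. There is nothing in the paper to compare it against: the paper states Theorem \ref{thm:preimagepartial} as a well-known fact and never proves it, using it only as a black box in Appendix \ref{app:partialregvalues}. Your argument is the standard textbook proof (essentially the one in Guillemin--Pollack): extend $F$ locally past the boundary, apply the classical preimage theorem to get a boundaryless $\Sigma$ with $F^{-1}(y) = \Sigma \cap \HH^m$ near a boundary point, and then cut $\Sigma$ by the height function $\pi = x^m$ via the regular sublevel-set lemma. The one genuinely non-routine step — showing $0$ is a regular value of $\pi|_\Sigma$ by ruling out $\ker dF_z \subseteq T_z\partial M$ through the dimension count $m-n \neq (m-1)-n$ — is handled correctly, and it is exactly the point where both halves of the $\partial$-regularity hypothesis (regularity of $F$ and of $F|_{\partial M}$) are used; the remaining assembly of local charts into the global statement $\partial F^{-1}(y) = F^{-1}(y)\cap \partial M$ is routine and correctly indicated.
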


Besides this pre-image theorem, we also note the following corollary of Sard's Theorem.

\begin{corollary}[Sard's Theorem for $\partial$-critical values]\label{Sardforpartial}
Let $C \subset N$ denote the $\partial$-critical values of $F$. Then $C$ has measure zero in $N$.
\end{corollary}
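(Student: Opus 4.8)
The plan is to reduce the statement to two applications of the classical Sard theorem for maps between boundaryless manifolds. First I would unpack the definition: $y$ fails to be $\partial$-regular precisely when it is a critical value of $F$ (with respect to the full differential $dF_x$ at some $x \in F^{-1}(y)$, boundary points included) or a critical value of the restriction $F|_{\partial M}$. Thus I would write $C = C_F \cup C_\partial$, where $C_F$ denotes the critical values of $F$ and $C_\partial$ the critical values of $F|_{\partial M}$, and it then suffices to show each piece has measure zero in $N$.

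The set $C_\partial$ is the easy one: the boundary $\partial M$ is a smooth manifold without boundary of dimension $\dim M - 1$, so $F|_{\partial M} : \partial M \to N$ is a smooth map between boundaryless manifolds, and classical Sard gives that $C_\partial$ has measure zero. For $C_F$, I would split the critical points of $F$ into interior and boundary ones. On $\Int M$, which is boundaryless, classical Sard applied to $F|_{\Int M}$ shows the interior critical values form a measure-zero set $A$. For a boundary critical point $x \in \partial M$ — that is, $dF_x : T_x M \to T_{F(x)} N$ not surjective — the further restriction to the subspace $T_x(\partial M) \subset T_x M$ is again non-surjective, so $x$ is in fact a critical point of $F|_{\partial M}$ and hence $F(x) \in C_\partial$. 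Therefore the boundary critical values of $F$ are already contained in $C_\partial$, giving $C_F \subseteq A \cup C_\partial$.

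Combining the two bounds, $C = C_F \cup C_\partial \subseteq A \cup C_\partial$ is a union of measure-zero sets and hence has measure zero, as claimed. There is no real obstacle in this argument; the only point needing a moment's care is the elementary observation that non-surjectivity of $dF_x$ on the full tangent space $T_x M$ forces non-surjectivity of its restriction to $T_x(\partial M)$. This is exactly what lets me absorb the boundary critical values of $F$ into those of $F|_{\partial M}$, so that I never need a separate Sard-type argument at boundary points and can rely solely on the standard boundaryless theorem.
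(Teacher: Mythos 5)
Your proof is correct and starts from the same decomposition as the paper: you write the $\partial$-critical values as the union of the critical values of $F$ and those of $F|_{\partial M}$, exactly as the paper does. Where you diverge is in how the critical values of $F$ itself are handled. The paper simply invokes Sard's theorem in the form valid for maps whose domain and codomain may have boundary (citing Lee, Thm.~6.10) for both $F$ and $F|_{\partial M}$, and is done in three lines. You instead reduce to the boundaryless version: interior critical values are handled by Sard applied to $F|_{\Int M}$, and boundary critical points of $F$ are absorbed into critical points of $F|_{\partial M}$ via the observation that non-surjectivity of $dF_x$ on all of $T_xM$ forces non-surjectivity of its restriction to $T_x(\partial M)$. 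That absorption step is sound, and it is essentially the argument by which the with-boundary version of Sard is deduced from the boundaryless one; your route is more self-contained, at the cost of reproving part of the cited theorem.

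One small point needs patching: the paper's setup allows $N$ itself to have nonempty boundary, so neither $F|_{\partial M} : \partial M \to N$ nor $F|_{\Int M} : \Int M \to N$ is literally a map \emph{between} boundaryless manifolds; your linear-algebra reduction only removes the boundary from the domain, not the target. This is harmless rather than fatal --- measure zero in $N$ is a chart-by-chart notion and the standard proof of Sard goes through with a half-space target, or one can pass to the double of $N$, under which codimension-zero embedding the critical points of $F$ are unchanged --- but as written your appeal to the classical boundaryless statement does not quite apply, whereas the version the paper cites covers both domain and codomain with boundary.
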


\begin{proof}
Denoting by $C_1$ the critical values of $F$ and $C_2$ the critical values of $F|_{\partial M}$, then we have the equality
\begin{equation*}
C = C_1 \cup C_2.    
\end{equation*}
Both $C_1$ and $C_2$ are measure-zero in $N$ by Sard's Theorem \cite[Thm.~6.10]{L}. Thus, $C$ has zero measure in $N$.
\end{proof}

We will use a result about the density of pre-images of $\partial$-regular values, which is a corollary of Corollary \ref{Sardforpartial}. To state it, we set
\begin{align*}
\text{Reg}_M(F) &= \{x \in M : x \text{ is a regular point of $F$}\},\\
\text{Reg}_N^{\partial}(F) &= \{y \in N : y \text{ is a $\partial$-regular value of $F$}\}.
\end{align*}
The corollary may now be stated as follows.

\begin{corollary}\label{cor:partialdensity}
The set $A = F^{-1}(\operatorname{Reg}^{\partial}_N(F))$ is a dense subset of the open set $U = \operatorname{Reg}_M(F)$.
\end{corollary}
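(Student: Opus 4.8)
The plan is to combine Sard's theorem for $\partial$-critical values (Corollary \ref{Sardforpartial}) with the openness of submersions, the only genuine subtlety being the behaviour at boundary points of $U$. First I would record the two standing facts: $U = \operatorname{Reg}_M(F)$ is open, since surjectivity of $DF_x$ is an open (maximal-rank) condition; and, by Corollary \ref{Sardforpartial}, the complement $C = N \setminus \operatorname{Reg}^\partial_N(F)$ of the set of $\partial$-regular values has measure zero in $N$. In these terms $A = F^{-1}(N \setminus C)$, and the task is to produce points of $A$ arbitrarily close to any prescribed $x_0 \in U$.

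The key reduction is to interior points. Since $\operatorname{Int} M$ is open and dense in $M$, the intersection $U \cap \operatorname{Int} M$ is dense in $U$; consequently it suffices to prove that $A \cap \operatorname{Int} M$ is dense in $U \cap \operatorname{Int} M$, whereupon taking closures yields $U \subseteq \overline{U \cap \operatorname{Int} M} \subseteq \overline{A \cap \operatorname{Int} M} \subseteq \overline{A}$, i.e. density of $A$ in $U$.

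For the interior density, I would take an interior regular point $x_0 \in U \cap \operatorname{Int} M$ and any open neighbourhood $W \subseteq U \cap \operatorname{Int} M$. Because every point of $U$ is regular and $W$ avoids $\partial M$, the restriction $F|_W$ is a genuine submersion, hence an open map, so $F(W)$ is open and nonempty in $N$. A nonempty open subset of $N$ cannot be contained in the measure-zero set $C$, so there exists $y \in F(W) \setminus C = F(W) \cap \operatorname{Reg}^\partial_N(F)$; choosing $x' \in W$ with $F(x') = y$ gives a point $x' \in A \cap W$. This establishes that $A \cap \operatorname{Int} M$ is dense in $U \cap \operatorname{Int} M$, completing the argument.

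The one point demanding care — and the reason for routing through $\operatorname{Int} M$ — is that the naive ``submersions are open'' argument fails at boundary points: a regular point $x_0 \in \partial M$ need not be a point of openness for $F$ (for instance, if $F$ restricts near $x_0$ to a boundary-defining function, then $F(W)$ is only a half-neighbourhood of $F(x_0)$, whose interior misses $F(x_0)$). Restricting to $\operatorname{Int} M$ circumvents this entirely, so in writing the proof I would take care to justify both that $U \cap \operatorname{Int} M$ is dense in $U$ and that exhibiting $A$-points inside $\operatorname{Int} M$ is enough; the remaining steps are the standard openness-of-submersions and ``open set meets a full-measure set'' facts.
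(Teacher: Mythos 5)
Your proposal is correct and is essentially the paper's own argument: both reduce to interior points via density of $\Int M$ in $M$, use openness of the submersion image to conclude that the image is not of measure zero, and then invoke Corollary \ref{Sardforpartial} to find a $\partial$-regular value in that image. The only detail the paper spells out that you gloss over is that $N$ may itself have boundary (the appendix allows this), so ``submersion, hence open map into $N$'' requires the preliminary observation that a submersion from a boundaryless manifold has image contained in $\Int N$ --- at a point mapping to $\partial N$, composing $F$ with a boundary defining function of $N$ would yield a local minimum with nonvanishing differential, a contradiction --- after which openness into $\Int N$, and hence into $N$, follows from the standard boundaryless statement.
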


\begin{proof}
The set $U = \text{Reg}_M(F)$ being open is standard (see \cite[Proposition 4.1]{L}). Moreover, the inclusion $A \subset U$ is clear. Now, let $x \in U$ and $V$ a neighbourhood of $x$ in $U$. Because $\Int M$ is dense in $M$ and $V$ is open in $M$, $V_0 = V \cap \Int M$ is a non-empty open subset of $V$. Regarding $V_0$ as an open submanifold of $\Int M$, $F|_{V_0} : V_0 \to N$ is a submersion. Moreover, because $V_0$ has no boundary and $F$ is a submersion, it follows that for every $y \in F(V_0)$ and $v \in T_y N$, there exists $\epsilon > 0$ and a curve $\gamma : (-\epsilon,\epsilon) \to N$ such that $\gamma(0)= y$ and $v = \gamma'(0)$. In particular, $F(V_0) \subset \Int N$. Thus, the image $\Imag F|_{V_0} = F(V_0)$ is an open subset of $\Int N$ (see \cite[Prop.~4.28]{L}). In particular, $F(V_0) \subset N$ is not a set of measure zero. Thus, neither is $F(V) \supset F(V_0)$. However, this means by Corollary \ref{Sardforpartial} that $F(V) \cap \text{Reg}^{\partial}_N(F) \neq \emptyset$. We conclude that $V \cap A \neq \emptyset$ and hence that $A$ is dense in $U$.
\end{proof}

\section{Some geometry on surfaces and boundaries}\label{app:surfacegeo}

The main purpose of this section is to prove Lemma \ref{lem:vanishingonsurface} which we recall here.

\vanishingonsurface*

The first part of the lemma is straight forward.

\begin{proof}[Proof of Lemma \ref{lem:vanishingonsurface} (i)]
It is well-known that $K_S$ is indeed a smooth vector field. Moreover, by naturality of Lie derivatives of tensor fields,
\begin{equation*}
L_{K_S}i^*g = i^*L_K g = 0.
\end{equation*}
\end{proof}

To prove part (ii) of Lemma \ref{lem:vanishingonsurface}, the general case requires a discussion of geodesics in the case of manifolds with boundary in a very particular case. For this, we introduce some theory which is known in the literature but whose proofs are not easily found. To focus on what is essential, we make the following temporary definition. 

\begin{definition}
Let $(M,g)$ be a Riemannian manifold with boundary. Let $n : \partial M \to TM$ denote the inward normal on $M$. Then, if $\epsilon > 0$, a smooth curve $\gamma : [0,\epsilon) \to M$ is said to be an inward geodesic at $x \in \partial M$ if
\begin{enumerate}
    \item $\gamma(0) = x$ and $\gamma'(0) = n|_x$,
    \item $\gamma((0,\epsilon)) \subset \Int M$, and
    \item the restricted curve $(0,\epsilon) \to \Int M$ is a geodesic on the Riemannian manifold $(\Int M,\imath^*g)$ in the usual sense, where $\imath : \Int M \subset M$ is the interior inclusion.
\end{enumerate}
\end{definition}

For completeness, we will prove the following about inward geodesics.

\begin{proposition}[Riemannian normal-collars]\label{prop:Riemannian normal-collars}
Let $(M,g)$ be a Riemannian manifold with boundary. Let $n : \partial M \to TM$ denote the inward normal on $M$. Then, there exists a smooth positive function $0 < \delta \in C^{\infty}(\partial M)$ and a map $\varphi : U_\delta \to M$ which is a diffeomorphism onto its open image $\Sigma(U_\delta)$ where
\begin{equation*}
U_\delta = \{(x,t) \in \partial M \times [0,\infty) : 0 \leq t < \delta(x) \}
\end{equation*}
and for $x \in \partial M$, the curve
\begin{equation*}
[0,\delta(x)) \ni t \mapsto \Sigma(x,t)    
\end{equation*}
is an inward geodesic at $x$.
\end{proposition}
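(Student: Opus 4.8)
The plan is to realise the desired map as the restriction of the normal exponential map of $\partial M$, that is, to set up boundary-normal (Fermi) coordinates; I will write $\Sigma$ for the map called $\varphi$ in the statement. The crucial preliminary step is to remove the boundary. By the collar neighbourhood theorem one may glue an exterior collar $\partial M \times (-1,0]$ onto $M$ to obtain a smooth boundaryless manifold $\tilde M \supset M$ in which $M$ is a closed subset, $\Int M$ is open, and $\partial M$ is a properly embedded two-sided hypersurface (two-sided since the inward normal $n$ already trivialises its normal bundle). A partition of unity then extends $g$ to a smooth Riemannian metric $\tilde g$ on $\tilde M$ with $\tilde g|_M = g$; in particular $\tilde g|_{\Int M} = \imath^* g$, so the $\tilde g$-geodesics that happen to lie in $\Int M$ are precisely the geodesics of $(\Int M,\imath^* g)$.

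With this in place I would define $\Sigma(x,t) = \exp^{\tilde g}_x(t\,n(x))$, which is smooth on an open subset of $\partial M \times \RR$ containing the zero section $\partial M \times \{0\}$. By construction $\Sigma(x,0)=x$ and $\partial_t \Sigma(x,0)=n(x)$, so each curve $t\mapsto \Sigma(x,t)$ is a $\tilde g$-geodesic issuing from $x$ with initial velocity the inward normal. The differential $d\Sigma_{(x,0)}$ restricts to the inclusion $T_x\partial M \hookrightarrow T_x\tilde M$ on $T_x\partial M$ and sends $\partial_t$ to $n(x)$; since $n(x)$ is transverse to $\partial M$ we have $T_x\tilde M = T_x\partial M \oplus \RR\, n(x)$, so $d\Sigma_{(x,0)}$ is a linear isomorphism and $\Sigma$ is a local diffeomorphism near every $(x,0)$ by the inverse function theorem.

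To upgrade this to a global diffeomorphism on a set of the prescribed form, I would invoke the tubular neighbourhood theorem: after shrinking, $\Sigma$ restricts to a diffeomorphism from an open neighbourhood of the zero section onto an open neighbourhood $W$ of $\partial M$ in $\tilde M$. Because $\partial M$ may be non-compact, this neighbourhood need not be a uniform tube, and here one uses the standard paracompactness argument that any open neighbourhood of $\partial M\times\{0\}$ in $\partial M \times \RR$ contains a set $\{(x,t):|t|<\rho(x)\}$ for some smooth positive $\rho$. Taking $\delta=\rho$ and restricting to $U_\delta = \{(x,t):0\le t<\delta(x)\}$ produces a diffeomorphism onto $\Sigma(U_\delta)=W\cap M$, which is open in $M$.

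Finally I must verify the interior condition $\Sigma(x,(0,\delta(x)))\subset \Int M$, after possibly shrinking $\delta$. Since $M$ is closed in $\tilde M$, the set $W\setminus\partial M$ splits into the two disjoint open pieces $W\cap\Int M$ and $W\setminus M$, which correspond under $\Sigma^{-1}$ to $\{t>0\}$ and $\{t<0\}$ respectively; as $n$ is the inward normal, the fibre curve enters $\Int M$ for small $t>0$. For fixed $x$ the connected arc $\{\Sigma(x,t):0<t<\delta(x)\}$ avoids $\partial M$ (within the fibre only $t=0$ maps into $\partial M$, by injectivity of $\Sigma$) and therefore remains in the single component $W\cap\Int M$ throughout. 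Combined with $\tilde g|_{\Int M}=\imath^* g$, this shows each $t\mapsto \Sigma(x,t)$ is an inward geodesic at $x$, completing the proof. I expect the main obstacle to be exactly the non-compact case: producing the smooth fibrewise radius $\delta$ rather than a uniform constant, and verifying \emph{globally} (not merely for small time) that the inward geodesics stay in $\Int M$. Both are dispatched by the separating-hypersurface and connectedness argument above together with the tube-shrinking lemma.
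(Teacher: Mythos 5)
Your proof is correct, and its first half coincides with the paper's: both extend $(M,g)$ to a boundaryless Riemannian manifold $(\widetilde M,\widetilde g)$ and study the normal exponential map $\Sigma(x,t)=\exp^{\widetilde g}_x(t\,n(x))$, checking via its differential along the zero section that it is a local diffeomorphism there. Where you diverge is in the globalization. You obtain injectivity by citing the tubular neighbourhood theorem for the closed embedded hypersurface $\partial M \subset \widetilde M$, shrink to a variable-radius tube $\{|t|<\rho(x)\}$ with image $W$, and then get the interior condition ($\Sigma(x,t)\in \Int M$ for $0<t<\delta(x)$) by a separation argument: injectivity forces each arc $\{\Sigma(x,t): 0<t<\delta(x)\}$ to avoid $\partial M$, so by connectedness it stays in the component $W\cap \Int M$ of $W\setminus \partial M$ that it enters initially; this also yields $\Sigma(U_\delta)=W\cap M$, which is the required open image. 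The paper instead derives the interior condition from a defining function $f$ for $M$ in $\widetilde M$ (restricting to the region where $df$ is positive on the geodesic velocity, so that $f$ is monotone along each fibre curve), then constructs on small product neighbourhoods the vector fields tangent to the inward geodesics, glues them into a single vector field near $\partial M$ using its uniqueness statement for inward geodesics (Proposition \ref{prop:uniqueness-of-inward-geodesics}), and concludes with the Boundary Flowout Theorem. Your route is shorter and leans on standard boundaryless machinery; in particular it makes Proposition \ref{prop:uniqueness-of-inward-geodesics} unnecessary for this statement, its role being subsumed by injectivity of the tube, whereas the paper's flowout construction stays within the manifold-with-boundary framework (and the uniqueness proposition is needed later in Lemma \ref{lem:sufficient-case-vanishingonsurface} regardless). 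One presentational caveat: you assert $\Sigma(U_\delta)=W\cap M$ before giving the separation argument that justifies it; the argument does close this, since applied to $t<0$ it shows those points lie outside $M$, which gives the reverse inclusion $W\cap M\subset \Sigma(U_\delta)$.
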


\begin{remark}\label{rmk:extending-the-metric}
If $(M,g)$ is a Riemannian manifold with boundary, then it follows from the existence of the boundaryless double of $M$ (see \cite[Example 9.32]{L}) that $M$ may be embedded as a regular domain in a manifold $\tilde{M}$ without boundary. Moreover, as is standard, one may extend the metric $g$ to a metric $\tilde{g}$ in $\tilde{M}$. This is done by first: taking slice charts at boundary points $x \in \partial M$, extending the smooth coefficients metric in slice charts with the fact that positive definiteness of symmetric matrices (of given size) is a stable property and second: using a suitable partition of unity in $\tilde{M}$ choosing arbitrary local metrics in $\tilde{M} \backslash M$. 
\end{remark}

To prove Proposition \ref{prop:Riemannian normal-collars}, we first check existence and uniqueness of inward geodesics.

\begin{proposition}\label{prop:uniqueness-of-inward-geodesics}
Let $(M,g)$ be a Riemannian manifold with boundary and $x \in \partial M$. Then, there exists an inward geodesic at $x$. Let $0 < \epsilon_1 < \epsilon_2$ and $\gamma_i : [0,\epsilon_i) \to M$ for $i \in \{1,2\}$ be inward geodesics at $x$. Then, on $\gamma_1 = \gamma_2$ on $[0,\epsilon_1)$.
\end{proposition}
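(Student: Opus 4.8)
The plan is to transfer the problem to the boundaryless setting supplied by Remark~\ref{rmk:extending-the-metric}, where the geodesic spray is a globally defined smooth vector field and the standard existence--uniqueness theory for ODEs applies without the complication of a boundary. I would embed $(M,g)$ as a regular domain in a boundaryless manifold $(\tilde M,\tilde g)$ with $\tilde g|_M=g$, and let $G$ denote the geodesic spray on $T\tilde M$, with maximal (smooth) flow $\phi$. Viewing $x\in\partial M\subset\tilde M$ and $\xi_0=(x,n|_x)\in T\tilde M$, I set $\tilde\gamma(t)=\pi(\phi_t(\xi_0))$, the $\tilde g$-geodesic with $\tilde\gamma(0)=x$ and $\tilde\gamma'(0)=n|_x$, defined for $|t|<a$ for some $a>0$. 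For existence I would verify the three defining conditions for $\tilde\gamma|_{[0,\epsilon)}$ with $\epsilon$ small: condition (i) is immediate; for (ii) I would choose a local boundary defining function $f$ (so that $M=\{f\geq 0\}$ and $\Int M=\{f>0\}$ near $x$) and observe that $h(t):=f(\tilde\gamma(t))$ satisfies $h(0)=0$ and $h'(0)=df_x(n|_x)>0$, since $n|_x$ points into $\Int M$; hence $h(t)>0$, i.e. $\tilde\gamma(t)\in\Int M$, for small $t>0$; and (iii) follows because a geodesic of $(\tilde M,\tilde g)$ lying in the open set $\Int M$ is, by locality of the geodesic equation together with $\tilde g|_{\Int M}=\imath^*g$, a geodesic of $(\Int M,\imath^*g)$.

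For uniqueness, let $\gamma_1,\gamma_2$ be inward geodesics at $x$ and write $c_i(t)=(\gamma_i(t),\gamma_i'(t))$. On $(0,\epsilon_i)$ each $\gamma_i$ is a geodesic of $(\Int M,\imath^*g)$, hence of $(\tilde M,\tilde g)$, so $c_i$ is an integral curve of $G$ there, and $c_i(t)\to\xi_0$ as $t\to 0^+$ by condition (i). The key step is to show $c_i(t)=\phi_t(\xi_0)$ for all small $t>0$. Fixing $t\in(0,\min(a,\epsilon_i))$, the flow identity for the integral curve $c_i$ gives $c_i(t)=\phi_{t-s}(c_i(s))$ for all sufficiently small $s>0$; letting $s\to 0^+$ and using joint continuity of $\phi$ at $(t,\xi_0)$ together with $c_i(s)\to\xi_0$, the right-hand side converges to $\phi_t(\xi_0)$ while the left-hand side is constant, yielding $c_i(t)=\phi_t(\xi_0)$. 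Thus $c_1$ and $c_2$ agree on $(0,\min(a,\epsilon_1))$; since both are integral curves of the smooth vector field $G$ on the connected interval $(0,\epsilon_1)$, uniqueness of integral curves forces $c_1=c_2$, and hence $\gamma_1=\gamma_2$, on $(0,\epsilon_1)$. Continuity at the endpoint then gives $\gamma_1=\gamma_2$ on $[0,\epsilon_1)$.

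The main obstacle is precisely that the geodesic condition is imposed only on the open interval $(0,\epsilon)$, with the data $\gamma(0)=x$, $\gamma'(0)=n|_x$ holding merely as one-sided limits at a boundary point; one cannot simply quote uniqueness of geodesics with prescribed initial conditions, since neither curve is assumed to satisfy the geodesic ODE at $t=0$, and $x$ lies on $\partial M$ where the interior geodesic flow is not defined. The device that overcomes this is the passage to $(\tilde M,\tilde g)$, where $G$ and its flow $\phi$ live on all of $T\tilde M$: the joint continuity of $\phi$ is exactly what lets me ``launch'' both curves from the common limiting initial condition $\xi_0$ and identify each with $\phi_t(\xi_0)$ for small $t>0$, after which ordinary ODE uniqueness closes the argument.
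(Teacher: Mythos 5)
Your proof is correct, and it shares the paper's basic strategy---embed $(M,g)$ in a boundaryless extension $(\tilde{M},\tilde{g})$ as in Remark \ref{rmk:extending-the-metric} and identify every inward geodesic with the restriction of the $\tilde{g}$-geodesic with initial velocity $n|_x$---but the key step is executed by a genuinely different device. The paper never lifts to the tangent bundle: it extends the one-sided curve $\gamma$ smoothly across $t=0$ by observing that, for small interior times $t$, the $\tilde{g}$-geodesics $\kappa_t$ with $\kappa_t'(0)=\gamma'(t)$ exist on a uniform interval $(-\epsilon,\epsilon)$ and coincide with $\gamma$ (suitably shifted) on overlaps; patching $\gamma$ with $\kappa_{\epsilon/2}$ produces a genuine two-sided geodesic through $(x,n|_x)$, after which uniqueness of geodesics with prescribed initial velocity finishes the argument. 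You instead lift to the geodesic spray $G$ on $T\tilde{M}$ and use the flow identity $c_i(t)=\phi_{t-s}(c_i(s))$ together with openness of the domain of the maximal flow and joint continuity of $\phi$ at $(t,\xi_0)$ to conclude $c_i(t)=\phi_t(\xi_0)$ directly, never constructing an extension of the curve; ordinary uniqueness of integral curves then closes the argument. The two devices are interchangeable here, and yours is arguably cleaner, since continuity of the flow replaces the overlap bookkeeping of the patching construction, at the cost of working in $T\tilde{M}$ rather than with curves in $\tilde{M}$. A further difference in your favour: you prove the existence assertion explicitly (via a local defining function $f$, setting $h(t)=f(\tilde{\gamma}(t))$ and using $h(0)=0$, $h'(0)=df_x(n|_x)>0$ to get condition (ii), with locality of the geodesic equation giving condition (iii)), whereas the paper's proof concentrates entirely on the extension claim and leaves the existence part essentially implicit.
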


\begin{proof}
Following Remark \ref{rmk:extending-the-metric}, we may fix a Riemannian manifold $(\tilde{M},\tilde{g})$ whereby $M$ is contained in $\tilde{M}$ as a regular domain of $M$. We make use of various elementary facts about geodesics defined on open intervals in Riemannian manifolds without boundary. Let $\gamma : [0,\epsilon) \to \tilde{M}$ be an inward geodesic at a point $x \in \partial M$ viewed as a curve in $\tilde{M}$. We claim that the unique maximal geodesic $\tilde{\gamma}: I \to \tilde{M}$ (where $I$ is an open interval about $0$) satisfying $\tilde{\gamma}'(0) = \gamma'(0)$ extends the curve $\gamma$.

To prove the claim, consider the point $V = \gamma'(0) \in T\tilde{M}$. Observe that there exists $\epsilon > 0$ and a neighbourhood $U$ of $V$ in $T\tilde{M}$ such that for each $W \in U$, there a geodesic $\kappa : (-\epsilon,\epsilon) \to \tilde{M}$ satisfying $\kappa'(0) = W$. Moreover, because $\gamma' : [0,\epsilon) \to T\tilde{M}$ is continuous, there exists $\delta > 0$ such that $\gamma'([0,\delta)) \subset U$. Thus, for each $t \in (0,\delta)$, there exists a geodesic $\kappa_t : (-\epsilon,\epsilon) \to \tilde{M}$ such that
\begin{equation*}
\kappa_t'(0) = \gamma'(t).    
\end{equation*}
By uniqueness of geodesics, we have that for $t \in (0,\delta)$ and $s \in (0,b) \cap (t-\epsilon,t+\epsilon)$,
\begin{equation*}
\gamma(s) = \kappa_t(s-t).
\end{equation*}
Hence, the curve $\kappa : (-\epsilon/2,b) \to \tilde{M}$ given by $\kappa(t) = \gamma(t)$ for $t \in [0,b)$ and 
\begin{equation*}
\kappa(t) = \kappa_{\epsilon/2}(s-\epsilon/2)    
\end{equation*}
is smooth on $\tilde{M}$ and satisfies the geodesic equation on $(-\epsilon/2,0)\cup (0,b)$. Thus, $\kappa$ is a geodesic on $\tilde{M}$. Moreover, 
\begin{equation*}
\kappa'(0) = \gamma'(0).    
\end{equation*}
The claim then follows. 

Hence, by uniqueness of geodesics defined on open intervals in $\tilde{M}$, the Proposition follows from the above claim.
\end{proof}

We now prove Proposition \ref{prop:Riemannian normal-collars}.

\begin{proof}[Proof of Proposition \ref{prop:Riemannian normal-collars}]
Following Remark \ref{rmk:extending-the-metric}, we may fix a Riemannian manifold $(\tilde{M},\tilde{g})$ whereby $M$ is contained in $\tilde{M}$ as a regular domain of $M$. 

Fix a unit normal $n : \partial M \to T\tilde{M}$ on $\partial M$ pointing inward on $M$ and consider the smooth map $H : \partial M \times \RR \to TM$ given by $H(x,t) = t n|_p$. On the other hand, consider the exponential map $\exp : O \to M$ on $(M,g)$ following the notation of \cite[Section 5.5.1]{P}. That is, $v \in O$ if and only if there exists a geodesic $\gamma : I \to M$ with interval $I$ containing $[0,1]$ such that $\gamma'(0) = v$; and by definition, $\exp(v) = \gamma(1)$. Clearly $O$ contains the zero section of $TM$ and for every $v \in O$, $t v \in O$ for $t \in [0,1]$. 

In particular, the set $V = H^{-1}(O)$ is an open neighbourhood of $\{0\} \times \partial M$ in $\partial M \times \RR$ for which there exist open intervals $I_x$ for each $x \in S$ containing zero such that $V = \{(x,t) \in S \times t : t \in I_x\}$. Denote by $h : V \to O$ the according restriction of $H$. We obtain a smooth map $\tilde{\sigma} = \exp \circ h$. Then, it is easy to see by the definition of exponential map that for $(x,t) \in V$,
\begin{equation*}
\tilde{\sigma}(x,0) = x, \qquad \frac{\partial \tilde{\sigma}}{\partial t}\bigg|_{(x,t)} = n|_x
\end{equation*}
where for $(t,x) \in V$, $\frac{\partial \tilde{\sigma}}{\partial t}(x,t) = \gamma'(t)$ where $\gamma$ is the curve $I_x \ni t \mapsto \tilde{\sigma}(x,t)$. From this, it is clear that the tangent map $T_{(x,t)}\tilde{\sigma}$ is surjective and thus invertible by the dimension of the domain and codomain of $h$ being equal. Thus $\tilde{\sigma}$ is a local diffeomorphism.

By the existence of defining functions (see \cite[Theorem 5.48]{L}), there exists a smooth function $f \in C^{\infty}(\tilde{M})$ such that $f^{-1}([0,\infty)) = M$ is a regular sublevel set of $f$. That is, $f^{-1}(0)$ is a regular level set in $\tilde{M}$ which necessarily coincides with $\partial M$. Indeed, this is because $f^{-1}(0,\infty) \subset \Int M$ and $f^{-1}(0) \cup f^{-1}((0,\infty)) = M$, showing that $\partial M \subset f^{-1}(0)$, and because $0$ is a regular value and the minimum of $f$ on $M$, we must also have $f^{-1}(0) \subset \partial M$. Because $n$ is inward-pointing on $M$,
\begin{equation*}
V' = \{(x,t) \in V : df|_x\left(\frac{\partial \sigma}{\partial t}(x,t)\right) > 0\}   
\end{equation*}
is an open neighbourhood of $\partial M \times \{0\}$ in $\partial M \times \RR$. 

Then, for $(x,t) \in U$ the following holds:
\begin{enumerate}
    \item If $t < 0$, $\sigma(x,t) \in \tilde{M} \backslash M$
    \item If $t \geq 0$, $\sigma(x,t) \in M$
\end{enumerate}
Hence, the map $\sigma : V'_{\geq 0} \to M$ whereby
\begin{equation*}
V'_{\geq 0} = V' \cap (\partial M \times [0,\infty)), \qquad \sigma(x,t) = \tilde{\sigma}(x,t),
\end{equation*}
is a local diffeomorphism. Hence, for each $x \in \partial M$, we may choose a product neighbourhood of $(x,0) \in W_x \times [0,\epsilon_x)$ in $V'_{\geq 0}$ where $\epsilon_x > 0$ such that $\sigma$ is a diffeomorphism onto its image $U_x$. In particular, for each $x \in \partial M$, there exists a vector field $X_x : U_x \to TU_x$ on $U_x$ such that for each $y \in \partial U_x$, there is an inward geodesic $\gamma : [0,\epsilon_x) \to U_x$ at $y$ which satisfies $X_x(\gamma(t)) = \gamma'(t)$. It follows by the uniqueness of inward geodesics on $M$ (Proposition \ref{prop:uniqueness-of-inward-geodesics}) that we obtain a neighbourhood $U$ of $\partial M$ in $M$ and a vector field $X : U \to TU$ such that for each $y \in \partial U$, there is an inward geodesic $\gamma : [0,\epsilon) \to U$ at $y$ which satisfies $X(\gamma(t)) = \gamma'(t)$. The result now follows from applying the Boundary Flowout Theorem.
\end{proof}

We now move on to prove part (ii) of Lemma \ref{lem:vanishingonsurface}. For this, we consider a special case which is sufficient for the full result.

\begin{lemma}\label{lem:sufficient-case-vanishingonsurface}
Let $(M,g)$ be a Riemannian manifold with non-empty boundary. Suppose that $K$ is a Killing field on $M$ such that $K|_{\partial M} = 0$. Then, $K = 0$.
\end{lemma}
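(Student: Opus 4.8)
The plan is to show that the single hypothesis $K|_{\partial M}=0$ already forces the covariant derivative $\nabla K$ to vanish along $\partial M$ as well, and then to propagate the simultaneous vanishing of $K$ and $\nabla K$ into the interior along inward geodesics, using the rigidity of Killing fields. Since the conclusion may be verified on each connected component, we assume throughout that $M$ is connected.

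First I would extract the infinitesimal consequence of the Killing equation. Recall that $L_K g = 0$ is equivalent to skew-symmetry of the endomorphism $\nabla K$, that is, $g(\nabla_V K, W) = -g(\nabla_W K, V)$ for all vector fields $V,W$. Fix $x \in \partial M$ and let $n|_x$ be the inward normal. Because $K$ vanishes identically on $\partial M$, differentiating $K$ along any curve lying in $\partial M$ shows $\nabla_W K|_x = 0$ for every $W \in T_x\partial M$. Now evaluate the skew-symmetry relation on $V = n|_x$ and $W \in T_x\partial M$: it gives $g(\nabla_{n} K, W)|_x = -g(\nabla_W K, n)|_x = 0$, so $\nabla_{n}K|_x$ is orthogonal to $T_x\partial M$, hence normal; while taking $V=W=n|_x$ gives $g(\nabla_n K, n)|_x = 0$, so $\nabla_n K|_x$ is also tangential to $\partial M$. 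Therefore $\nabla_n K|_x = 0$, and since $T_xM$ is spanned by $n|_x$ together with $T_x\partial M$, we conclude $\nabla K|_x = 0$. Thus both $K$ and $\nabla K$ vanish on all of $\partial M$.

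Next I would propagate this into $\Int M$. By Proposition \ref{prop:Riemannian normal-collars} there is a collar neighbourhood $U$ of $\partial M$ swept out by the inward geodesics $t \mapsto \gamma_x(t)$ issuing from the points $x \in \partial M$, with $\gamma_x'(0) = n|_x$. Along each such geodesic the restriction $J = K \circ \gamma_x$ is a Jacobi field, since Killing fields restrict to Jacobi fields along geodesics: $J$ satisfies the linear second-order Jacobi equation on $(0,\epsilon)$, and as $\gamma_x$ and $K$ are smooth up to $t=0$, the equation persists by continuity on all of $[0,\epsilon)$. Its initial data are $J(0) = K|_x = 0$ and $\tfrac{D}{dt}J(0) = \nabla_{\gamma_x'(0)} K = \nabla_n K|_x = 0$, the latter by the previous step. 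Uniqueness of solutions of this ODE (compare the uniqueness of inward geodesics in Proposition \ref{prop:uniqueness-of-inward-geodesics}) forces $J \equiv 0$, so $K$ vanishes along every inward geodesic, and hence on the whole collar $U$.

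Finally I would invoke the classical rigidity of Killing fields in the boundaryless manifold $\Int M$. The set $U \cap \Int M$ is a non-empty open subset of the connected manifold $\Int M$ on which $K \equiv 0$, and therefore also $\nabla K \equiv 0$. Because the second covariant derivative $\nabla_V\nabla_W K$ of a Killing field is determined algebraically by $K$ and the Riemann curvature tensor, the pair $(K,\nabla K)$ obeys a first-order linear ODE along curves, so the locus where $(K,\nabla K)$ vanishes is both open and closed in $\Int M$; being non-empty, it is all of $\Int M$. Hence $K = 0$ on $\Int M$, and by continuity $K = 0$ on $M$. The main obstacle is the first step, namely deducing $\nabla K|_{\partial M}=0$ from the mere vanishing of $K$ on the hypersurface $\partial M$, which is precisely where skew-symmetry of $\nabla K$ is indispensable; the remaining propagation is a routine use of the inward-geodesic collar of Proposition \ref{prop:Riemannian normal-collars} and Jacobi-field uniqueness, the only delicate point being the validity of the Jacobi equation up to the boundary endpoint $t=0$.
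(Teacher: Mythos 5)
Your proof is correct, and it reaches the paper's key intermediate conclusion --- that $K$ vanishes on a collar of $\partial M$ --- by a genuinely different mechanism. The paper argues with the flow of $K$: since $K|_{\partial M}=0$, each time-$t$ map $\Psi^K_t$ is an isometry fixing $\partial M$ pointwise and preserving the inward normal, hence it carries inward geodesics to inward geodesics with the same initial data; the uniqueness statement of Proposition \ref{prop:uniqueness-of-inward-geodesics} then forces $\Psi^K_t$ to fix the trace of every inward geodesic, so $K=0$ on the set they sweep out, which contains an open set by Proposition \ref{prop:Riemannian normal-collars}. You never touch the flow: instead, skew-symmetry of $\nabla K$ (the Killing equation) together with $K|_{\partial M}=0$ gives $\nabla K|_{\partial M}=0$, and vanishing then propagates inward because $K$ restricts to a Jacobi field with zero initial data along each inward geodesic. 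Your route avoids the paper's bookkeeping of the flow domains $M_t$ (needed because the flow is only local), at the price of invoking two standard pieces of Killing-field theory (skew-symmetry of $\nabla K$, and that Killing fields restrict to Jacobi fields); you also reprove, rather than cite, the rigidity step for which the paper invokes \cite[Proposition~8.1.4]{P}. In fact your first step would let you streamline further: once $(K,\nabla K)$ vanishes at a single boundary point, the first-order linear ODE satisfied by the pair $(K,\nabla K)$ along one inward geodesic already produces an interior point where both vanish, and rigidity on the connected $\Int M$ finishes the proof --- the full collar and the open set are then unnecessary. Two minor caveats, neither a gap relative to the paper: your reduction ``to each connected component'' is only valid when every component meets $\partial M$ (on a closed component the conclusion fails), but the paper's statement carries the same implicit assumption and the lemma is only ever applied to connected manifolds; and your continuity argument for the Jacobi equation holding up to the endpoint $t=0$ is the correct way to handle the fact that inward geodesics are geodesics of $\Int M$ only on the open interval.
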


\begin{proof}
First, because $K|_{\partial M} = 0$, $K$ is in particular tangent to $\partial M$. Thus, we may consider the maximal flow $\Psi^K : \cD \to M$ of $K$ where $\cD \subset \Rone \times M$ denotes the (maximal) domain of the flow of the Killing field. Following the notation in \cite{L}, we set $M_t = \{x \in M : (t,x) \in \cD\}$ and $I_x = \{t \in \Rone : (t,x) \in \cD \}$. Since $K|_\partial M = 0$, for $x \in \partial M$ we have that the flow fixes $x$ and $I_x = \RR$ for all $x \in \partial M$. So $\partial M \subset M_t$ for each $t \in \RR$. For $t \in \RR$, let $g_t$ be the metric so that $(M_t,g_t)$ is the Riemannian manifold structure inherited from $(M,g)$. 

Now, for $t \in \RR$ let $\cG_t$ denote the set of maximal inward geodesic at $x$ in $M_t$ and consider the set
\begin{equation*}
O_{t} = \cup_{\gamma \in \cG_t} \Imag{\gamma}.   
\end{equation*}
On the one hand, letting $T \in \RR$ and $[0,T] = \{sT : s \in [0,1]\}$, for $t \in [0,T]$, considering the inclusion $i_{tT} : M_T \subset M_t$, if $\gamma$ is an inward geodesic at a point $x \in \partial M_t$, then $i_{tT} \circ \gamma$ an inward geodesic at a point $x \in \partial M_T$. Hence,
\begin{equation*}
O_{T} \subset O_t.
\end{equation*}

On the other hand, for $t \in \RR$, we have that $\Psi^K_t : M_t \to M_{-t}$ is an isometry and thus
\begin{equation*}
T \Psi^K_t \circ n_t = n_{-t}
\end{equation*}
and because $\Psi^K_t(\Int M_t) = \Int M_{-t}$, it follows that
\begin{equation*}
\cG_t \ni \gamma \mapsto \Psi^K_t \circ \gamma \in \cG_{-t}  
\end{equation*}
is a bijection. On the other hand, for each $\gamma \in \cG_t$, we may consider the inward geodesics
\begin{equation*}
i_{0t} \circ \gamma, \qquad i_{-t} \circ \Psi^K_t \circ \gamma 
\end{equation*}
on $M$ which must coincide by Proposition \ref{prop:uniqueness-of-inward-geodesics}. Hence, for $x \in O_t$,
\begin{equation*}
\Psi^K_t(x) = x.    
\end{equation*}
Thus, fixing $T > 0$, we have that for $x \in O_T$ that $\Psi^K_T(x) = x$. Moreover, for $t \in [0,T]$, if $x \in O_t$, we also have $\Psi^K_T(x) = x$ for all $x \in O_t$ but also that $O_T \subset O_t$ by the above. Altogether, we have that
\begin{equation*}
\Psi^K_t(x) = x \text{ for all } x \in O_T \text{ and } t \in [0,T].    
\end{equation*}
Hence, we deduce that
\begin{equation*}
K|_{O_T} = 0.
\end{equation*}
On the other hand, by Proposition \ref{prop:Riemannian normal-collars}, the subset $O_T \subset M_T$ contains an open set covered by inward geodesics. Hence, $K$ vanishes on an open subset of $M$. Thus, $K$ vanishes on a non-empty subset of $\Int M$. The vector field $\tilde{K}$ on $\Int M$ related to $M$ is thus a Killing field vanishing on an open subset. Thus, by \cite[Proposition 8.1.4]{P} it follows that $\tilde{K} = 0$ and thus that $K = 0$.
\end{proof}

We now prove Lemma \ref{lem:vanishingonsurface}.

\begin{proof}[Proof of Lemma \ref{lem:vanishingonsurface}]
First consider $\Int S$. There are two cases:
\begin{enumerate}
    \item either $\Int S \subset \partial M$ 
    \item or $\Int S \cap \Int M \neq \emptyset$.
\end{enumerate}

In case (i), since $\partial M$ is embedded in $M$ and $\Int S$ has codimension-1 in $M$, $\Int S$ (by the Inverse Function Theorem) is an open submanifold of $\partial M$. In particular, taking a boundary chart $(U,\varphi)$ of $M$ such that $\partial U \subset \Int S$, we have that $U$ is a codimension zero submanifold of $M$ with boundary such that $K|_{\partial U} = 0$. 

In case (ii), because $\Int S$ is immersed in $M$, $\Int S$ is locally embedded in $M$. Hence, we may consider an open subset $V$ of $\Int S$, and a slice chart $(\tilde{U},\varphi)$ of $V$ in $\Int M$. Then, setting $U = \varphi^{-1}(\tilde{U} \cap \HH^n)$, we have that $U$ is a codimension 0 submanifold of $M$ with boundary such that $K|_{\partial U} = 0$.

In either case, we conclude from Lemma \ref{lem:sufficient-case-vanishingonsurface} that $K$ vanishes on an open subset of $\Int M$. Thus, we may conclude in the same way as in the proof of Lemma \ref{lem:sufficient-case-vanishingonsurface} that $K = 0$ on $M$.
\end{proof}

\bibliographystyle{plain}
\bibliography{paper.bib}

\begin{thebibliography}{10}

\bibitem{A}
Vladimir~I. Arnold.
\newblock Sur la topologie des ecoulements stationaries des fluides parfaits.
\newblock {\em CR Acad. Sci Paris A}, 261:17--20, 1965.

\bibitem{AK}
Vladimir~I. Arnold and Boris~A. Khesin.
\newblock {\em Topological Methods in Hydrodynamics}, volume 125 of {\em
  Applied Mathematical Sciences}.
\newblock Springer, 1998.

\bibitem{BL}
Oscar~P. Bruno and Peter Laurence.
\newblock Existence of three-dimensional toroidal mhd equilibria with
  nonconstant pressure.
\newblock {\em Communications on Pure and Applied Mathematics},
  49(7):717–764, July 1996.

\bibitem{B}
Joshua~W. Burby, Nikos Kallinikos, and Robert~S. MacKay.
\newblock Some mathematics for quasi-symmetry.
\newblock {\em Journal of Mathematical Physics}, 61(9):093503, 2020.

\bibitem{C}
Robert Cardona.
\newblock The topology of {B}ott integrable fluids.
\newblock {\em Discrete and Continuous Dynamical Systems}, 42(9):4321--4345,
  2022.

\bibitem{Ce}
Antoine~J. Cerfon and Jeffrey~P. Freidberg.
\newblock ``{{One}} size fits all'' analytic solutions to the
  {{Grad}}{\textendash}{{Shafranov}} equation.
\newblock {\em Physics of Plasmas}, 17(3):032502, 2010.

\bibitem{CH}
Shiing-Shen Chern and Richard~S. Hamilton.
\newblock On {R}iemannian metrics adapted to three-dimensional contact
  manifolds.
\newblock In Friedrich Hirzebruch, Joachim Schwermer, and Silke Suter, editors,
  {\em Arbeitstagung Bonn 1984}, page 279–308, Berlin, Heidelberg, 1985.
  Springer Berlin Heidelberg.

\bibitem{CDG2}
Peter Constantin, Theodore~D. Drivas, and Daniel Ginsberg.
\newblock Flexibility and {R}igidity in {S}teady {F}luid {M}otion.
\newblock {\em Communications in Mathematical Physics}, 385(1):521–563, 2021.

\bibitem{ELP}
Alberto Enciso, Alejandro Luque, and Daniel Peralta-Salas.
\newblock {MHD} equilibria with nonconstant pressure in nondegenerate toroidal
  domains.
\newblock {\em Journal of the European Mathematical Society}, 2023.
\newblock Published online first.

\bibitem{EG}
John Etnyre and Robert Ghrist.
\newblock Contact topology and hydrodynamics: {{I}}. {{Beltrami}} fields and
  the {{Seifert}} conjecture.
\newblock {\em Nonlinearity}, 13(2):441--458, 2000.

\bibitem{Ger}
Wadim Gerner.
\newblock Typical field lines of {B}eltrami flows and boundary field line
  behaviour of {B}eltrami flows on simply connected, compact, smooth manifolds
  with boundary.
\newblock {\em Annals of Global Analysis and Geometry}, 60(1):65--82, 2021.

\bibitem{G}
Harold Grad.
\newblock Toroidal {{Containment}} of a {{Plasma}}.
\newblock {\em The Physics of Fluids}, 10(1):137--154, 1967.

\bibitem{GP}
Victor Guillemin and Alan Pollack.
\newblock {\em Differential {{Topology}}}, volume 370 of {\em {{AMS Chelsea
  Publishing}}}.
\newblock {American Mathematical Society}, 1974.

\bibitem{H}
Per Helander.
\newblock Theory of plasma confinement in non-axisymmetric magnetic fields.
\newblock {\em Reports on Progress in Physics}, 77(8):087001, 2014.

\bibitem{KKPS}
Boris Khesin, Sergei Kuksin, and Daniel Peralta-Salas.
\newblock {KAM} theory and the {3D} {E}uler equation.
\newblock {\em Advances in Mathematics}, 267:498--522, 2014.

\bibitem{L}
John~M. Lee.
\newblock {\em Introduction to {{Smooth Manifolds}}}, volume 218 of {\em
  Graduate {{Texts}} in {{Mathematics}}}.
\newblock {Springer}, 2012.

\bibitem{Lima}
Elon~L. Lima.
\newblock The {{Jordan-Brouwer Separation Theorem}} for {{Smooth
  Hypersurfaces}}.
\newblock {\em The American Mathematical Monthly}, 95(1):39--42, 1988.

\bibitem{Lo}
Dietrich Lortz.
\newblock {{\"U}ber die {E}xistenz toroidaler magnetohydrostatischer
  {G}leichgewichte ohne rotationstransformation}.
\newblock {\em Zeitschrift f{\"u}r angewandte Mathematik und Physik ZAMP},
  21(2):196--211, 1970.

\bibitem{Per}
Daniel Peralta-Salas.
\newblock Selected topics on the topology of ideal fluid flows.
\newblock {\em International Journal of Geometric Methods in Modern Physics},
  13(Supp. 1):1630012, 2016.

\bibitem{PRT}
Daniel {Peralta-Salas}, Ana Rechtman, and Francisco Torres~De Lizaur.
\newblock A characterization of {{3D}} steady {{Euler}} flows using commuting
  zero-flux homologies.
\newblock {\em Ergodic Theory and Dynamical Systems}, 41(7):2166--2181, 2021.

\bibitem{PDP}
David Perrella, Nathan Duignan, and David Pfefferl{\'e}.
\newblock Existence of global symmetries of divergence-free fields with first
  integrals.
\newblock {\em Journal of Mathematical Physics}, 64(5):052705, 2023.

\bibitem{PPS}
David Perrella, David Pfefferl{\'e}, and Luchezar Stoyanov.
\newblock Rectifiability of divergence-free fields along invariant 2-tori.
\newblock {\em Partial Differential Equations and Applications}, 3(4):50, 2022.

\bibitem{P}
Peter Petersen.
\newblock {\em Riemannian {{Geometry}}}, volume 171 of {\em Graduate {{Texts}}
  in {{Mathematics}}}.
\newblock {Springer New York}, 2006.

\bibitem{S}
G{\"u}nter Schwarz.
\newblock {\em Hodge {{Decomposition}}\textemdash{{A Method}} for {{Solving
  Boundary Value Problems}}}, volume 1607 of {\em Lecture {{Notes}} in
  {{Mathematics}}}.
\newblock {Springer}, 1995.

\bibitem{Sul}
Dennis Sullivan.
\newblock A foliation of geodesics is characterized by having no “tangent
  homologies”.
\newblock {\em Journal of Pure and Applied Algebra}, 13(1):101--104, 1978.

\bibitem{W}
Edward Witten.
\newblock Supersymmetry and {M}orse theory.
\newblock {\em Journal of Differential Geometry}, 17(4):661--692, 1982.

\end{thebibliography}

\end{document}